\numberwithin{equation}{section}
\numberwithin{equation}{section}
\DeclareMathOperator{\erf}{erf}
\DeclareMathOperator{\erfc}{erfc}
\def\Real{\mathrm{Re}}
\newtheorem{theorem}{Theorem}%[section]
\newtheorem{lemma}[theorem]{Lemma}%[section]
\newtheorem{proposition}[theorem]{Proposition}%[section]
\newtheorem{assumption}[theorem]{Assumption}%[section]
\newtheorem{corollary}[theorem]{Corollary}%[section]
\numberwithin{equation}{section}
\begin{document}

\title{Error estimate of the u-series method for molecular dynamics simulations}

\author[1,2]{Jiuyang Liang\thanks{liangjiuyang@sjtu.edu.cn}}
\author[1,3]{Zhenli Xu\thanks{xuzl@sjtu.edu.cn}}
\author[1]{Qi Zhou\thanks{zhouqi1729@sjtu.edu.cn}}

\affil[1]{School of Mathematical Sciences, Shanghai Jiao Tong University, Shanghai, 200240, P. R. China}

\affil[2]{Center for Computational Mathematics, Flatiron Institute, Simons Foundation, New York, 10010, USA}

\affil[3]{MOE-LSC, CMA-Shanghai and Shanghai Center for Applied Mathematics,  Shanghai Jiao Tong University, Shanghai, 200240, P. R. China}

\date{}
\maketitle

\begin{abstract}
This paper provides an error estimate for the u-series method of the Coulomb interaction in molecular dynamics simulations. We show that the number of truncated Gaussians $M$ in the u-series and the base of interpolation nodes $b$ in the bilateral serial approximation are two key parameters for the algorithm accuracy, and that the errors converge as $\mathcal{O}(b^{-M})$ for the energy and $\mathcal{O}(b^{-3M})$ for the force. Error bounds due to numerical quadrature and cutoff in both the electrostatic energy and forces are obtained. Closed-form formulae are also provided, which are useful in the parameter setup for simulations under a given accuracy. The results are verified by analyzing the errors of two practical systems.

{\bf Keywords:} Molecular dynamics simulations, Electrostatic interactions, Sum of Gaussians, Convergence rate.

{\bf AMS subject classifications}.  	
41A25, %(alternative) Rate of convergence, degree of approximation
82M37, %   	Computational molecular dynamics in statistical mechanics
68W40, %  	Analysis of algorithms
42A38  % 	Fourier and Fourier-Stieltjes transforms and other transforms of Fourier type
\end{abstract}

\section{Introduction}

Molecular dynamics (MD) simulation is one of popular numerical tools in many-body particle systems and has been applied to various  areas such as biophysics, chemical physics and soft materials \cite{karplus1990molecular, french2010long,Scott2018Neuron,Allen2017ComputerLiquids}. By solving the Newton's equation for each particle that interacts with all other particles, MD is able to capture kinetic and thermodynamic quantities of the system by the ensemble average of particle configurations \cite{Frenkel2001Understanding}. MD with the interacting force field is a simple model whereas it has a dramatically expensive computational cost which is far beyond the computing power of a common personal computer for a typical simulation with mediate system size. The computational bottleneck in MD is the long-range nature of electrostatic interactions as well as its scalability in parallel computing. Popularly used methods are based on Ewald decomposition of Coulomb kernel $1/r$ \cite{Ewald1921AnnPhys}, such that the long-range component can be handled in Fourier space either by lattice summation methods such as the particle mesh Ewald \cite{Darden1993JCP} and particle-particle particle-mesh Ewald \cite{Hockney1988Computer}, or the random batch Ewald method \cite{Jin2020SISC}. The lattice summation methods have $\mathcal{O}(N\log N)$ complexity due to the use of fast Fourier transform (FFT) acceleration \cite{cooley1965algorithm}, and the random batch Ewald achieves $\mathcal{O}(N)$ complexity due to the mini-batch idea for the discrete Fourier transform. Other feasible methods include the treecode algorithm \cite{Barnes1986Nature}, the fast multipole method \cite{greengard1987fast}, 
the multigrid method \cite{trottenberg2000multigrid}, and the Maxwell-equation MD method \cite{maggs2002local}. Most of these methods have intensive global communications which causes low parallel scalability in large-scale simulations \cite{Arnold2013PRE}, and the algorithm development of electrostatics remains an attracting field.

The u-series method \cite{DEShaw2020JCP} is a recently developed algorithm for electrostatic calculations, which has been successfully implemented into Anton 3,
one of the most famous supercomputers specially designed for MD \cite{shaw2021anton}. Compared to the Ewald decomposition, the u-series gives
a far-field sum-of-Gaussians (SOG) approximation to Coulomb potential, which has many advantages. First, the SOG decomposition is able to conserve
the smoothness of any order at the cutoff point, providing higher accuracy under the same cost. Second, the separability of the SOG
is beneficial to save half of the sequential communication rounds for the FFT use. Finally, the algorithm achieves a trade-off among communication latency,
communication bandwidth and computation cost, which is advantageous for high precision parallel computation.
The u-series algorithm has been successfully applied to simulations of many popular problems such as the acceleration of the Covid-19 research \cite{malone2021structural,chen2022ensemble,padhi2021accelerating}. The extension of the u-series to the random batch SOG method was also reported recently~\cite{Liang2023SISC}, which accelerates the long-range calculation in the Fourier space by importance sampling instead of the use of FFT.

The u-series algorithm is practically effective and accurate, but it was pointed out \cite{DEShaw2020JCP} that a rigorous estimate of the convergence speed
due to the near-field cutoff remains unsolved. This results in a difficulty in the optimal parameter choice for different scales of systems for a given tolerance in MD simulations.
This paper fixes this gap and develops a rigorous error estimate on the electrostatic energy and forces for the u-series decomposition.
We achieve the proof of the error estimate by developing an approach of estimating
each Fourier mode of the near-field error.
We show that the decay rate is controlled by $\mathcal{O}(b^{-M})$ for the total energy and $\mathcal{O}(b^{-3M})$ for the forces,
where $b$ is the base of interpolation nodes for the SOG decomposition and $M$ is the number of Gaussians. The use of larger $b$ leads to the increase of the lower bound of the error. Moreover, the closed-form formulae for the approximate error are obtained.
Our error estimate is promising to offer a criterion of parameter setup in practical MD simulations.

The rest part of the paper is organized as follows. In Section \ref{useries}, we overview the u-series decomposition and present the main results on the near-field error estimation of $C^0$-continuous u-series. Section~\ref{proof} gives the detailed proof for the theorem in the main results. In Section~\ref{subsec::extension}, we extend the estimate to $C^1$-continuous u-series and discuss conditional convergence of u-series-based calculations. Section \ref{sec::closed} develops closed-form formulae of cutoff errors of energy and force for the use  parameter optimization. Section \ref{example} contains numerical validation on the performance of apriori parameter selection by using the theoretical results. Conclusions are made in Section \ref{conclusion}.

\section{Error estimate of the u-series decomposition}
\label{useries}

\subsection{U-series decomposition for Coulomb kernel}\label{subsec::u-series}
Consider a charged system of $N$ particles inside a cubic domain $\Omega$ with length $L$. The locations and charges of these particles are  $\{\bm{r}_i, q_i\}$, $i=1,2,\cdots,N$. Suppose that the domain is specified with 3D periodic boundary condition, that is, $\Omega$ as well as particles in it are replicated in all three directions in order to mimic the bulk environment. Assume that the system has charge neutrality condition $\sum_{i=1}^{N}q_i=0$. Then, the electrostatic potential on the $i$th particle is expressed as
\begin{equation}\label{solutionperiodic}
\Phi_i=\sum_{j=1}^{N}\sum_{\bm{n}\in\mathbb{Z}^{3}}~ '~\dfrac{q_j}{\left|\bm{r}_j-\bm{r}_i+\bm{n}L\right|},
\end{equation}
where the prime of the summation represents the ignoration of the case $i=j$ and $\bm{n}=\bm{0}$. The total energy of this system is given by
\begin{equation}
U=\frac{1}{2}\sum_{i=1}^{N}q_i\Phi_i,
\end{equation}
where the appearance of coefficient $1/2$ is to eliminate the double calculation of interactions. Furthermore, one can take its negative derivative with respect to location vector $\bm{r}_i$ to obtain the corresponding interaction force on the $i$th particle.

The calculation of series \eqref{solutionperiodic} is one of the most significant bottlenecks in MD simulation. This series converges conditionally \cite{Frenkel2001Understanding} since the Coulomb kernel $1/r$ has the long-range nature and a direct cutoff scheme is less accurate. Moreover, the $1/r$ kernel has a singularity at the origin, making it difficult to apply analytical methods such as Fourier transform for fast calculation.
The classical Ewald splitting \cite{Ewald1921AnnPhys} solves the problem by decomposing the Coulomb kernel into two parts: one is the near-field part which is fast decaying but with a singularity, and the other one is the smooth far-field part with long-range nature, namely,
\begin{equation}\label{splitting}
\dfrac{1}{r} =\frac{\erfc(\alpha r)}{r}+\frac{\erf(\alpha r)}{r},
\end{equation}
where $\erf(\cdot)$ is the error function, $\erfc(\cdot)$ is its complementary, and $\alpha$ is a positive parameter. For the near part, a cutoff radius $r_c$ is introduced and the influence of all particle pairs beyond this distance can be ignored. For the far part, it is smooth and varies gently, and decays rapidly in Fourier space (or $k$-space). Therefore, one can utilize some efficient techniques such as FFT and sampling methods to deal with this component.

The u-series approximation, however, is derived from a special SOG decomposition \cite{greengard2018anisotropic,exl2016accurate}. To describe the algorithm, we follow the work of Predescu {\it et al.} \cite{DEShaw2020JCP} and start from the integral identity
\begin{equation}\label{GammaExpansion}
\dfrac{1}{r^{2\beta}}=\dfrac{1}{\Gamma(\beta)}\int_{-\infty}^{\infty}e^{-e^tr^2+\beta t}dt,
\end{equation}
where $\Gamma(\cdot)$ represents the Gamma function. For the Coulomb kernel, one has $\beta=1/2$. Discretizing Eq.~\eqref{GammaExpansion} using the trapezoidal rule with uniformly spaced nodes $x_{\ell}=t_0+\ell h $, where $t_0=-\log 2\sigma^2$ and $h=2\log b$, yields a bilateral series approximation~\cite{beylkin2005approximation,beylkin2010approximation} of the Coulomb kernel, expressed as follows:
%If one makes a change of variable $t=\log(x^2/2\sigma^2)$ and performs an integral discretization with geometrically spaced nodes $x_\ell=b^{-\ell}$, one obtains an bilateral series approximation \cite{beylkin2005approximation,beylkin2010approximation} of the Coulomb kernel, which reads
\begin{equation}\label{BSA}
\dfrac{1}{r}\approx\dfrac{2\log b}{\sqrt{2\pi\sigma^2}}\sum_{\ell=-\infty}^{\infty}\dfrac{1}{b^{\ell}}\exp\left[-\frac{1}{2}\left(\frac{r}{b^{\ell}\sigma}\right)^2\right].
\end{equation}
Clearly, it is a series of Gaussians. Here $b>1$ is the base of interpolation nodes, and $\sigma$ dominates the bandwidth of each Gaussian. Under the $b\rightarrow 1$ limit, one obtains the asymptotic error bound of the approximation
\begin{equation}\label{eq::pointwiseerror}
\left|1-\dfrac{2r\log b}{\sqrt{2\pi\sigma^2}}\sum_{\ell=-\infty}^{\infty}\dfrac{1}{b^{\ell}}\exp\left[-\frac{1}{2}\left(\frac{r}{b^{\ell}\sigma}\right)^2\right]\right|\lesssim 2\sqrt{2}\exp\left(-\dfrac{\pi^2}{2\log b}\right)
\end{equation}
for all $r>0$.

The so-named u-series \cite{DEShaw2020JCP} is due to the uniform error bound given by Eq.~\eqref{eq::pointwiseerror}. It modifies the bilateral series approximation such that the Coulomb kernel is decomposed as $1/r=\mathcal{F}_{b}^{\sigma}(r)+\mathcal{N}_{b}^{\sigma}(r)$, where the far part includes the long-range components of $\ell\geq0$ with ignorance of $\ell>M$ terms,  expressed by
\begin{equation}\label{eq::SOGField}
\mathcal{F}_{b}^{\sigma}(r)=\sum_{{\ell}=0}^{M}w_{\ell} e^{-r^2/s_{{\ell}}^2}
\end{equation}
with coefficients $
w_{\ell}=(\pi/2)^{-1/2}b^{-\ell}\sigma^{-1}\log b, \text{ and } s_{\ell}=\sqrt{2}b^{\ell}\sigma.	
$
For the near-field interaction, the complementary of the far part with radius cutoff at $r=r_c$ is introduced, that is,
\begin{equation}\label{eq::SOGDEcomp}
\mathcal{N}_{b}^{\sigma}(r)=\begin{cases}
1/r-\mathcal{F}_{b}^{\sigma}(r),\quad\text{if}~r<r_c\\\\0,\qquad\qquad\quad\,\,\, \text{if}~r\geq r_c.\end{cases}\quad\,
\end{equation}
In order to maintain the continuity of the near-field interaction $\mathcal{N}_{b}^{\sigma}(r)$, the cutoff radius $r_c$ should be set as the smallest root of $C^0$-continuous equation 
\begin{equation}
\label{eq::C0}
    r\mathcal{F}_{b}^{\sigma}(r)-1=0.
\end{equation} 
The smoothness of the u-series at $r_c$ can be improved by finely tuning parameters $w_{\ell}$ and $s_{\ell}$ of the far part. For instance, a $C^1$-continuity can be achieved by varying the coefficient of the narrowest Gaussian as $w_0\rightarrow \omega w_0$
% \begin{equation}
%     w_{0,C^1}=\omega (\pi/2)^{-1/2}\sigma^{-1}\log b,
% \end{equation}
with $\omega$ being a parameter, such that $r_c$ and $\omega$ are determined by satisfying both the $C^0$-continuity condition (Eq.~\eqref{eq::C0}) and the $C^1$-continuity condition
\begin{equation}
\label{eq::C1}
-\frac{1}{r_c^2}-\frac{d}{dr}\mathcal{F}_{b}^{\sigma}(r){\Big{|}}_{r=r_c}=0.
\end{equation}
Adjusting the parameters defining the narrowest Gaussian is necessary to prevent significant errors beyond the cutoff, as is suggested in~\cite{DEShaw2020JCP}.

% \tcr{To ensure the approximation accuracy of various physical quantities, the u-series can achieve higher-order smoothness at the truncation radius $r_c$ by adjusting the parameters $w_\ell$ and $s_\ell$ for each Gaussian. Typically, a $C^1$ approximation is employed to ensure the accuracy of forces, which can be achieved by adding a correction factor $\omega$ before the first Gaussian, that is,
% \begin{equation}
%     w_0=\omega (\pi/2)^{-1/2}\sigma^{-1}\log b,
% \end{equation}
% and then solving both $C^0$-continuous equation \eqref{eq::C0} and $C^1$-continuous equation
% \begin{equation}
% \label{eq::C1}
%     -\frac{1}{r_c^2}-\partial_r\mathcal{F}_{b}^{\sigma}(r_c)=0
% \end{equation}
% to obtain $\omega$ and $r_c$.
% }

The advantages of such a u-series decomposition are twofold. First, the potential is exact up to the cutoff radius and it is continuous at the cutoff point. Second, a Gaussian function is separable, i.e., it can be written as the product of one-dimensional functions. The separability of the Gaussian can be used to save half of the sequential communication rounds for the FFT acceleration~\cite{DEShaw2020JCP}. Due to these nice features, the u-series can produce the accuracy of the Ewald decomposition with a reduced computational effort, attracting much attention in the field of MD simulations.

\subsection{Main results of the convergence rate}\label{subsec::mainresult}
We study the contribution of near-field interactions beyond the cutoff distance in the u-series decomposition, obtaining the error estimate in the energy/force calculations. This section focuses on the $C^0$-continuous case and provides the main results. The detailed proof will be provided in Section~\ref{proof}. Extensions to $C^{1}$-continuous and higher continuity cases are discussed in Section~\ref{subsec::extension}.

By the u-series decomposition of the Coulomb kernel, the potential given by Eq.~\eqref{solutionperiodic} is approximated via 
\begin{equation}
\Phi_{i}\approx\Phi_{i,\text{u-series}}=\sum_{j=1}^{N}\sum_{\bm{n}\in\mathbb{Z}^{3}}~ '~q_j{\big[}\mathcal{F}_{b}^{\sigma}(\left|\bm{r}_j-\bm{r}_i+\bm{n}L\right|)+\mathcal{N}_{b}^{\sigma}(\left|\bm{r}_j-\bm{r}_i+\bm{n}L\right|){\big]}.
\end{equation}
%By Eqs.~\eqref{eq::SOGField} and \eqref{eq::SOGDEcomp}, 
One can write the error in the form as
\begin{equation}
\label{eq::error}
\Phi_{i,\text{err}}=\Phi_i-\Phi_{i,\text{u-series}}=\sum_{j=1}^{N}\sum_{\bm{n}\in \mathbb{Z}^3}q_jK(|\bm{r}_j-\bm{r}_i+\bm{n}L|),
\end{equation}
where the kernel function is
\begin{equation} \label{kernel:K}
K(r)=\left(\frac{1}{r}-\sum_{\ell=0}^{M}w_\ell e^{-r^2/s_\ell^2}\right) H(r-r_c)
\end{equation}
and $H(x)$ denotes the Heaviside step function and one has $H(x)=1$ for $x\geq0$ and $0$ otherwise. The errors in electrostatic energy and force can be then written by
\begin{equation} \label{4.2eq::Uerrferr}
U_{\text{err}}=\dfrac{1}{2}\sum_{i=1}^{N}q_i\Phi_{i,\text{err}},\quad \text{and}\quad\bm{F}_{\text{err}}(\bm{r}_i)=-\dfrac{\partial U_{\text{err}}}{\partial \bm{r}_i},
\end{equation}
respectively. The main objective of this paper is to estimate errors of both $U_{\text{err}}$ and $\bm{F}_{\text{err}}(\bm{r}_i)$.

The error estimate for Ewald-type methods have been well established \cite{kolafa1992cutoff,deserno1998mesh,wang2001estimate,wang2010optimizing}, providing a profound understanding of the accuracy of force calculation and introducing parameter tuning algorithms that boost the efficiency of the computation with a good control of the error. Unfortunately,  techniques of these works cannot be directly applied for estimating the error by the u-series method. This is mainly because that the long-range Coulomb kernel $1/r$ is still involved in $K(r)$ and its pairwise sum is only conditionally convergent. As a result, the discrete series $\Phi_i$ cannot be safely approximated by the corresponding continuous integral under the homogeneity assumption (i.e., the charges are distributed randomly for $r>r_c$). 

Before presenting the main results, one introduces Assumption \ref{ass:1} as an apriori knowledge of the relationship between the bandwidths of the SOG expansion and other parameters.

\begin{assumption}\label{ass:1}
	The bandwidth of the $(M+1)$th Gaussian in the bilateral series approximation is assumed to be much larger than the box size; i.e., one has the condition $s_{M+1}\gg L\geq 2r_c$. Here $L\geq 2r_c$ is due to the minimum image convention. Moreover, the bandwidth of the $-1$st Gaussian is assumed to be less than the cutoff radius, i.e.,  $s_{-1}<r_{c}$.
\end{assumption}

Assumption \ref{ass:1} is a natural setup for practical systems. Actually, if $s_{M+1}$ is small such that it is comparable to $L$, then the numerical error will be big due to the significant truncated Gaussians and cannot be ignored. Similarly, the error due to the neglection of the $(-1)$-st Gaussian will become at least $O(1)$ if  $s_{-1}>r_{c}$. Under Assumption~\ref{ass:1}, we have Theorem \ref{thm::thm4}, which claims that both $U_{\text{err}}$ and $\bm{F}_{\text{err}}(\bm{r}_i)$ have a uniform bound with the increase of the u-series truncation term $M$.

\begin{theorem}\label{thm::thm4}
For $C^0$-continuous u-series, $U_{\emph{err}}$ and $\bm{F}_{\emph{err}}(\bm{r}_i)$ hold the following estimates,
	\begin{equation}\label{eq::errorfinal}
	\begin{split}
	|U_{\emph{err}}|&\simeq O\left((\log b)^{-3/2}e^{-\pi^2/2\log b}+b^{-M}+w_{-1}e^{-r_c^2/s_{-1}^2}\right),\\
	|\bm{F}_{\emph{err}}(\bm{r}_i)|&\simeq O\left((\log b)^{-3/2}e^{-\pi^2/2\log b}+b^{-3M}+w_{-1}(s_{-1})^{-2}e^{-r_c^2/s_{-1}^2}\right),
	\end{split}
	\end{equation}
	where $\simeq$ indicates ``asymptotically equal'' as $b\rightarrow 1$.
\end{theorem}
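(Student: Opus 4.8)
The plan is to analyze the error kernel $K(r)$ in \eqref{kernel:K} through its Fourier content, since the main difficulty — as the authors note — is that $1/r$ persists in $K(r)$, so the pairwise sum cannot be replaced by its continuum integral under a homogeneity assumption. Instead, I would split the error into three conceptually distinct pieces, matching the three terms in \eqref{eq::errorfinal}: (i) the \emph{bilateral series truncation error} coming from discarding the tail $\ell>M$ of the full two-sided Gaussian series — this should be controlled by the residual of the infinite Gaussian sum, which by \eqref{eq::pointwiseerror} contributes the $(\log b)^{-3/2}e^{-\pi^2/2\log b}$ term once one tracks the missing $2r\log b/\sqrt{2\pi\sigma^2}$ normalization and the decay in $r$; (ii) the \emph{head truncation error} from discarding the $\ell<0$ Gaussians, dominated by the $\ell=-1$ term whose contribution beyond $r_c$ is $w_{-1}e^{-r_c^2/s_{-1}^2}$ (for the force, an extra $s_{-1}^{-2}$ from differentiation), legitimized by Assumption \ref{ass:1} which guarantees $s_{-1}<r_c$ so this term is genuinely subdominant and the remaining $\ell\le -2$ Gaussians are super-exponentially smaller; and (iii) the genuine \emph{cutoff error}, i.e. the difference between the periodic lattice sum of $1/r - (\text{full bilateral series})$ restricted to $r\ge r_c$ and zero, which is where the $b^{-M}$ (resp. $b^{-3M}$) rate must come from.

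For step (iii), the key technical move is to pass to Fourier space on the torus. Write $\Phi_{i,\text{err}} = \sum_j q_j \widetilde{K}(\bm{r}_j - \bm{r}_i)$ where $\widetilde K$ is the periodization of $K$, and expand $\widetilde K$ in a Fourier series $\sum_{\bm k} \hat K(\bm k) e^{i \bm k \cdot \bm r}$ over $\bm k \in (2\pi/L)\mathbb{Z}^3$. Then $U_{\text{err}} = \tfrac{1}{2}\sum_{\bm k} \hat K(\bm k) |\hat\rho(\bm k)|^2$ with $\hat\rho$ the charge structure factor, and by charge neutrality the $\bm k = \bm 0$ mode drops out — this is precisely how one sidesteps the conditional convergence. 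The heart of the matter is then a sharp estimate on each $\hat K(\bm k)$: $\hat K(\bm k)$ is the 3D Fourier transform of $\left(1/r - \mathcal F_b^\sigma(r)\right)H(r-r_c)$. Because the smooth sum $\mathcal F_b^\sigma$ is built to match $1/r$ to $C^0$ order at $r_c$ (Eq. \eqref{eq::C0}), the integrand is continuous, and — crucially — the near-field residual $1/r - \mathcal F_b^\sigma(r)$ on $[r_c,\infty)$ is, up to the head/tail errors already accounted for, exactly the infinite bilateral series residual, which one can bound pointwise using \eqref{eq::pointwiseerror} and its derivatives. I would then estimate the radial Fourier integral $\int_{r_c}^\infty \frac{\sin(kr)}{kr}\big(1 - r\mathcal F_b^\sigma(r)\big)\, dr$ by exploiting that $1 - r\mathcal F_b^\sigma(r)$ is of size $b^{-M}$ in the relevant range $r \gtrsim r_c$ (this is where the truncation at $\ell = M$ enters: the dominant uncancelled Gaussian is the $\ell=M$ one, of width $s_M = \sqrt 2 b^M \sigma \gg r_c$, whose value near $r_c$ is $\approx w_M = (\pi/2)^{-1/2} b^{-M}\sigma^{-1}\log b$), and summing the resulting per-mode bounds against $\sum_{\bm k} |\hat\rho(\bm k)|^2 = \sum_i q_i^2$. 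For the force one differentiates in $\bm r_i$, pulling down a factor $\bm k$; the gain from $b^{-M}$ to $b^{-3M}$ should come from the fact that the force error is dominated by the self/near term structure where the relevant quantity is $\nabla$ of a Gaussian of width $s_M$ evaluated where its argument is $O(1)$, giving three extra powers of $1/s_M \sim b^{-M}$ — I would make this precise by the same Fourier-mode bookkeeping, noting the extra $k^2/s_M^2$-type suppression in $\hat K(\bm k)$ for the differentiated kernel.

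The main obstacle I anticipate is making the per-mode bound on $\hat K(\bm k)$ both uniform in $\bm k$ \emph{and} summable, while simultaneously tracking the correct power of $b$. A naive bound $|\hat K(\bm k)| \lesssim \|K\|_{L^1}$ is too lossy and not obviously summable over the lattice; one needs to use the regularity of $K$ (continuity at $r_c$, and the smooth Gaussian tail being Schwartz) to get decay in $\bm k$, yet the $1/r H(r-r_c)$ piece only decays like $|\bm k|^{-2}$ in Fourier space, so the lattice sum $\sum_{\bm k \ne 0} |\bm k|^{-2}$ diverges in 3D — this is the conditional-convergence ghost reappearing. I expect the resolution is that the \emph{difference} $1/r - \mathcal F_b^\sigma$ has much better decay than $1/r$ alone (the low-$r$ singularity and the slow tail are both tamed because the bilateral series represents $1/r$ nearly exactly), so $\hat K(\bm k)$ decays fast enough; but proving this requires carefully splitting the radial integral at scales $r_c$, $s_M$, and $\infty$, integrating by parts in each piece, and bounding the boundary terms at $r_c$ using the $C^0$ condition. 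Controlling the $b\to 1$ asymptotics throughout — since $h = 2\log b \to 0$ makes the series denser and the $e^{-\pi^2/2\log b}$ term is exponentially small in that limit — is the bookkeeping that ties the three error sources together into \eqref{eq::errorfinal}, and getting the constants to line up so that the stated terms are genuinely the dominant ones (not artifacts of loose bounds) is where most of the care will be needed.
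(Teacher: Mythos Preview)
Your three-way decomposition has the right pieces but the attribution of (i) and (iii) is swapped. The quantity in your (iii), the lattice sum of $\bigl(1/r-\text{full bilateral series}\bigr)H(r-r_c)$, carries \emph{no} $M$-dependence at all, so it cannot be the source of the $b^{-M}$ rate; it is the trapezoidal (aliasing) error and is what actually produces $(\log b)^{-3/2}e^{-\pi^2/2\log b}$. Conversely, discarding the tail $\ell>M$ --- your (i) --- is where $b^{-M}$ (energy) and $b^{-3M}$ (force) come from, since $w_M\sim b^{-M}$ and $s_M\sim b^M$. The paper's split is exactly $K=T+G_{\text{up}}+G_{\text{down}}$ with $T=\bigl(1/r-\sum_{\ell\in\mathbb Z}\bigr)H(r-r_c)$, $G_{\text{up}}=\sum_{\ell>M}w_\ell e^{-r^2/s_\ell^2}H(r-r_c)$, $G_{\text{down}}=\sum_{\ell<0}w_\ell e^{-r^2/s_\ell^2}H(r-r_c)$. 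Your claim in the last paragraph that $1/r-\mathcal F_b^\sigma(r)$ ``has much better decay than $1/r$ alone'' is also false: for $r\gg s_M$ the truncated Gaussian sum vanishes and the difference is $\sim 1/r$, which is precisely why the $T/G_{\text{up}}$ split is needed before passing to Fourier.

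More seriously, your plan for the aliasing piece is missing the central technical device. A pointwise bound via \eqref{eq::pointwiseerror} gives only $|T(r)|\lesssim e^{-\pi^2/2\log b}/r$, and the resulting radial Fourier integral neither converges absolutely nor yields the $(\log b)^{-3/2}$ prefactor. What the paper does is apply Poisson summation to the integral identity \eqref{GammaExpansion} to get an \emph{exact} representation
\[
\frac{1}{r}-\sum_{\ell\in\mathbb Z}w_\ell e^{-r^2/s_\ell^2}=\sum_{m\neq 0}\mathcal C(m)\,r^{\alpha_m-1},\qquad \alpha_m=\frac{2m\pi i}{\log b},
\]
then computes the Fourier transform of each $r^{\alpha_m-1}H(r-r_c)$ in closed form via incomplete Gamma functions. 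The factor $e^{-\pi^2/2\log b}$ arises from $|\Gamma(\tfrac12-\tfrac{\alpha_m}{2})|\simeq\sqrt{2}\,e^{-|m|\pi^2/2\log b}$, and the $(\log b)^{-3/2}$ from splitting the $k$-integral at $k=|\alpha_m|/r_c$ and tracking $\mathcal I_m-\mathcal J_m(k)$. The $C^0$ condition at $r_c$ plays essentially no role here; convergence of the lattice sum for $T$ is instead secured by recognizing the leading large-$k$ behavior of $\widetilde T(k)$ as $\cos(kr_c)/k^2$, whose sum against $|\rho(\bm k)|^2$ is the beyond-cutoff Coulomb energy and converges by charge neutrality plus tinfoil boundary.
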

%	|\bm{F}_{\text{err}}(\bm{r}_i)|&\lesssim \left|\sum_{\bm{k}\neq \bm{0}}\frac{4\pi q_i\bm{k}}{Vk^2}\text{Im}\left(e^{-i\bm{k}\cdot\bm{r}_i}\rho(\bm{k})\right)\right|\left[e^{-\pi^2/2\log b}C_1(k)+\frac{1}{b^{M+1}}C_2(k)+\omega_{-1}e^{-r_c^2/s_{-1}^2}C_3(k)\right]
From Theorem \ref{thm::thm4}, it is observed that the decay rate with respect to the number of truncated Gaussians is controlled by $O(1/b^M)$ for the energy and $O(1/b^{3M})$ for the forces. The other two terms in Eq.~\eqref{eq::errorfinal} determine the lower bound on the errors. Intuitively, the first term in Eq.~\eqref{eq::errorfinal} represents the ``aliasing'' error arising from the trapezoidal rule, depending solely on $b$. The second and third terms are resulted from neglecting Gaussians with $\ell>M$ and $\ell<0$, respectively. Note that the bilateral series approximation gives
\begin{equation}\label{eq::pointwise}
    \frac{1}{r}\approx \sum_{\ell=-\infty}^{\infty}w_\ell e^{-r^2/s_\ell^2}\quad \text{and} \quad \nabla_{\bm{r}}\left(\frac{1}{r}\right)=-\frac{\bm{r}}{r^3}\approx 2\bm{r}\sum_{\ell=-\infty}^{\infty}\frac{w_\ell}{s_\ell^2}e^{-r^2/s_\ell^2} \quad \text{as}\quad b\rightarrow 1
\end{equation}
for $r\in(0,\infty)$. Truncating the upper limit of the two sums of Gaussians in Eq.~\eqref{eq::pointwise} at $\ell=M$ will lead to the effective range of approximation being truncated to $(0,r_{\text{max}}]$ with $r_{\text{max}}\sim s_{M}$. Given $w_{M}\sim 1/b^{M}$ and $s_{M}\sim b^M$, the approximation errors of $1/r$ and its gradient around $r=r_{\text{max}}$ are $O(1/b^M)$ and $O(1/b^{3M})$, respectively. Actually, Theorem~\ref{thm::thm4} indicates that these pointwise behaviors will be inherited in the lattice summation, with decay rates of $1/b^{M}$ and $1/b^{3M}$ for energy and forces, respectively. Similarly, truncating the lower limit in Eq.~\eqref{eq::pointwise} at $\ell=0$ leads to the effective range of approximation being further truncated to $[r_{\min},r_{\max}]$ with $r_{\min}\sim s_{-1}<r_c$. The dominant contribution naturally originates from the Gaussian with $\ell=-1$ and neighbors situated around the cutoff distance $r_c$.

Theorem~\ref{thm::thm4} also leads to a simple rule, as presented in Corollary~\ref{coroll:parameter}, to set $b$ and $M$ to guarantee a user-defined decomposition error of force, given its crucial role in MD. The derivation relies on Lemma~\ref{lem::estimate} which is a refinement of Lemma 14 in~\cite{beylkin2010approximation}. The estimate of $U_{\text{err}}$ should also be valuable for applications like Monte Carlo simulations.
\begin{lemma}\label{lem::estimate}
Let $x$ and $\varepsilon$ be two positive numbers. The inequality $x^{3/2}e^{-x}\leq \varepsilon$ holds if 
\begin{equation}
x\geq\left(\frac{3}{2}\log\frac{3}{2}+\log\varepsilon^{-1}\right)\frac{e}{e-1}.
\end{equation}
\end{lemma}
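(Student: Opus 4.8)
The plan is to reduce the inequality to a linear lower bound on $x$ by taking logarithms and then controlling the remaining logarithmic term with a sharp tangent-line estimate. First I would observe that, taking logarithms, the claim $x^{3/2}e^{-x}\le\varepsilon$ is equivalent to
\begin{equation*}
x-\tfrac32\log x\ \ge\ \log\varepsilon^{-1},
\end{equation*}
so it suffices to produce a lower bound for $x-\tfrac32\log x$ that is affine in $x$ with the slope and intercept appearing in the statement.

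The key step is the elementary inequality $\log t\le t/e$, valid for every $t>0$ with equality at $t=e$ (it follows at once by minimizing $t/e-\log t$). Applying it to $t=2x/3$ gives
\begin{equation*}
\tfrac32\log x=\tfrac32\log\tfrac32+\tfrac32\log\tfrac{2x}{3}\le\tfrac32\log\tfrac32+\frac{x}{e},
\end{equation*}
and hence
\begin{equation*}
x-\tfrac32\log x\ \ge\ \frac{e-1}{e}\,x-\tfrac32\log\tfrac32.
\end{equation*}
The particular rescaling $t=2x/3$ is what makes the tangent-line bound tight near $x=3e/2$ and produces exactly the constants in the statement; identifying this scaling is the only nonroutine choice in the argument, and it is the natural place for the proof to go wrong if one is not careful.

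Combining the two displays, the target inequality $x-\tfrac32\log x\ge\log\varepsilon^{-1}$ holds as soon as $\tfrac{e-1}{e}x-\tfrac32\log\tfrac32\ge\log\varepsilon^{-1}$, that is, as soon as $x\ge\bigl(\tfrac32\log\tfrac32+\log\varepsilon^{-1}\bigr)\tfrac{e}{e-1}$, which is precisely the hypothesis. Every implication above only weakens the left-hand side of the inequality being proved, so no converse statement or case analysis is needed; beyond fixing the correct scaling in the tangent-line estimate there is essentially no obstacle.
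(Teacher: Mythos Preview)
Your proof is correct. The paper does not actually supply a proof of this lemma; it merely states the result and notes that it is a refinement of Lemma~14 in Beylkin--Monz\'on (2010), so there is no in-paper argument to compare against. Your approach---reducing to $x-\tfrac32\log x\ge\log\varepsilon^{-1}$ and then linearizing the logarithm via the tangent-line bound $\log t\le t/e$ applied at $t=2x/3$---is exactly the natural and standard way to obtain such explicit thresholds, and it recovers the constants in the statement on the nose.
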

\begin{corollary}\label{coroll:parameter}
Suppose that an error tolerance $\varepsilon>0$ is given. By Lemma~\ref{lem::estimate}, one can set 
\begin{equation}
b\sim e^{\pi^2(e-1)/(2ep)}\quad\text{with}\quad p=\frac{3}{2}\log\frac{3}{2}+\log(3\pi^3)+\log (2^{-3/2}\varepsilon^{-1}),
\end{equation}
so that the aliasing error $\sim\varepsilon/3$. In addition, find proper $r_c\sim s_{-1}\sqrt{\log (3w_{-1}s_{-1}^2)+\log\varepsilon^{-1}}$ and $M\sim (3\log b)^{-1}\log (3\varepsilon^{-1})$, so that the error for dropping $\ell>M$ and $\ell<0$ Gaussians are both $\sim \varepsilon/3$. The error of $C^0$-continuous u-series is thus $\sim \varepsilon$ uniformly over all $r\in \Omega$. %$\sigma$ so that the solution of Eq.~\eqref{eq::C0} satisfies 
\end{corollary}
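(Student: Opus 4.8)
The plan is to prove Corollary~\ref{coroll:parameter} by splitting the tolerance $\varepsilon$ into three equal pieces, one for each term on the right-hand side of the force estimate in Theorem~\ref{thm::thm4}: the aliasing term $T_1=(\log b)^{-3/2}e^{-\pi^2/(2\log b)}$, the high-mode truncation term $T_2=b^{-3M}$, and the boundary term $T_3=w_{-1}s_{-1}^{-2}e^{-r_c^2/s_{-1}^2}$. These are governed by essentially disjoint parameters — $T_1$ by $b$ alone, $T_2$ by $M$ once $b$ is fixed, and $T_3$ by $r_c$ once $b$ and $\sigma$ are fixed — so it suffices to force each of $T_1,T_2,T_3$ below $\varepsilon/3$; then the asymptotic bound of Theorem~\ref{thm::thm4} gives a total force error $\sim\varepsilon$. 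The same recipe applies to $U_{\text{err}}$ after replacing $T_2$ by $b^{-M}$ and deleting the $s_{-1}^{-2}$ factor in $T_3$, which is why the estimate is said to be useful for Monte Carlo simulations as well.

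For $T_1$ I would substitute $x=\pi^2/(2\log b)$, which turns $T_1$ into $(2/\pi^2)^{3/2}x^{3/2}e^{-x}$ — exactly the profile controlled by Lemma~\ref{lem::estimate}. Applying that lemma with tolerance $(\pi^2/2)^{3/2}(\varepsilon/3)$ on $x^{3/2}e^{-x}$ gives the sufficient condition $x\ge pe/(e-1)$, where $p$ is obtained by collecting the constants $\tfrac32\log\tfrac32$, $\log(3\pi^3)$ and $\log(2^{-3/2}\varepsilon^{-1})$; inverting $x=\pi^2/(2\log b)$ (large $x$ means small $\log b$) then yields $b\sim e^{\pi^2(e-1)/(2ep)}$. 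For $T_2$, the requirement $b^{-3M}\le\varepsilon/3$ is, on taking logarithms, $M\ge\log(3\varepsilon^{-1})/(3\log b)$, which is the claimed choice of $M$. For $T_3$, imposing $w_{-1}s_{-1}^{-2}e^{-r_c^2/s_{-1}^2}\le\varepsilon/3$ and solving for $r_c$ by logarithms gives $r_c\sim s_{-1}\sqrt{\log(3w_{-1}s_{-1}^2)+\log\varepsilon^{-1}}$; in particular $r_c$ exceeds $s_{-1}$ only by a $\sqrt{\log}$ factor, which is consistent with $s_{-1}<r_c$ in Assumption~\ref{ass:1}.

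The last thing to check is self-consistency. As $\varepsilon\to0$ one has $p\to\infty$, so $b=e^{\pi^2(e-1)/(2ep)}\to 1^{+}$, which is precisely the $b\to1$ regime in which Theorem~\ref{thm::thm4} and the pointwise bound~\eqref{eq::pointwiseerror} hold; the associated $M$, rounded up, is a positive integer, and since $r_c$ grows only like $\sqrt{\log\varepsilon^{-1}}$ one can still arrange $L\ge 2r_c$ and $s_{M+1}\gg L$ as demanded by Assumption~\ref{ass:1}. I do not expect a genuinely hard step: the only ingredient beyond elementary manipulation of logarithms is Lemma~\ref{lem::estimate}, invoked exactly because the aliasing term carries the $x^{3/2}$ prefactor rather than being a bare exponential; the real care is bookkeeping the multiplicative constants that enter $p$ and verifying that the three parameter windows are simultaneously compatible with one another and with the standing assumptions.
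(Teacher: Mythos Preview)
Your proposal is correct and follows essentially the same approach as the paper. The paper does not give a standalone proof of this corollary; the reasoning is embedded in the statement itself (``By Lemma~\ref{lem::estimate}, one can set \ldots''), and your write-up simply makes explicit the three-way splitting of the tolerance, the substitution $x=\pi^2/(2\log b)$ that reduces the aliasing term to the profile of Lemma~\ref{lem::estimate}, and the logarithmic inversions for $M$ and $r_c$ --- exactly what the paper intends.
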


It is important to note that Theorem~\ref{thm::thm4} specifically shows the convergence rate of both energy and force. Thus, Corollary~\ref{coroll:parameter} provides a practical strategy. However, finely optimizing pertinent parameters in u-series MD simulations remains challenging due to the significant computational overhead associated with prefactors.
 
\section{Proof of the main results} \label{proof}
In this subsection, we present a detailed proof of Theorem~\ref{thm::thm4}. It is remarked that we do not use any mean-field assumptions in the proof. 

\subsection{Fourier spectral expansion and kernel splitting}
We consider to estimate the errors by the Fourier method. As usual, the tinfoil boundary condition is specified for $r\rightarrow \infty$ and thus the zero-frequency term is neglected~\cite{DEShaw2020JCP,hu2014infinite}. This allows us to seek for the bound of $\phi_{\text{err}}$ in Fourier space. Discussions on other infinite boundary conditions are provided in Section~\ref{subsec::extension}. Let us define the Fourier transform pairs as
\begin{equation}\label{eq::Fouriers}
\widetilde{f}(\boldsymbol{k}):=\int_{\Omega} f(\boldsymbol{r}) e^{-i \boldsymbol{k} \cdot \boldsymbol{r}} d \boldsymbol{r} \quad \text { and } \quad f(\boldsymbol{r})=\frac{1}{V} \sum_{\boldsymbol{k}} \widetilde{f}(\boldsymbol{k}) e^{i \boldsymbol{k} \cdot \boldsymbol{r}},
\end{equation}
$\text {with } \boldsymbol{k}=2 \pi\bm{m}/L \text { and } \boldsymbol{m}=\left(m_x, m_y, m_z\right) \in \mathbb{Z}^3 \text {. }$ The Fourier expansions of both $U_{\text{err}}$ and $\bm{F}_{\text{err}}(\bm{r}_i)$ are expressed in Proposition~\ref{prop::spectral}.

\begin{proposition}\label{prop::spectral}
Under the tinfoil boundary condition, $U_{\emph{err}}$ and $\bm{F}_{\emph{err}}(\bm{r}_i)$ have the following Fourier spectral expansion:
\begin{equation}
\label{eq::UerrFerr}
U_{\emph{err}}=\frac{1}{2V}\sum_{\bm{k}\neq \bm{0}}|\rho(\bm{k})|^2\widetilde{K}(k),\quad \text{and}\quad	\bm{F}_{\emph{err}}(\bm{r}_i)=\sum_{\bm{k}\neq \bm{0}}\frac{q_i\bm{k}}{V}\emph{Im}\left(e^{-i\bm{k}\cdot\bm{r}_i}\rho(\bm{k})\right)\widetilde{K}(k),
\end{equation}
where $k=|\bm{k}|$, $\widetilde{K}(k)$ is the Fourier transform of $K(r)$, and $\rho(\bm{k})$ is the structure factor expressed by
\begin{equation}
\rho(\bm{k})=\sum_{j=1}^{N}q_je^{i\bm{k}\cdot \bm{r}_j}.
\end{equation}
\end{proposition}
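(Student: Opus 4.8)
The plan is to derive the Fourier spectral expansion of $U_{\text{err}}$ and $\bm{F}_{\text{err}}(\bm{r}_i)$ directly from the real-space expressions \eqref{eq::error}--\eqref{4.2eq::Uerrferr} by exploiting the lattice-periodicity of the error kernel and Parseval's identity. First I would observe that the quantity
\[
\Phi_{i,\text{err}}=\sum_{j=1}^{N}\sum_{\bm{n}\in\mathbb{Z}^3}q_j K(|\bm{r}_j-\bm{r}_i+\bm{n}L|)
\]
is a lattice sum of the decaying, integrable kernel $K$ (which vanishes for $r<r_c$ and decays like $1/r$ times a Gaussian tail beyond $r_c$), so the periodized kernel $K_{\text{per}}(\bm{r}):=\sum_{\bm{n}}K(|\bm{r}+\bm{n}L|)$ admits a Fourier series on $\Omega$ whose coefficients are exactly $\widetilde{K}(k)/V$ by the Poisson summation formula; since $K$ is radially symmetric, $\widetilde{K}$ depends only on $k=|\bm{k}|$. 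I would then write $\Phi_{i,\text{err}}=\sum_{j}q_j K_{\text{per}}(\bm{r}_j-\bm{r}_i)=\frac{1}{V}\sum_{\bm{k}}\widetilde{K}(k)\sum_j q_j e^{i\bm{k}\cdot(\bm{r}_j-\bm{r}_i)}=\frac{1}{V}\sum_{\bm{k}}\widetilde{K}(k)e^{-i\bm{k}\cdot\bm{r}_i}\rho(\bm{k})$, dropping the $\bm{k}=\bm{0}$ term under the tinfoil boundary condition.

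Next, for the energy I would substitute this into $U_{\text{err}}=\frac{1}{2}\sum_i q_i\Phi_{i,\text{err}}$ to get
\[
U_{\text{err}}=\frac{1}{2V}\sum_{\bm{k}\neq\bm{0}}\widetilde{K}(k)\sum_i q_i e^{-i\bm{k}\cdot\bm{r}_i}\rho(\bm{k})=\frac{1}{2V}\sum_{\bm{k}\neq\bm{0}}\widetilde{K}(k)\,\overline{\rho(\bm{k})}\,\rho(\bm{k})=\frac{1}{2V}\sum_{\bm{k}\neq\bm{0}}|\rho(\bm{k})|^2\widetilde{K}(k),
\]
using that $\sum_i q_i e^{-i\bm{k}\cdot\bm{r}_i}=\overline{\rho(\bm{k})}$ since the charges $q_i$ are real. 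For the force I would take $\bm{F}_{\text{err}}(\bm{r}_i)=-\partial U_{\text{err}}/\partial\bm{r}_i$, noting that $\bm{r}_i$ appears in $\rho(\bm{k})$ and its conjugate through the single term $q_i e^{\pm i\bm{k}\cdot\bm{r}_i}$; differentiating and collecting the two conjugate contributions yields a factor $i\bm{k}(e^{-i\bm{k}\cdot\bm{r}_i}\rho(\bm{k})-e^{i\bm{k}\cdot\bm{r}_i}\overline{\rho(\bm{k})})$, which equals $-2\bm{k}\,\text{Im}(e^{-i\bm{k}\cdot\bm{r}_i}\rho(\bm{k}))$, and after the overall sign and the $1/2$ one obtains $\bm{F}_{\text{err}}(\bm{r}_i)=\sum_{\bm{k}\neq\bm{0}}\frac{q_i\bm{k}}{V}\text{Im}(e^{-i\bm{k}\cdot\bm{r}_i}\rho(\bm{k}))\widetilde{K}(k)$. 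One must be a little careful to separate the self-term ($j=i$) correctly; since the prime in \eqref{solutionperiodic} is cancelled against the $\bm{n}=\bm{0}$ self-interaction and $K(0)=0$ anyway (as $r_c>0$), the diagonal term contributes nothing and the formulae hold as stated.

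The main obstacle is justifying the interchange of the lattice sum over $\bm{n}$ with the Fourier series, i.e.\ the applicability of Poisson summation to $K$: the bare $1/r$ part of $K(r)$ is only conditionally summable over the lattice, which is precisely the subtlety the paper flags as the reason Ewald-type techniques fail. Here, however, $K(r)=(1/r-\mathcal{F}_b^\sigma(r))H(r-r_c)$ and by the pointwise bilateral-series bound \eqref{eq::pointwiseerror} the bracket is $O(e^{-r^2/s_{M+1}^2})$-small for $r\lesssim s_M$ but the relevant decay for large $r$ comes from the fact that $1/r-\mathcal{F}_b^\sigma(r)$ is dominated by the tail Gaussians $\ell>M$, hence $K$ decays at least like a single Gaussian $e^{-r^2/s_{M+1}^2}$ up to algebraic factors — this makes $K\in L^1(\mathbb{R}^3)$ with rapidly decaying, continuous Fourier transform, so Poisson summation applies and the rearrangements above are legitimate. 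I would make this decay estimate explicit (invoking Assumption~\ref{ass:1} to control the relevant scales) as the one non-routine point, and the remainder is the bookkeeping of conjugates and the derivative described above.
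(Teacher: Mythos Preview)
Your formal derivation --- Poisson summation for the periodized kernel, then substitution into the energy and differentiation for the force --- is the standard argument and is precisely what the paper assumes (the proposition is stated there without proof). The gap is in your justification of Poisson summation. You claim $K$ decays like $e^{-r^2/s_{M+1}^2}$ because ``$1/r-\mathcal{F}_b^\sigma(r)$ is dominated by the tail Gaussians $\ell>M$.'' But the tail Gaussians with $\ell>M$ are the \emph{wide} ones (recall $s_\ell=\sqrt{2}\,b^{\ell}\sigma\to\infty$ as $\ell\to\infty$), and it is precisely their presence in the full bilateral series that reproduces $1/r$ at large distance. Since $\mathcal{F}_b^\sigma(r)=\sum_{\ell=0}^{M}w_\ell e^{-r^2/s_\ell^2}\to 0$ as $r\to\infty$, one has $K(r)\sim 1/r$ for $r\gg s_M$; hence $K\notin L^1(\mathbb{R}^3)$ and classical Poisson summation does not apply.

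This is not a technicality the paper overlooks --- it explicitly flags it (``the long-range Coulomb kernel $1/r$ is still involved in $K(r)$ and its pairwise sum is only conditionally convergent''). The Fourier transform $\widetilde{K}(k)$ must be taken in the generalized sense: in the proof of Theorem~\ref{the::conver} the leading piece is identified with $4\pi k^{-2}\cos(kr_c)$, the generalized transform of $r^{-1}H(r-r_c)$, and the resulting Fourier series for $U_{\text{err}}$ is shown to converge only by virtue of charge neutrality together with the tinfoil convention that removes the $\bm{k}=\bm{0}$ mode. A rigorous proof of Proposition~\ref{prop::spectral} therefore cannot rest on $K\in L^1$; it must either split $K=T+G_{\text{up}}+G_{\text{down}}$ first and treat the $1/r$-containing piece via generalized Fourier analysis, or invoke the distributional transform of $1/r$ under the tinfoil/neutrality conventions from the outset.
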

Section \ref{subsec::u-series} shows that the u-series decomposition is derived from applying the trapezoidal rule to the integral representation of $1/r$ and dropping unwanted terms. For the convenience of analysis, one can split the kernel function $K(r)$ in Eq.~\eqref{kernel:K}  into three components:
\begin{equation}
K(r)=T(r)+G_{\text{up}}(r)+G_{\text{down}}(r),
\end{equation}
where
\begin{equation}\label{eq::K1}
T(r)=\left(\frac{1}{r}-\sum_{\ell=-\infty}^{\infty}w_\ell e^{-r^2/s_\ell^2}\right)H(r-r_c),
\end{equation}
\begin{equation}
\label{eq::GupGdow} 
G_\textup{up}(r)=\sum_{\ell=M+1}^{\infty}w_\ell e^{-r^2/s_\ell^2}H(r-r_c), 
\end{equation}
and
\begin{equation}\label{eq::Gdowndefi}
G_\textup{down}(r)= \sum_{\ell=-\infty}^{-1}w_\ell e^{-r^2/s_\ell^2} H(r-r_c). 
\end{equation}
Here, $T(r)$ recovers the approximate error of the trapezoidal rule, and $G_{\text{up}}(r)$ and $G_{\text{down}}(r)$ arise due to the exclusion of Gaussians with $\ell>M$ and $\ell<0$ , respectively.

To give a feasible bound of $U_{\text{err}}$ and $\bm{F}_{\text{err}}(\bm{r}_i)$, one needs to derive analytic expressions of the Fourier transforms of these kernel functions. One first introduces Lemma \ref{lemma1} for the Fourier transform of a radially symmetric function \cite{stein2011fourier}.   
\begin{lemma}
	\label{lemma1}
	Assume that the Fourier transform of $f(\bm{x})$ exists. If $f(\bm{x})$ is a radially symmetric function in 3D, its Fourier transform is also radially symmetric, expressed by 
	\begin{equation}
	F(k)=4\pi\int_{0}^{\infty}\frac{\sin(kr)}{k}f(r)rdr.
	\end{equation}
\end{lemma}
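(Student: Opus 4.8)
The plan is to compute the three-dimensional Fourier transform $F(\bm{k})=\int_{\mathbb{R}^3} f(\bm{x})\,e^{-i\bm{k}\cdot\bm{x}}\,d\bm{x}$ directly in spherical coordinates, after first reducing to the case where $\bm{k}$ lies along a fixed axis. First I would establish that $F$ is itself radially symmetric: for any rotation $R\in SO(3)$, the change of variables $\bm{x}\mapsto R^{-1}\bm{x}$ has unit Jacobian, and combining $f(R^{-1}\bm{x})=f(\bm{x})$ (radial symmetry of $f$) with the identity $\bm{k}\cdot R^{-1}\bm{x}=(R\bm{k})\cdot\bm{x}$ yields $F(R\bm{k})=F(\bm{k})$. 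Hence $F(\bm{k})$ depends only on $k=|\bm{k}|$, and it suffices to evaluate it at the convenient choice $\bm{k}=k\bm{e}_z$.

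With $\bm{k}=(0,0,k)$, I would parametrize $\bm{x}$ by the radius $r\in[0,\infty)$, polar angle $\theta\in[0,\pi]$, and azimuth $\varphi\in[0,2\pi)$, so that $\bm{k}\cdot\bm{x}=kr\cos\theta$ and $d\bm{x}=r^2\sin\theta\,dr\,d\theta\,d\varphi$. Since the integrand is independent of $\varphi$, the azimuthal integration contributes a factor $2\pi$, leaving
\[
F(k)=2\pi\int_0^\infty f(r)\,r^2\left(\int_0^\pi e^{-ikr\cos\theta}\sin\theta\,d\theta\right)dr.
\]
The inner angular integral is then handled by the substitution $u=\cos\theta$, $du=-\sin\theta\,d\theta$, which turns it into $\int_{-1}^1 e^{-ikru}\,du=2\sin(kr)/(kr)$. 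Substituting this back and simplifying $r^2\cdot 2\sin(kr)/(kr)=2r\sin(kr)/k$ produces exactly
\[
F(k)=4\pi\int_0^\infty\frac{\sin(kr)}{k}\,f(r)\,r\,dr,
\]
as claimed.

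The only genuine technical point is justifying the passage to spherical coordinates and the use of Fubini's theorem to separate the radial and angular integrations. This is legitimate under the lemma's standing hypothesis that the Fourier transform of $f$ exists, i.e.\ $f\in L^1(\mathbb{R}^3)$, so that $|f(\bm{x})|$ is integrable over $\mathbb{R}^3$ and the iterated integral is absolutely convergent; the rotation-invariance argument in the first step likewise only requires this integrability. I therefore expect no serious obstacle beyond this routine integrability check, the computation being an entirely classical evaluation of the radial (Hankel-type) form of the Fourier transform.
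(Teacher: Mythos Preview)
Your argument is correct and is exactly the standard derivation of the radial form of the 3D Fourier transform. The paper does not actually prove this lemma; it simply cites it as a known result from Stein--Shakarchi, so your spherical-coordinate computation with the rotation-invariance reduction is precisely the textbook proof underlying that citation.
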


By Lemma \ref{lemma1}, one can represent the 3D Fourier transforms of $T,G_{\text{up}}$ and $G_{\text{down}}$  as 1D integrals, that is,
\begin{equation}\label{eq::3.91}
\begin{aligned}
&\widetilde{T}(k)=\dfrac{4\pi}{k}\int_{r_c}^{\infty}\sin(kr)\left(1-r\sum_{\ell=-\infty}^{\infty}w_\ell e^{-r^2/s_\ell^2}\right)dr,\\
&\widetilde{G}_{\text{up}}(k)=\dfrac{4\pi}{k}\int_{r_c}^{\infty}\sin(kr)r\sum_{\ell=M+1}^{\infty}w_\ell e^{-r^2/s_\ell^2}dr,\\
&\widetilde{G}_{\text{down}}(k)=\dfrac{4\pi}{k}\int_{r_c}^{\infty}\sin(kr)r\sum_{\ell=-\infty}^{-1}w_\ell e^{-r^2/s_\ell^2}dr.
\end{aligned}
\end{equation}
An essential goal of this paper is to analyze the leading asymptotic orders of these oscillatory Fourier integrals as $b\rightarrow 1$, under the conditions stated in  Assumption~\ref{ass:1}. The results are outlined in Propositions~\ref{prop::prop1}-\ref{prop::prop3}, with detailed proofs provided in Appendices~\ref{pf::prop1}-\ref{pf::prop3}.

\begin{proposition}\label{prop::prop1}
%Let $\mathcal{H}_{m}=1-\delta_{m0}$ with $\delta_{m0}$ denoting the Kronecker delta function. 
If the tinfoil boundary condition is specified, $\widetilde{T}(k)$ can be expressed as
	\begin{equation}\label{eq::K1kk}
	\widetilde{T}(k)=\sum_{m\neq 0}\mathcal{C}(m)\widetilde{\phi}_m(k)
	\end{equation}
	where the summation over $m$ ranges over $\mathbb{Z}$ excluding $m=0$,
	\begin{equation}\label{eq::Cndef}
	\mathcal{C}(m)=-\dfrac{\Gamma\left((1-\alpha_m)/2\right)}{\sqrt{\pi}}e^{-\alpha_m \log\sqrt{2}\sigma}
	\end{equation}
	is an $m$-dependent constant with $\alpha_m=2m\pi i/\log b$ and $\Gamma(\cdot)$ being the Gamma function, and
	\begin{equation}\label{eq::Th1Ink}
	\widetilde{\phi}_m(k)=\frac{4\pi}{k^{2+\alpha_m}}\left[\mathcal{I}_m-\mathcal{J}_{m}(k)\right]
	\end{equation}
	with two components
	\begin{equation}
	\label{eq::Im}
	\mathcal{I}_m:=\cosh\left(\frac{m\pi^2}{\log b}\right)\Gamma\left(1+\alpha_m\right)
\quad	\text{and} \quad
	\mathcal{J}_{m}(k):=\int_{0}^{kr_c}\sin(x)x^{\alpha_m}dx.
	\end{equation} 
\end{proposition}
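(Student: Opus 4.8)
The plan is to reduce everything to the Fourier--Bohr (almost-periodic) expansion of the pointwise trapezoidal error, transform that expansion termwise, and then evaluate one classical Mellin integral.

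\textbf{Step 1 (Poisson summation for the pointwise error).} I would apply the Poisson summation formula to the trapezoidal discretization of \eqref{GammaExpansion}. Writing $g(t)=\pi^{-1/2}e^{-e^{t}r^{2}+t/2}$, with node spacing $h=2\log b$ and offset $t_{0}=-\log 2\sigma^{2}$, one has $\sum_{\ell}w_{\ell}e^{-r^{2}/s_{\ell}^{2}}=h\sum_{\ell}g(t_{0}+\ell h)=\sum_{m}\widehat{g}(2\pi m/h)\,e^{2\pi i m t_{0}/h}$, and the $m=0$ term is $\widehat{g}(0)=\int g=1/r$. Hence the pointwise error $\varepsilon(r):=\tfrac1r-\sum_{\ell}w_{\ell}e^{-r^{2}/s_{\ell}^{2}}$ equals $-\sum_{m\neq0}\widehat{g}(\pi m/\log b)\,e^{i\pi m t_{0}/\log b}$. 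The transform $\widehat{g}(\xi)$ is computed by the substitution $u=e^{t}r^{2}$, which turns it into a Gamma integral, $\widehat{g}(\xi)=\pi^{-1/2}\Gamma(\tfrac12-i\xi)\,r^{-1+2i\xi}$. Inserting $\xi=\pi m/\log b$ (so $2i\xi=\alpha_{m}$ and $\tfrac12-i\xi=(1-\alpha_{m})/2$) together with $t_{0}=-2\log(\sqrt2\sigma)$ reproduces exactly $\varepsilon(r)=\sum_{m\neq0}\mathcal{C}(m)\,r^{\alpha_{m}-1}$ with $\mathcal{C}(m)$ and $\alpha_{m}$ as stated. The reflection identity $|\Gamma(\tfrac12-i m\pi/\log b)|=\sqrt{\pi/\cosh(m\pi^{2}/\log b)}$ shows this series converges absolutely and geometrically, uniformly on $r\geq r_{c}$ (note $|r^{\alpha_{m}-1}|=r^{-1}$).

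\textbf{Step 2 (termwise Fourier transform).} Multiplying by $H(r-r_{c})$ gives $T(r)=\sum_{m\neq0}\mathcal{C}(m)\phi_{m}(r)$ with $\phi_{m}(r)=r^{\alpha_{m}-1}H(r-r_{c})$. Since $T(r)=O(1/r)$ at infinity, $\widetilde{T}(k)$ is not an absolutely convergent integral; I would read it as the improper, Abel-regularized transform (the $\lim_{\delta\to0^{+}}$ of the $e^{-\delta r}$-truncated integral), which is the form already appearing in \eqref{eq::3.91} and is the one compatible with the $1/k$ factor in Lemma~\ref{lemma1}. With the $e^{-\delta r}$ factor in place, the $m$-tail converges uniformly by the geometric bound on $\mathcal{C}(m)$, so the sum may be pulled out of the transform and the limit $\delta\to0^{+}$ taken afterwards. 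Applying Lemma~\ref{lemma1} to each $\phi_{m}$ gives $\widetilde{\phi}_{m}(k)=\tfrac{4\pi}{k}\int_{r_{c}}^{\infty}\sin(kr)\,r^{\alpha_{m}}\,dr$; rescaling $x=kr$ extracts the factor $k^{-(2+\alpha_{m})}$ and splits the range as $\int_{0}^{\infty}-\int_{0}^{kr_{c}}$, yielding $\widetilde{\phi}_{m}(k)=\tfrac{4\pi}{k^{2+\alpha_{m}}}\bigl[\int_{0}^{\infty}\sin x\,x^{\alpha_{m}}\,dx-\mathcal{J}_{m}(k)\bigr]$.

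\textbf{Step 3 (the sine--power integral).} It remains to identify $\int_{0}^{\infty}\sin x\,x^{\alpha_{m}}\,dx$ with $\mathcal{I}_{m}$. I would invoke the classical Mellin transform $\int_{0}^{\infty}x^{s-1}\sin x\,dx=\Gamma(s)\sin(\pi s/2)$, valid for $0<\Re s<1$ and extended by the same Abel regularization to the line $\Re s=1$, with $s=1+\alpha_{m}$. Then $\sin\!\bigl(\pi(1+\alpha_{m})/2\bigr)=\cos(\pi\alpha_{m}/2)$, and since $\alpha_{m}=2m\pi i/\log b$ is purely imaginary this equals $\cosh(m\pi^{2}/\log b)$, so the integral is $\Gamma(1+\alpha_{m})\cosh(m\pi^{2}/\log b)=\mathcal{I}_{m}$. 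Combining the three steps gives \eqref{eq::K1kk}--\eqref{eq::Im}.

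\textbf{Main obstacle.} The real content is that none of the integrals converges absolutely --- the Fourier integral of $T$ because $T(r)\sim1/r$, and the sine--power integral because $\Re\alpha_{m}=0$ --- so the work is to fix a single regularization (the $e^{-\delta r}$ cutoff), justify interchanging $\lim_{\delta\to0^{+}}$, the sum over $m$, and the radial integral using the geometric decay of $\mathcal{C}(m)$, and check that this is exactly the regularization selected by the tinfoil boundary condition (which is also what removes the genuinely divergent $k=0$ mode). Once the limiting procedure is pinned down, the two Gamma-integral evaluations in Steps~1 and 3 are routine.
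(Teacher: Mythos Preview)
Your proposal is correct and, through Step~2, mirrors the paper exactly: the paper also establishes the pointwise identity $\tfrac1r-\sum_\ell w_\ell e^{-r^2/s_\ell^2}=\sum_{m\neq0}\mathcal{C}(m)r^{\alpha_m-1}$ via Poisson summation (their Lemma in the appendix) and then Fourier-transforms termwise using Lemma~\ref{lemma1}. The genuine divergence is in Step~3. Where you evaluate $\int_0^\infty\sin x\,x^{\alpha_m}dx$ in one stroke by the Mellin identity $\int_0^\infty x^{s-1}\sin x\,dx=\Gamma(s)\sin(\pi s/2)$ and the observation $\sin(\pi(1+\alpha_m)/2)=\cosh(m\pi^2/\log b)$, the paper instead keeps the full integral $\int_{r_c}^\infty\sin(kr)r^{\alpha_m}dr$, splits $\sin$ into complex exponentials, substitutes $u=\mp ikr$ to recognize $\Gamma(1+\alpha_m)-\gamma(1+\alpha_m,\mp ikr_c)$, and then invokes an integral representation of the incomplete Gamma function to reduce the two pieces to the stated $\mathcal{I}_m-\mathcal{J}_m(k)$. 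Your route is shorter and more transparent for this proposition; the paper's incomplete-Gamma detour buys nothing extra here (they do not reuse the incomplete-Gamma form later, since $\mathcal{J}_m(k)$ is subsequently rewritten as a hypergeometric). You are also more explicit than the paper about the Abel regularization needed to make the non-absolutely-convergent radial integrals meaningful and to justify the sum--integral interchange; the paper leaves this implicit.
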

\begin{proposition}\label{prop::prop2}
	$\widetilde{G}_{\emph{up}}(k)$ can be expressed as
	\begin{equation}
	\begin{split}
	\widetilde{G}_{\emph{up}}(k)= 2\pi\log b\sum_{\ell=M+1}^{\infty}s_{\ell}^2e^{-\frac{s_{\ell}^2k^2}{4}}+\dfrac{4\pi}{k^2}\left[\frac{2\log b}{(b-1)\sqrt{\pi}s_M}+O\left(\frac{L^2}{s_M^3}\right)\right]\mathscr{P}(r_c,k)
	\end{split}
	\end{equation}
	where
	\begin{equation}\label{eq::Prck}
	\mathscr{P}(r_c,k):=\frac{\sin(kr_c)}{k}-r_c\cos(kr_c).	
	\end{equation}
\end{proposition}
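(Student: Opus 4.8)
The plan is to compute the one-dimensional Fourier integral
\[
\widetilde{G}_{\textup{up}}(k)=\frac{4\pi}{k}\int_{r_c}^{\infty}\sin(kr)\,r\sum_{\ell=M+1}^{\infty}w_\ell e^{-r^2/s_\ell^2}\,dr
\]
by splitting it into the ``full-line'' contribution $\int_0^\infty$ minus the ``missing piece'' $\int_0^{r_c}$, and then exploiting Assumption~\ref{ass:1} ($s_{M+1}\gg L\ge 2r_c$), which forces every Gaussian in the sum to be essentially flat on $[0,r_c]$. First I would handle the full-line integral: using the standard identity $\int_0^\infty \sin(kr)\,r\,e^{-r^2/s^2}\,dr=\tfrac{\sqrt\pi}{4}\,k\,s^3 e^{-s^2k^2/4}$, interchanging sum and integral (justified by absolute convergence, since $w_\ell s_\ell^3\sim b^{2\ell}$ times Gaussian decay in $k$, and for $k$ in the relevant lattice $s_\ell^2 k^2$ grows), gives exactly the first term $2\pi\log b\sum_{\ell=M+1}^\infty s_\ell^2 e^{-s_\ell^2 k^2/4}$ after substituting $w_\ell=(\pi/2)^{-1/2}b^{-\ell}\sigma^{-1}\log b$ and $s_\ell=\sqrt2\,b^\ell\sigma$ so that $w_\ell s_\ell^3=\tfrac{2}{\sqrt\pi}(\log b)\,s_\ell^2$.

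Next I would treat the correction term $-\frac{4\pi}{k}\int_0^{r_c}\sin(kr)\,r\sum_{\ell\ge M+1}w_\ell e^{-r^2/s_\ell^2}\,dr$. On $[0,r_c]$ we Taylor-expand each Gaussian: $e^{-r^2/s_\ell^2}=1-r^2/s_\ell^2+O(r^4/s_\ell^4)$, where by Assumption~\ref{ass:1} the ratio $r^2/s_\ell^2\le r_c^2/s_{M+1}^2\lesssim L^2/s_M^2$ is small (and $s_{M+1}/s_M=b$). The leading term pulls out $\left(\sum_{\ell\ge M+1}w_\ell\right)\int_0^{r_c}\sin(kr)\,r\,dr$, and the integral $\int_0^{r_c}r\sin(kr)\,dr=\frac{\sin(kr_c)}{k^2}-\frac{r_c\cos(kr_c)}{k}=\frac1k\,\mathscr P(r_c,k)$ with $\mathscr P$ as in \eqref{eq::Prck}. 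The geometric-type tail sum is $\sum_{\ell=M+1}^\infty w_\ell=(\pi/2)^{-1/2}\sigma^{-1}(\log b)\sum_{\ell\ge M+1}b^{-\ell}=(\pi/2)^{-1/2}\sigma^{-1}(\log b)\,\frac{b^{-M}}{b-1}=\frac{2\log b}{(b-1)\sqrt\pi\,s_M}$ after writing $b^{-M}\sigma^{-1}$ in terms of $s_M=\sqrt2\,b^M\sigma$; this reproduces the stated prefactor. The next-order term $-r^2/s_\ell^2$ produces $-\left(\sum_{\ell\ge M+1}w_\ell/s_\ell^2\right)\int_0^{r_c}\sin(kr)\,r^3\,dr$; bounding $\int_0^{r_c}r^3\sin(kr)\,dr\lesssim r_c^2/k\cdot|\mathscr P(r_c,k)|$ up to $O(r_c^4)$ smooth remainders and noting $\sum_{\ell\ge M+1}w_\ell/s_\ell^2\sim (\log b)/s_M^3$ (again a geometric tail) gives the claimed $O(L^2/s_M^3)$ correction to the bracket, since $r_c\lesssim L$. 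I would collect all these into the $\frac{4\pi}{k^2}[\cdots]\mathscr P(r_c,k)$ form.

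The main obstacle is making the remainder bookkeeping in the Taylor expansion clean and uniform in $k$: one must check that the $O(r^4/s_\ell^4)$ tail, after summation over $\ell$ and integration against $\sin(kr)$ on $[0,r_c]$, is genuinely absorbed into the $O(L^2/s_M^3)$ term times $\mathscr P(r_c,k)$ rather than producing a term of a different $k$-shape — equivalently, one needs each $\int_0^{r_c}r^{2j+1}\sin(kr)\,dr$ ($j\ge1$) to factor through $\mathscr P(r_c,k)$ with the right polynomial-in-$r_c$ coefficients (which it does, by repeated integration by parts, up to lower-order pieces that are themselves $O(r_c^{2}|\mathscr P|/k)$ after using that the bracket already carries a $1/s_M$). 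A secondary technical point is justifying the interchange of the infinite $\ell$-sum with both integrals; this follows from uniform absolute convergence on $r\ge 0$ since $\sum_\ell w_\ell e^{-r^2/s_\ell^2}$ converges uniformly (it is bounded by the convergent bilateral series near the relevant range and the tail $w_\ell\sim b^{-\ell}$ is summable), so Fubini/Tonelli applies on each finite interval and on the half-line by dominated convergence. With those two points dispatched, the proposition follows by assembling the pieces and discarding $o$-contributions as $b\to1$.
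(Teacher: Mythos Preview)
Your proposal is correct and follows essentially the same approach as the paper: split $\int_{r_c}^\infty=\int_0^\infty-\int_0^{r_c}$, evaluate the full-line Gaussian integrals exactly (using $w_\ell s_\ell^3=\tfrac{2}{\sqrt\pi}(\log b)\,s_\ell^2$), Taylor-expand $e^{-r^2/s_\ell^2}=1+O(r^2/s_\ell^2)$ on $[0,r_c]$, and sum the geometric tail $\sum_{\ell>M}w_\ell$ to obtain the $\frac{2\log b}{(b-1)\sqrt\pi\,s_M}$ prefactor. Your discussion of the remainder's $k$-shape is in fact more careful than the paper's, which simply records the correction as $O\bigl(k\int_0^{r_c}\sin(kr)\,r^3/s_\ell^2\,dr\bigr)$ and then passes to $O(L^2/s_M^3)$ without further comment.
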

\begin{proposition}\label{prop::prop3}
	$\widetilde{G}_{\emph{down}}(k)$ has the expression
	\begin{equation}
	\widetilde{G}_{\emph{down}}(k)=\frac{4\pi}{k^2}\sum_{\ell=-\infty}^{-1}w_{\ell}e^{-r_c^2/s_{\ell}^2}\beta_{\ell}(k),
	\end{equation}
	where
	\begin{equation}\label{eq::2.25}
	\beta_{\ell}(k):=\frac{(ks_{\ell})^2r_c\cos(kr_c)+2kr_c^2\sin(kr_c)}{(2r_c/s_{\ell})^2+(ks_{\ell})^2}\left[1+O\left(\frac{r_c}{s_{\ell}}\right)^{-2}\right].
	\end{equation}
 
\end{proposition}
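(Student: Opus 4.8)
Starting from the third identity in Eq.~\eqref{eq::3.91}, I would first interchange the sum over $\ell$ with the integral; this is legitimate because $\sum_{\ell\le-1}w_\ell\int_{r_c}^{\infty}r\,e^{-r^2/s_\ell^2}\,dr=\sum_{\ell\le-1}w_\ell\tfrac{s_\ell^2}{2}e^{-r_c^2/s_\ell^2}$ converges (the terms are super-exponentially small as $\ell\to-\infty$), so Fubini applies. It then suffices to evaluate, for each $\ell\le-1$,
\begin{equation}
I_\ell:=\int_{r_c}^{\infty}\sin(kr)\,r\,e^{-r^2/s_\ell^2}\,dr=\Image\!\left(\int_{r_c}^{\infty}r\,e^{ikr}e^{-r^2/s_\ell^2}\,dr\right),
\end{equation}
and, using the exact identity $r\,e^{-r^2/s_\ell^2}=-\tfrac{s_\ell^2}{2}\tfrac{d}{dr}e^{-r^2/s_\ell^2}$ together with one integration by parts, to reduce the complex integral to
\begin{equation}
\int_{r_c}^{\infty}r\,e^{ikr}e^{-r^2/s_\ell^2}\,dr=\frac{s_\ell^2}{2}e^{ikr_c-r_c^2/s_\ell^2}+\frac{iks_\ell^2}{2}\,\Phi_\ell,\qquad\Phi_\ell:=\int_{r_c}^{\infty}e^{ikr-r^2/s_\ell^2}\,dr.
\end{equation}
The whole problem thus collapses to the single Gaussian integral $\Phi_\ell$ with a linear oscillatory phase.

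Next I would complete the square, $ikr-r^2/s_\ell^2=-(r-iks_\ell^2/2)^2/s_\ell^2-k^2s_\ell^2/4$, and shift the contour back onto the real axis to get $\Phi_\ell=\tfrac{\sqrt{\pi}}{2}s_\ell\,e^{-k^2s_\ell^2/4}\erfc(z_\ell)$ with $z_\ell=r_c/s_\ell-iks_\ell/2$. The structural input from Assumption~\ref{ass:1} is that $\Real z_\ell=r_c/s_\ell\ge r_c/s_{-1}>1$ for every $\ell\le-1$ and $|\arg z_\ell|<\pi/2$, so one may invoke the classical large-argument expansion $\erfc(z)=\tfrac{e^{-z^2}}{z\sqrt{\pi}}\bigl(1+R(z)\bigr)$ with $|R(z)|\le C|z|^{-2}$, valid for $|\arg z|<3\pi/4$. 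Since $z_\ell^2=r_c^2/s_\ell^2-ikr_c-k^2s_\ell^2/4$, the factor $e^{-z_\ell^2}$ exactly cancels the prefactor $e^{-k^2s_\ell^2/4}$ up to $e^{-r_c^2/s_\ell^2}e^{ikr_c}$, which yields
\begin{equation}
\Phi_\ell=\frac{s_\ell^2\,e^{ikr_c-r_c^2/s_\ell^2}}{2\left(r_c-iks_\ell^2/2\right)}\bigl(1+R(z_\ell)\bigr).
\end{equation}

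Substituting this back, the bracket $1+\tfrac{iks_\ell^2/2}{r_c-iks_\ell^2/2}$ collapses to $\tfrac{r_c}{r_c-iks_\ell^2/2}$; taking the imaginary part and rationalizing (multiply numerator and denominator by $4/s_\ell^2$) turns the leading part of $I_\ell$ into $e^{-r_c^2/s_\ell^2}\bigl[2r_c^2\sin(kr_c)+kr_cs_\ell^2\cos(kr_c)\bigr]\bigl[(2r_c/s_\ell)^2+(ks_\ell)^2\bigr]^{-1}$. Since $\widetilde{G}_{\emph{down}}(k)=\tfrac{4\pi}{k}\sum_{\ell\le-1}w_\ell I_\ell$ and $k^2s_\ell^2=(ks_\ell)^2$, this is precisely $\tfrac{4\pi}{k^2}\sum_{\ell\le-1}w_\ell e^{-r_c^2/s_\ell^2}\beta_\ell(k)$ with $\beta_\ell$ as stated, the dominant contribution coming from $\ell=-1$ because $e^{-r_c^2/s_\ell^2}$ is super-exponentially small as $\ell\to-\infty$.

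The step I expect to be the main obstacle is controlling the remainder $R(z_\ell)$: one must show that the additive correction it produces is $O\bigl((r_c/s_\ell)^{-2}\bigr)$ \emph{relative to the leading term and uniformly in the wavenumber $k$}. For large $k$ both the numerator and the denominator of the leading rational factor grow like $(ks_\ell)^2$, so a crude estimate could be swamped; the fix is the identity $|z_\ell|^2=(r_c/s_\ell)^2+(ks_\ell/2)^2\ge(r_c/s_\ell)^2$, which gives $|R(z_\ell)|\le C(s_\ell/r_c)^2$, together with a short two-regime split in $k$ (roughly $ks_\ell^2\lesssim r_c$ versus $ks_\ell^2\gtrsim r_c$) that bounds the correction by $(r_c/s_\ell)^{-2}$ times the leading term throughout. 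Secondary care is needed to justify the contour shift defining $\Phi_\ell$ and the Poincar\'e-type remainder estimate for $\erfc$, both of which are standard. An alternative route that sidesteps special functions is to integrate by parts repeatedly in $I_\ell$, each round peeling off a factor $(s_\ell/r_c)^2$ and regenerating the denominator $(2r_c/s_\ell)^2+(ks_\ell)^2$; this is elementary but makes the error bookkeeping heavier.
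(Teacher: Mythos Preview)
Your approach is essentially the same as the paper's. The paper states the exact identity
\[
\widetilde{G}_{\text{down}}(k)=\dfrac{4\pi}{k}\sum_{\ell=-\infty}^{-1}\left[\frac{w_{\ell}s_{\ell}^2}{2}e^{-r_c^2/s_{\ell}^2}\sin(kr_c)+\frac{\sqrt{\pi}kw_{\ell}s_{\ell}^3}{4}e^{-s_{\ell}^2k^2/4}\,\Real\!\left\{\erfc\!\left(\frac{r_c}{s_{\ell}}-\frac{ks_{\ell}}{2}i\right)\right\}\right],
\]
which is exactly what your integration-by-parts step plus the completed-square evaluation of $\Phi_\ell$ produce (your two terms $\tfrac{s_\ell^2}{2}e^{ikr_c-r_c^2/s_\ell^2}$ and $\tfrac{iks_\ell^2}{2}\Phi_\ell$, after taking imaginary parts, give precisely the two summands above). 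The paper then invokes the same large-argument expansion $\erfc(z)=\tfrac{e^{-z^2}}{\sqrt{\pi}z}\bigl(1+O(z^{-2})\bigr)$ under the hypothesis $\Real z=r_c/s_\ell>1$ from Assumption~\ref{ass:1}, truncated at $\mathcal{M}=0$, and reads off $\beta_\ell(k)$ directly. Your write-up is more explicit about the contour shift and about controlling the remainder uniformly in $k$ via $|z_\ell|^2\ge (r_c/s_\ell)^2$, points the paper leaves implicit, but the argument is the same.
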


Substituting these results into Eq.~\eqref{eq::UerrFerr} allows for establishing an error estimate for the u-series decomposition. Consider the error in electrostatic energy 
\begin{equation}\label{eq::Uerr}
\begin{split}
U_{\text{err}}=&\frac{1}{2V}\sum_{\bm{k}\neq \bm{0}}|\rho(\bm{k})|^2\left[\widetilde{T}(k)+\widetilde{G}_{\text{up}}(k)+\widetilde{G}_{\text{down}}(k)\right]\\
:=&E_{T}+E_{G}^{\text{up}}+E_{G}^{\text{down}}.
\end{split}
\end{equation}
Next, we will estimate these three terms separately.

\subsection{The estimate of \texorpdfstring{$E_{T}$}~}\label{subsec::1}
To estimate $E_{T}$, we introduce Lemma \ref{lemma:Gamma} for the Gamma function with complex variable. The proof of this lemma can be found in the Appendix of Ref. \cite{DEShaw2020JCP}.

\begin{lemma}\label{lemma:Gamma}
	Gamma function $\Gamma(\cdot)$ has the following asymptotic approximation
	\begin{equation}\label{eq::lemma3}
	\left|\Gamma(\alpha+i\beta)\right|\simeq (2\pi)^{1/2}(\alpha^2+\beta^2)^{\frac{2\alpha-1}{4}}e^{-\frac{\pi}{2}|\beta|}
	\end{equation}
	at the $\beta\rightarrow \infty$ limit.
\end{lemma}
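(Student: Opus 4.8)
The statement to prove is Lemma~\ref{lemma:Gamma}, the asymptotic approximation $|\Gamma(\alpha+i\beta)| \simeq (2\pi)^{1/2}(\alpha^2+\beta^2)^{(2\alpha-1)/4}e^{-\pi|\beta|/2}$ as $\beta\to\infty$.

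The plan is to derive this from Stirling's asymptotic formula for the Gamma function in the complex plane. Recall that $\log\Gamma(z) = (z-\tfrac12)\log z - z + \tfrac12\log(2\pi) + O(1/|z|)$ as $|z|\to\infty$, valid uniformly in any sector $|\arg z| \le \pi - \delta$. First I would set $z = \alpha + i\beta$ and take real parts of both sides, since $|\Gamma(z)| = e^{\Real\log\Gamma(z)}$. Writing $z = Re^{i\theta}$ with $R = (\alpha^2+\beta^2)^{1/2}$ and $\theta = \arg z$, one has $\log z = \log R + i\theta$, so
\begin{equation}
\Real\left[(z-\tfrac12)\log z\right] = (\alpha-\tfrac12)\log R - \beta\,\theta.
\end{equation}
Also $\Real(-z) = -\alpha$ and $\Real(\tfrac12\log 2\pi) = \tfrac12\log 2\pi$. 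Hence $\Real\log\Gamma(z) = (\alpha-\tfrac12)\log R - \beta\theta - \alpha + \tfrac12\log 2\pi + o(1)$.

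Next I would analyze the term $-\beta\theta$ in the regime $\beta\to\infty$ with $\alpha$ fixed (or bounded). As $\beta\to+\infty$, $\theta = \arg(\alpha+i\beta) = \tfrac{\pi}{2} - \arctan(\alpha/\beta) = \tfrac{\pi}{2} - \alpha/\beta + O(1/\beta^3)$, so $-\beta\theta = -\tfrac{\pi}{2}\beta + \alpha + O(1/\beta)$. The $+\alpha$ here cancels the $-\alpha$ above. Collecting everything, $\Real\log\Gamma(z) = (\alpha-\tfrac12)\log R - \tfrac{\pi}{2}\beta + \tfrac12\log 2\pi + o(1)$. Exponentiating and using $\log R = \tfrac12\log(\alpha^2+\beta^2)$ gives $|\Gamma(\alpha+i\beta)| \simeq (2\pi)^{1/2}(\alpha^2+\beta^2)^{(2\alpha-1)/4}e^{-\pi\beta/2}$. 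For $\beta\to-\infty$ one uses $\theta = -\tfrac{\pi}{2} - \alpha/\beta + O(1/\beta^3)$ (or simply the conjugation symmetry $\overline{\Gamma(\bar z)} = \Gamma(z)$, which forces $|\Gamma(\alpha+i\beta)| = |\Gamma(\alpha-i\beta)|$), so the general statement replaces $\beta$ by $|\beta|$ in the exponent, matching Eq.~\eqref{eq::lemma3}.

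The only delicate point is justifying the uniformity of Stirling's formula and the expansion $\theta = \tfrac\pi2 - \alpha/\beta + O(1/\beta^3)$ when $\alpha$ is allowed to vary; since the intended applications take $\alpha$ in a fixed bounded set (indeed $\alpha \in \{0, 1/2, 1\}$ essentially, from the expressions for $\alpha_m$ and the arguments of $\Gamma$ appearing in Propositions~\ref{prop::prop1}), this causes no trouble — one can absorb everything into the $\simeq$ notation, which the paper reads as ``asymptotically equal.'' Alternatively, and perhaps more cleanly, I would invoke the classical Stirling-type estimate $|\Gamma(\alpha+i\beta)| \sim \sqrt{2\pi}\,|\beta|^{\alpha-1/2}e^{-\pi|\beta|/2}$ as $|\beta|\to\infty$ (a standard consequence of the reflection/duplication formulae or of the integral representation), and then observe that $|\beta|^{\alpha-1/2}$ and $(\alpha^2+\beta^2)^{(2\alpha-1)/4} = (\alpha^2+\beta^2)^{(\alpha-1/2)/2}$ have ratio $(1 + \alpha^2/\beta^2)^{(\alpha-1/2)/2} \to 1$, so the two forms agree asymptotically. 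This reduces the lemma to citing a textbook fact plus a one-line ratio computation, which I expect is essentially what the referenced appendix of \cite{DEShaw2020JCP} does.
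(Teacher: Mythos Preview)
Your derivation is correct: Stirling's formula $\log\Gamma(z) = (z-\tfrac12)\log z - z + \tfrac12\log(2\pi) + O(1/|z|)$, taking real parts with $z=\alpha+i\beta$, and expanding $\arg z = \tfrac\pi2 - \alpha/\beta + O(\beta^{-3})$ so that the $-\alpha$ from $\Real(-z)$ cancels against the $+\alpha$ from $-\beta\theta$, is exactly the clean way to obtain the stated asymptotic; the conjugation symmetry to get $|\beta|$ is also the right move.

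As for comparison with the paper: there is nothing to compare. The paper does not prove this lemma at all --- immediately after stating it, the authors write ``The proof of this lemma can be found in the Appendix of Ref.~\cite{DEShaw2020JCP}'' and move on. Your final paragraph already anticipates this, and indeed what you have written (either the Stirling computation or the reduction to the textbook form $|\Gamma(\alpha+i\beta)|\sim\sqrt{2\pi}\,|\beta|^{\alpha-1/2}e^{-\pi|\beta|/2}$ plus the one-line ratio check) is precisely the kind of argument that appendix contains. So your proposal is not merely an alternative route; it \emph{is} the proof the paper defers to a citation.
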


Using Lemma \ref{lemma:Gamma} with $\alpha=1/2$ and $\beta=-m\pi/\log b$,
\begin{equation}\label{eq::Cn}
\left|\mathcal{C}(m)\right|= \dfrac{1}{\sqrt{\pi}}\left|\Gamma\left(\frac{1-\alpha_m}{2}\right)\right|\simeq \sqrt{2}e^{\frac{-m\pi^2}{2\log b}}
\end{equation}
holds for $b\rightarrow 1$. Combining Eqs.~\eqref{eq::Cn} and \eqref{eq::K1kk}, one has
\begin{equation}\label{eq::final1}
\begin{split}
E_{T}&=\frac{1}{2V}\sum_{\bm{k}\neq \bm{0}}|\rho(\bm{k})|^2\sum_{m\neq 0}\mathcal{C}(m)\widetilde{\phi}_m(k)\\
&\simeq \frac{2\sqrt{2}\pi}{V}\sum_{\bm{k}\neq \bm{0}}\sum_{m\neq 0}\dfrac{e^{\frac{-m\pi^2}{2\log b}}}{k^{2+\alpha_m}}|\rho(\bm{k})|^2\left[\mathcal{I}_m-\mathcal{J}_{m}(k)\right].
%&\simeq \sum_{m\neq 0}\sqrt{2}e^{\frac{-m\pi^2}{2\log b}}\sum_{\bm{k}\neq \bm{0}}\dfrac{2\pi}{Vk^{2+\alpha_m}}|\rho(\bm{k})|^2\left[\mathcal{I}_m-\mathcal{J}_{m}(k)\right].
\end{split}
\end{equation}
Recall that the definitions of $\mathcal{I}_m$ and $\mathcal{J}_{m}(k)$ are given in Eq.~\eqref{eq::Im}. The convergence of series given in Eq.~\eqref{eq::final1} is of significance, and is proved in Theorem~\ref{the::conver}.
\begin{theorem}
    \label{the::conver}
The series representation of $E_T$ provided in Eq.~\eqref{eq::final1} converges under the condition of charge neutrality and the specified tinfoil boundary condition.
\end{theorem}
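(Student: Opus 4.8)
The plan is to prove convergence one index at a time: first the inner sum over $m$ for each fixed $\bm{k}\neq\bm{0}$, then the outer sum over $\bm{k}$. For the $m$-sum the point is that $|\mathcal{C}(m)|$ decays exponentially while $\widetilde{\phi}_m(k)$ grows only polynomially in $|m|$. By Eq.~\eqref{eq::Cn} (Lemma~\ref{lemma:Gamma} with $\alpha=1/2$), $|\mathcal{C}(m)|\simeq\sqrt{2}\,e^{-|m|\pi^2/(2\log b)}$. Since $\alpha_m$ is purely imaginary, $|k^{2+\alpha_m}|=k^2$ and $|x^{\alpha_m}|=1$, so $|\mathcal{J}_m(k)|\leq\int_0^{kr_c}|\sin x|\,dx\leq k r_c$, while a second application of Lemma~\ref{lemma:Gamma} (with $\alpha=1$) shows that the factor $e^{-\pi^2|m|/\log b}$ in $|\Gamma(1+\alpha_m)|$ cancels the growth of $\cosh(m\pi^2/\log b)$, leaving $|\mathcal{I}_m|=O(|m|^{1/2})$. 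Hence $\sum_{m\neq 0}|\mathcal{C}(m)|\,|\widetilde{\phi}_m(k)|\lesssim\frac{4\pi}{k^2}\sum_{m\neq 0}e^{-|m|\pi^2/(2\log b)}\bigl(C\,|m|^{1/2}+k r_c\bigr)<\infty$, so the series defining $\widetilde{T}(k)$ converges absolutely for every $\bm{k}\neq\bm{0}$; the same bound gives $|\widetilde{T}(k)|\lesssim(1+k)k^{-2}$, and inspecting the $k\to0$ limit shows $\widetilde{T}(k)$ stays bounded near $\bm{k}=\bm{0}$.

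For the outer sum $\frac{1}{2V}\sum_{\bm{k}\neq\bm{0}}|\rho(\bm{k})|^2\widetilde{T}(k)$ I would first extract the large-$k$ behaviour of $\widetilde{T}(k)$. Using $\int_0^\infty\sin x\,x^{\alpha_m}\,dx=\Gamma(1+\alpha_m)\cos(\pi\alpha_m/2)=\mathcal{I}_m$ (the analytic continuation of the Mellin transform, valid since $\mathrm{Re}\,\alpha_m=0\in(-1,1)$) collapses the two pieces of $\widetilde{\phi}_m(k)$ into a single remainder $\mathcal{I}_m-\mathcal{J}_m(k)=\int_{kr_c}^\infty\sin x\,x^{\alpha_m}\,dx$; two integrations by parts there and term-by-term summation over $m$ (legitimate because $\sum_m|\mathcal{C}(m)|\,|\alpha_m|^j<\infty$) give
\begin{equation*}
\widetilde{T}(k)=\frac{4\pi\,g(r_c)\cos(kr_c)}{k^{2}}+O\!\left(\frac{1}{k^{3}}\right),\qquad g(r_c):=1-r_c\!\!\sum_{\ell\in\mathbb{Z}}\!w_\ell e^{-r_c^2/s_\ell^2},
\end{equation*}
where by the $C^0$ condition~\eqref{eq::C0} the constant is $g(r_c)=-r_c\bigl(\sum_{\ell\le -1}+\sum_{\ell\ge M+1}\bigr)w_\ell e^{-r_c^2/s_\ell^2}$, i.e.\ exactly the contribution of the dropped Gaussians at the cutoff. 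Equivalently, $\widetilde{T}(k)$ is the Fourier transform of the real-space residual $T(r)$ of Eq.~\eqref{eq::K1}, whose tail obeys $|T(r)|\lesssim e^{-\pi^2/(2\log b)}/r$ by~\eqref{eq::pointwiseerror}.

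The last step assembles everything. Because $g(r_c)\neq0$ in general, $\widetilde{T}(k)$ decays only like $k^{-2}$, so $\sum_{\bm{k}}|\rho(\bm{k})|^2|\widetilde{T}(k)|=\infty$ and the series is only conditionally convergent — which is exactly why the two hypotheses are needed. Charge neutrality gives $\rho(\bm{0})=\sum_i q_i=0$, so the omitted $\bm{k}=\bm{0}$ term has weight $|\rho(\bm{0})|^2=0$, and expanding $\rho(\bm{k})=i\bm{k}\!\cdot\!\sum_i q_i\bm{r}_i+O(k^2)$ even gives $|\rho(\bm{k})|^2=O(k^2)$ as $\bm{k}\to\bm{0}$, ruling out any infrared obstruction. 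The tinfoil boundary condition supplies the summation convention (spheres $0<|\bm{k}|<\Lambda$, $\Lambda\to\infty$, equivalently a Gaussian $e^{-\eta k^2}$ regularization with $\eta\downarrow0$); one then splits $\widetilde{T}(k)$ into its Coulomb-like head $4\pi g(r_c)k^{-2}\cos(kr_c)$ and the $O(k^{-3})$ remainder and handles each by summation by parts over spherical shells, using $|\rho(\bm{k})|\le\sum_i|q_i|$. Equivalently, $E_T$ coincides with the conditionally convergent lattice sum $\frac12\sum_{i,j}\sum_{\bm{n}}{}'\,q_i q_j\,T(|\bm{r}_i-\bm{r}_j+\bm{n}L|)$, whose convergence under charge neutrality is the classical fact for Ewald-type sums with a $1/r$-decaying kernel. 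I expect this last step — turning ``absolutely convergent in $m$, conditionally convergent in $\bm{k}$'' into a rigorous statement, i.e.\ reproducing the Ewald dipole/background cancellations for the regularized $\bm{k}$-sum — to be the main obstacle; everything before it is bookkeeping with Gamma-function asymptotics and integration by parts.
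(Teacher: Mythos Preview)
Your proposal is correct and follows essentially the same route as the paper: you extract the large-$k$ asymptotic $\widetilde{T}(k)\sim 4\pi\,\mathrm{const}\cdot\cos(kr_c)/k^2$ (the paper does this per $m$-mode via the hypergeometric closed form~\eqref{eq::closed} and its asymptotics~\eqref{eq:asympt}, you via integration by parts after summing over $m$), recognize this as the Fourier transform of $r^{-1}H(r-r_c)$, and then invoke charge neutrality plus the tinfoil convention to conclude that the resulting Coulomb-beyond-cutoff energy converges. The only cosmetic difference is that you track the prefactor $g(r_c)$ explicitly, while the paper simply drops the $m$-dependent constants citing the exponential decay of $\mathcal{C}(m)$.
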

\begin{proof}
	Since $k^{-(2+\alpha_m)}|\rho(\bm{k})|^2\left[\mathcal{I}_m-\mathcal{J}_{m}(k)\right]$ has no singularity in the case of $k$ and $m\neq 0$, the convergence of series should be determined by the behavior of $k$ approaching infinity. Recalling Eq.~\eqref{eq::Im} and by applying Lemma~\ref{lemma:Gamma}, one has an asymptotic estimate of $\mathcal{I}_{m}$, 
\begin{equation}\label{eq::estimateIm}
\mathcal{I}_{m}\simeq \sqrt{\dfrac{\pi}{2}}\left(1+|\alpha_m|^2\right)^{\frac{1}{4}}.
\end{equation} 
Note that the integral term $\mathcal{J}_{m}(k)$ has a closed-form expression
\begin{equation}\label{eq::closed}
\mathcal{J}_{m}(k)=\frac{(kr_c)^{2+\alpha_m}}{2+\alpha_m}\mathscr{H}\left(1+\frac{\alpha_m}{2},\left[\frac{3}{2},2+\frac{\alpha_m}{2}\right],-\frac{(kr_c)^2}{4}\right),
\end{equation}
where $\mathscr{H}(\cdot,\cdot,\cdot)$ represents the generalized hypergeometric function \cite{abramowitz1964handbook}. By Eq.~\eqref{eq::estimateIm} and applying asymptotic analysis \cite{murray2012asymptotic,knottnerus1960approximation} to Eq.~\eqref{eq::closed}, one can find that as $k\rightarrow \infty$,
	\begin{equation}\label{eq:asympt}
	\frac{\mathcal{I}_m-\mathcal{J}_{m}(k)}{k^{\alpha_m}}=r_c^{\alpha_m}\left[\cos(kr_c)+\frac{\alpha_m}{kr_c}\sin(kr_c)+o\left(\frac{1}{k}\right)\right].
	\end{equation}
	By taking the first term and throwing out all prefactors related to $m$ (due to the rapidly decay of exponential factor), Theorem~\ref{the::conver} is equivalent to proving the convergence of
	\begin{equation}
	U_{> r_c}:=\frac{2\pi }{V}\sum_{\bm{k}\neq \bm{0}}\dfrac{\cos(kr_c)}{k^{2}}|\rho(\bm{k})|^2.
	\end{equation} 
	By simple calculation, one can find that the factor $k^{-2}\cos(kr_c)$ is exactly the three-dimensional generalized Fourier transform of $r^{-1}H(r-r_c)$. This implies that $U_{> r_c}$ is nothing but part of the electrostatic energy contributed from the pairs with the distance larger than the cutoff radius $r_c$. The convergences of $U_{> r_c}$ as well as $E_T$ are thus ensured by the charge neutrality condition and the tinfoil boundary condition. 
 %\tcr{We can also mathematically describe the process of spherical summation as
 %\begin{equation}
     %\label{eq::shpere}
     %U_{> r_c}= \frac{2\pi}{V}\left(\sum_{n=1}^{\infty}\dfrac{\cos(\frac{2\pi}{L}\sqrt{n}r_c)}{(\frac{2\pi}{L})^{2}n}\sum_{|\bm{m}|^2=n}\left|\rho(\frac{2\pi\bm{m}}{L})\right|^2\right)
 %\end{equation}
 %where $\bm{m}=(m_1,m_2,m_3)$ denotes an integer vector. Let $r_3(n)$ denote the number of integer triples $(a, b, c)$ such that $a^2 + b^2 + c^2 = n$. Then an estimate of $r_3(n)$ provided by \cite{Bateman1951ON} that
 %\begin{equation}
  %   r_3(n)\simeq O(n^{1/2}\log(4n)\log\log(4n)),
 %\end{equation}
 %which mathematically provides the convergence of $U_{> r_c}$ since the asymptotic order of one-dimensional summation series is $o(n^{-4/3})$.
 %}

\end{proof}

We now proceed to estimate the leading order of $E_T$. We asymptotically approximate the sum over vectors $\bm{k}$ by the integral similar to the technique for the Ewald summation \cite{kolafa1992cutoff},
\begin{equation}\label{eq::approx}
\sum_{\bm{k}\neq \bm{0}}\simeq \dfrac{V}{(2\pi)^3}\int_{\frac{2\pi}{L}}^{\infty}k^2dk\int_{-1}^{1}d\cos\theta\int_{0}^{2\pi}d\varphi,
\end{equation}
where $(k, \theta, \varphi)$ are the spherical coordinates and $\simeq$ indicates asymptotically equal as $L\rightarrow\infty$.  When Eq.~\eqref{eq::approx} is applied to a Fourier series (say Eq.~\eqref{eq::final1}), one can choose a specific $(k, \theta, \varphi)$ so that the coordinate along $\cos\theta$ of $\bm{k}$ is in the direction of a specific vector $\bm{r}$, and $\bm{k}\cdot\bm{r}=kr\cos\theta$. Even for finite $L$, Eq.~\eqref{eq::approx} remains valid for our estimation, as it does not alter the leading decay order. Further discussion on this point is provided in Appendix~\ref{app::discussintegral}.

By Eq.~\eqref{eq::final1}, let us define
\begin{equation}
E_{T,m}(\bm{k}):=\dfrac{2\sqrt{2}\pi e^{\frac{-m\pi^2}{2\log b}}}{Vk^{2+\alpha_m}}|\rho(\bm{k})|^2\left[\mathcal{I}_m-\mathcal{J}_{m}(k)\right]
\end{equation}
so that $E_T=\sum\limits_{\bm{k}\neq \bm{0}}\sum\limits_{m\neq 0}E_{T,m}(\bm{k})$. By performing the integral approximation, one has
\begin{equation}\label{Uerr1}
\begin{split}
\sum_{\bm{k}\neq \bm{0}}E_{T,m}(\bm{k})&\simeq\sqrt{2}e^{\frac{-m\pi^2}{2\log b}}\left(\int_{\frac{2\pi}{L}}^{\frac{|\alpha_{m}|}{r_c}}+\int_{\frac{|\alpha_{m}|}{r_{c}}}^{\infty}\right)(\pi k^{\alpha_m})^{-1}\sum_{i,j}q_iq_j\frac{\sin(kr_{ij})}{kr_{ij}}\left[\mathcal{I}_m-\mathcal{J}_{m}(k)\right]dk\\
&:=\sqrt{2}e^{\frac{-m\pi^2}{2\log b}}(\mathcal{E}_{T,m}+\mathcal{R}_{T,m}).
\end{split}
\end{equation}
The upper limit of integral in $\mathcal{E}_{T,m}$ is $|\alpha_m|/r_c$ with $|\alpha_m|\sim (\log b)^{-1}$, so we can consider the asymptotic behaviors of $\mathcal{I}_m$ and $\mathcal{J}_m(k)$ as $b\rightarrow 1$. It is clear by Eq.~\eqref{eq::Im} that $\mathcal{I}_m$ is scaled as $O((\log b)^{-1/2})$. However, by applying the integral by parts to Eq.~\eqref{eq::Im}, the high oscillation factor $x^{\alpha_m}$ will lead an $(\alpha_m)^{-1}=O(\log b)$ decay order of $\mathcal{J}_m(k)$ for $k\in[2\pi/L,|\alpha_m|/r_c]$. Hence one has
\begin{equation}\label{eq::Etn2}
\mathcal{E}_{T,m}\simeq O(|\alpha_{m}|\mathcal{I}_{m})=O((\log b)^{-3/2}) \quad\text{as}\quad b\rightarrow 1.
\end{equation}
Next, the asymptotic expansion given by Eq.~\eqref{eq:asympt} is valid on the interval $[|\alpha_m|/r_c,\infty)$, so that the leading order of the remainder term $\mathcal{R}_{T,m}$ reads
\begin{equation}\label{eq::3.30}
\mathcal{R}_{T,m}\simeq \frac{r_c^{\alpha_m}}{\pi}\sum_{i,j}q_iq_j\int_{\frac{|\alpha_{m}|}{r_c}}^{\infty}\frac{\sin(kr_{ij})}{kr_{ij}}\cos(kr_c)dk= O\left(|\alpha_{m}|^{-1}\right)=O(\log b)\quad\text{as}\quad b\rightarrow 1.
\end{equation}
We also study the properties of $\mathcal{J}_{m}(k)$ by auxiliary numerical calculation. It is observed from Fig.~\ref{fig:IInk} that in typical cases there are many oscillations of $\mathcal{J}_{m}(k)$ around $\mathcal{I}_m$ for $k>|\alpha_m|/r_c$. When $k\leq|\alpha_m|/r_c$, the value of $\mathcal{I}_{m}(k)$ is significantly larger than that of $\mathcal{J}_{m}(k)$. This implies that $\mathcal{R}_{T,m}$ is a higher-order infinitesimal as $b\rightarrow 1$ in comparison to $\mathcal{E}_{T,m}$, in consistent with our theoretical analysis. Substituting Eqs.~\eqref{eq::Etn2} and \eqref{eq::3.30} into Eq.~\eqref{Uerr1}, one has
\begin{equation}\label{eq::Uerr1}
E_{T}=\sum_{\bm{k}\neq\bm{0}}\sum_{m\neq 0}E_{T,m}(\bm{k})\simeq O\left((\log b)^{-3/2}e^{\frac{-\pi^2}{2\log b}}\right)\quad\text{as}\quad b\rightarrow 1.
\end{equation}
If more precise value of $E_T$ needs to be calculated, the integral along with the costly hypergeometric function in Eq.~\eqref{Uerr1} can be numerically evaluated by applying suitable quadratures and interpolation methods. However, the total complexity for evaluating Eq.~\eqref{Uerr1} is still $O(N^2)$ due to the double summation.

\begin{figure*}[ht]	
	\centering
	\includegraphics[width=0.95\textwidth]{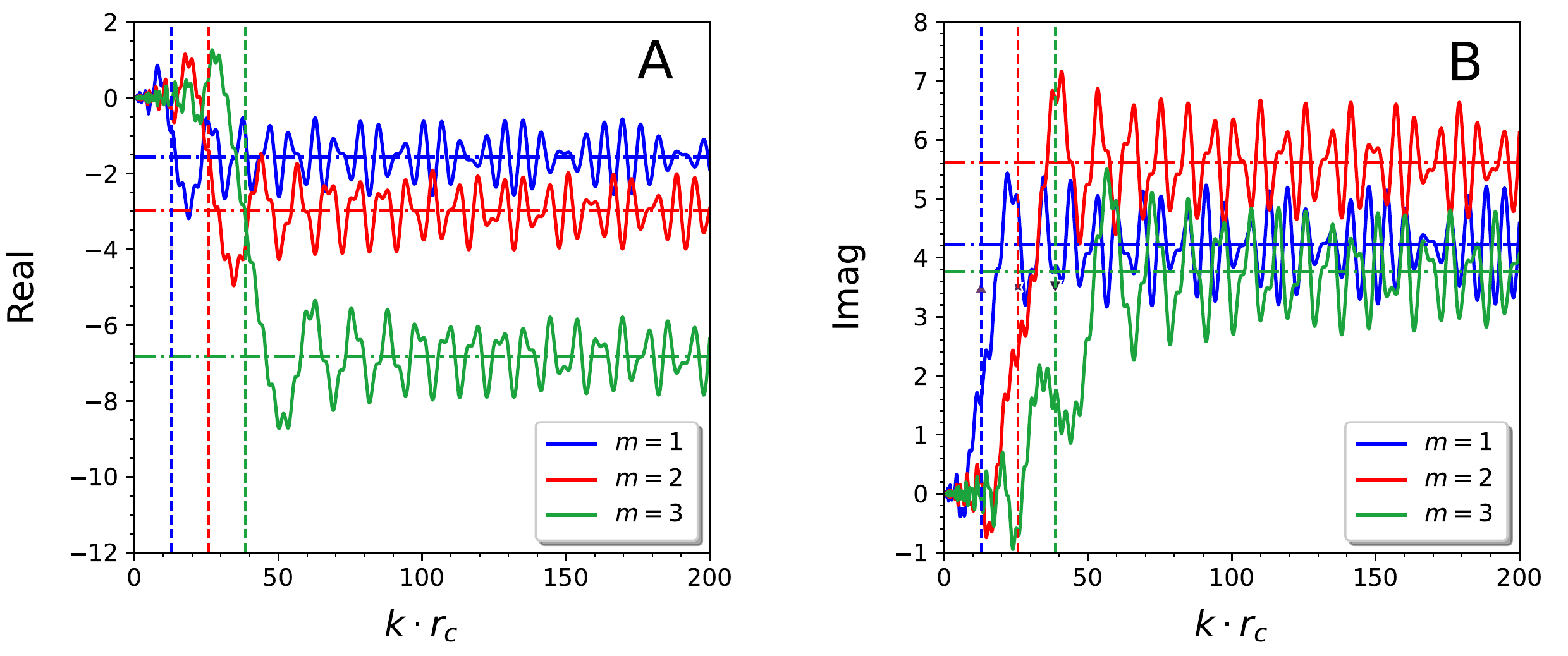}
	\caption{Values of (A) the real part and (B) the imaginary part of $\mathcal{J}_{m}(k)$ against the value of $kr_c$ for $m=1,2,3$. The dash-dotted lines in (A-B) indicate the real and imaginary parts of $\mathcal{I}_m$, respectively. The dash lines indicate $x\equiv |\alpha_m|$ for different $m$.}
	\label{fig:IInk}
\end{figure*}

\subsection{The estimate of \texorpdfstring{$E_{G}^{\mathrm{up}}$}~}\label{subsec::2}

Next, let us consider the estimation of
\begin{equation}
E_{G}^{\text{up}}=\frac{1}{2V}\sum_{\bm{k}\neq \bm{0}}|\rho(\bm{k})|^2\widetilde{G}_{\text{up}}(k).
\end{equation}
Utilizing Proposition \ref{prop::prop2} and employing the integral approximation Eq.~\eqref{eq::approx} again, one has
\begin{equation}\label{eq::final2}
\begin{split}
E_{G}^{\text{up}}
&\simeq \sum_{i,j}q_iq_j\int_{0}^{\infty}\dfrac{\sin(kr_{ij})}{\pi k r_{ij}}\left[\frac{k^2\log b}{2}\sum_{\ell=M+1}^{\infty} s_\ell^2e^{-s_\ell^2k^2/4} +\frac{2\log b}{\sqrt{\pi}(b-1)s_M }\mathscr{P}(r_c,k)\right]dk\\
&=\sum_{i,j}q_iq_j\left[\frac{\log b}{\sqrt{\pi}}\sum_{\ell=M+1}^{\infty}\frac{e^{-r_{ij}^2/s_{\ell}^2}}{s_\ell}+ \frac{\log b}{\pi(b-1)s_M}\left(\sqrt{\pi}\min\left\{\frac{ r_c}{r_{ij}},1\right\}-\frac{2r_c}{\sqrt{\pi}r_{ij}}\mathcal{T}(r_{ij},r_c)\right)\right]
\end{split}
\end{equation}
where the lower limit of integral starts from zero instead of $2\pi/L$ due to $\lim\limits_{\bm{k}\rightarrow \bm{0}}|\rho(\bm{k})|^2\widetilde{G}_{\text{up}}(k)=0$ resulting from the charge neutrality condition. Here,
\begin{equation}
\mathcal{T}(r_{ij},r_c):=\int_{0}^{\infty}\dfrac{\sin(kr_{ij})}{k}\cos(kr_c)dk=\begin{cases}\dfrac{\pi}{2},\quad&\text{for}~r_{ij}>r_c\\[1em]
\dfrac{\pi}{4},\quad&\text{for}~r_{ij}=r_c\\[1em]
0,\quad&\text{for}~r_{ij}<r_c\end{cases}
\end{equation}
represents a type of Borwein integral~\cite{borwein2001some}. Indeed, the above Borwein integral is undefined at $r_{ij}=r_c$, and it thus can be taken as any value. Here the half of $\pi/2$ is a general choice~\cite{katznelson2004introduction}. %\tcr{It is noted that the start point of the integral in the second formula in Eq.~\eqref{eq::final2} is zero instead of $2\pi/L$. This comes from $\lim\limits_{\bm{k}\rightarrow \bm{0}}|\rho(\bm{k})|^2\widetilde{G}_{\text{up}}(k)=0$ which is due to the charge neutrality condition}. 
Considering that each component in Eq.~\eqref{eq::final2} is scaled as $O\left(1/s_M\right)$ and $s_{M}=O(b^M)$, it follows that $|E_{G}^{\text{up}}|\simeq O(1/b^M)$.

%Note that the second step in Eq.~\eqref{eq::final2} holds due to the approximation with Eq.~\eqref{eq::approx}, and the last step is derived by using  Assumption~\ref{ass:1}. 

\subsection{The estimate of \texorpdfstring{$E_{G}^{\mathrm{down}}$}~}\label{subsec::3}
Consider
\begin{equation}
\label{eq::EGDown}
E_{G}^{\text{down}}=\frac{1}{2V}\sum_{\bm{k}\neq \bm{0}}|\rho(\bm{k})|^2\widetilde{G}_{\text{down}}(k).
\end{equation}
By Proposition \ref{prop::prop3} and applying the integral approximation Eq.~\eqref{eq::approx}, one similarly has
\begin{equation}\label{eq::3.35}
E_{G}^{\text{down}}\simeq \dfrac{1}{2\pi^2}\sum_{\ell=-\infty}^{-1}\left[w_{\ell}e^{-r_c^2/s_{\ell}^2}\sum_{i,j}q_iq_j\int_{0}^{\infty}\dfrac{\sin(kr_{ij})}{kr_{ij}}\beta_{\ell}(k)dk\right],
\end{equation}
where the integral can be explicitly computed by keeping the leading term,
\begin{equation}\label{eq::3.36}
\int_{0}^{\infty}\dfrac{\sin(kr_{ij})}{kr_{ij}}\beta_{\ell}(k)dk=\begin{cases}
0,\quad &r_{ij}\leq r_c,\\\\
\dfrac{\pi r_c}{2r_{ij}}e^{-2r_c(r_{ij}-r_c)/s_{\ell}^2}, &r_{ij}>r_c.
\end{cases}
\end{equation}
Substituting Eq.~\eqref{eq::3.36} into Eq.~\eqref{eq::3.35} yields 
\begin{equation}\label{eq::final3}
E_{G}^{\text{down}}\simeq\dfrac{r_c}{4\pi }\sum_{\ell=-\infty}^{-1}\left[w_{\ell}e^{-r_c^2/s_{\ell}^2}\sum_{i,j}\frac{q_iq_j}{r_{ij}}e^{-2r_c(r_{ij}-r_c)/s_{\ell}^2}H(r_{ij}-r_{c})\right]
\end{equation}
which is scaled as $O(w_{-1}e^{-r_c^2/s_{-1}^2})$. Since the factor $e^{-2r_c(r_{ij}-r_c)/s_{\ell}^2}$ decays rapidly as $r_{ij}>r_c$ and $H(r_{ij}-r_c)$ has support only when $r_{ij}>r_c$, the main contribution of $E_{G}^{\text{down}}$ comes from neighbours within a small neighborhood outside the cutoff $r_c$ of each particle $i$. This is not surprising because the Gaussian $e^{-r^2/s_{\ell}^2}$ has rapid decay in real space when $\ell\leq-1$ and $r\geq r_c$. 

Combining Eq.~\eqref{eq::Uerr} with the estimation of $E_{T}$, $E_{G}^{\text{up}}$ and $E_{G}^{\text{down}}$ in Section~\ref{subsec::1}-\ref{subsec::3}, we finish the proof of Theorem~\ref{thm::thm4} for $U_{\text{err}}$. The proof for $\bm{F}_{\text{err}}(\bm{r}_i)$ is similar and is given in Appendix~\ref{app::Ferr}.

\section{Extension to \texorpdfstring{$C^1$}~-continuity and further discussions}\label{subsec::extension}
Until now, our main findings and proofs, as discussed in Sections~\ref{subsec::mainresult} and \ref{proof}, have focused solely on $C^0$-continuous u-series. However, since one needs to use force for MD time stepping, it is suggested in~\cite{DEShaw2020JCP} that $C^1$-continuous u-series satisfying Eqs.~\eqref{eq::C0}-\eqref{eq::C1} is practically more advantageous as the force continuity is ensured as well. Theorem \ref{thm::C1continuous} extends the results of $C^0$-continuous u-series, Theorem~\ref{thm::thm4}, to the $C^1$-continuous case. 
\begin{theorem}
\label{thm::C1continuous}
Assume that $s_{M+1}\gg L\geq 2r_c$ and $s_0<r_c$. For the $C^1$-continuous u-series, $U_{\emph{err}}$ and $\bm{F}_{\emph{err}}(\bm{r}_i)$ hold the following estimates,
\begin{equation}\label{eq::errorfinalC1}
	\begin{split}
	|U_{\emph{err}}|&\simeq O\left((\log b)^{-3/2}e^{-\pi^2/2\log b}+b^{-M}+w_{-1}e^{-r_c^2/s_{-1}^2}-(\omega-1)w_{0}e^{-r_c^2/s_0^2}\right),\\[1.em]
	|\bm{F}_{\emph{err}}(\bm{r}_i)|&\simeq O\left((\log b)^{-3/2}e^{-\pi^2/2\log b}+b^{-3M}+\frac{w_{-1}}{(s_{-1})^{2}}e^{-r_c^2/s_{-1}^2}-\frac{(\omega-1)w_{0}}{(s_0)^{2}}e^{-r_c^2/s_0^2}\right),
	\end{split}
	\end{equation}
	where $\simeq$ indicates ``asymptotically equal'' as $b\rightarrow 1$.
\end{theorem}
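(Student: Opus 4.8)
\textbf{Proof proposal for Theorem~\ref{thm::C1continuous}.}

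The plan is to follow the same machinery developed in Section~\ref{proof} for the $C^0$-continuous case, tracking only the modifications introduced by the rescaling $w_0 \to \omega w_0$ of the narrowest Gaussian. First I would rewrite the $C^1$-continuous far part as $\mathcal{F}_{b,C^1}^{\sigma}(r) = \mathcal{F}_{b}^{\sigma}(r) + (\omega-1)w_0 e^{-r^2/s_0^2}$, so that the error kernel becomes $K_{C^1}(r) = K(r) - (\omega-1)w_0 e^{-r^2/s_0^2}H(r-r_c)$, where $K(r)$ is the $C^0$ kernel from Eq.~\eqref{kernel:K}. Since all the Fourier-space identities of Proposition~\ref{prop::spectral} and Lemma~\ref{lemma1} are linear in the kernel, the Fourier transform splits as $\widetilde{K}_{C^1}(k) = \widetilde{T}(k) + \widetilde{G}_{\text{up}}(k) + \widetilde{G}_{\text{down}}(k) - (\omega-1)w_0 \widetilde{g}_0(k)$, where $\widetilde{g}_0(k)$ is the Fourier transform of $e^{-r^2/s_0^2}H(r-r_c)$. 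The contributions $E_T$, $E_G^{\text{up}}$, $E_G^{\text{down}}$ carry over verbatim from Sections~\ref{subsec::1}--\ref{subsec::3}, giving the first three terms in Eq.~\eqref{eq::errorfinalC1}; note that the hypothesis $s_0 < r_c$ (replacing $s_{-1}<r_c$) is exactly what is needed so that the $\ell=-1$ Gaussian still sits at the edge of its effective range and $E_G^{\text{down}}$ retains its $O(w_{-1}e^{-r_c^2/s_{-1}^2})$ scaling.

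Next I would estimate the new term $E_0 := -\frac{(\omega-1)w_0}{2V}\sum_{\bm{k}\neq\bm0}|\rho(\bm{k})|^2 \widetilde{g}_0(k)$. The transform $\widetilde{g}_0(k)$ is structurally identical to the $\ell = -1$ summand appearing in $\widetilde{G}_{\text{down}}(k)$ of Proposition~\ref{prop::prop3}: applying Lemma~\ref{lemma1} and the same stationary-phase/integration-by-parts reduction used in Appendix~\ref{pf::prop3}, one gets $\widetilde{g}_0(k) = \frac{4\pi}{k^2} e^{-r_c^2/s_0^2}\beta_0(k)[1+o(1)]$ with $\beta_0(k)$ given by Eq.~\eqref{eq::2.25} at $\ell=0$ (this is where $s_0<r_c$ is used, to justify the expansion in $(r_c/s_0)^{-2}$). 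Then the integral approximation Eq.~\eqref{eq::approx} and the Borwein-type integral identity Eq.~\eqref{eq::3.36} produce, exactly as in Eq.~\eqref{eq::final3},
\begin{equation}
E_0 \simeq -\frac{(\omega-1)w_0 r_c}{4\pi} e^{-r_c^2/s_0^2}\sum_{i,j}\frac{q_iq_j}{r_{ij}} e^{-2r_c(r_{ij}-r_c)/s_0^2} H(r_{ij}-r_c),
\end{equation}
which is scaled as $O\bigl((\omega-1)w_0 e^{-r_c^2/s_0^2}\bigr)$. Summing with the $C^0$ estimates gives the claimed bound for $U_{\text{err}}$; the force estimate then follows by the same differentiation argument as in Appendix~\ref{app::Ferr}, which inserts an extra factor $\sim s_0^{-2}$ on the $\ell=0$ term (mirroring the $s_{-1}^{-2}$ on the $\ell=-1$ term) and changes $b^{-M}$ to $b^{-3M}$.

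The main obstacle I anticipate is not the algebra but the bookkeeping of the cutoff radius: in the $C^1$-continuous scheme $r_c$ and $\omega$ are \emph{jointly} determined by the coupled system Eqs.~\eqref{eq::C0}--\eqref{eq::C1}, so $r_c$ differs from its $C^0$ value and one must verify that the asymptotic estimates are uniform in the resulting $(r_c,\omega)$ as $b\to1$. Concretely, I would need to check that $\omega$ stays bounded (and bounded away from values that would make $(\omega-1)w_0 e^{-r_c^2/s_0^2}$ anomalously large or cause cancellation with the $\ell=-1$ term), and that the perturbed $r_c$ still satisfies $s_0 < r_c \leq L/2$ so that Assumption~\ref{ass:1}-type reasoning applies to both $E_G^{\text{down}}$ and $E_0$. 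Once the $(r_c,\omega)$ dependence is controlled, every remaining step is a direct transcription of Sections~\ref{subsec::1}--\ref{subsec::3} and Appendix~\ref{app::Ferr} with one additional linear term.
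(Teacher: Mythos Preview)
Your proposal is correct and follows essentially the same route as the paper: the paper absorbs the extra $-(\omega-1)w_0 e^{-r^2/s_0^2}H(r-r_c)$ into a modified $G_{\text{down}}$ (Eq.~(4.3)) and then reruns Proposition~\ref{prop::prop3} and Section~\ref{subsec::3} verbatim, which is exactly your ``separate $E_0$ term'' argument up to relabelling, and the force bound is obtained by the same gradient computation you outline. Your identification of the role of $s_0<r_c$ (needed for the erfc asymptotics in Lemma~\ref{lemma::erfc} at $\ell=0$) matches the paper's remark after the proof; your additional caution about uniformity in the jointly determined pair $(r_c,\omega)$ goes beyond what the paper explicitly checks, but does not conflict with it.
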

\begin{proof}
Similar to Eq.~\eqref{eq::Uerr}, the error in electrostatic energy can be divided into three parts: $U_{\text{err}}=E_{T}+E_{G}^{\text{up}}+E_{G}^{\text{down}}$. In the $C^1$-continuous u-series, an extra parameter $\omega$ is introduced to fit the force continuity condition Eq.~\eqref{eq::C1}. Following the proof in Section~\ref{proof}, it is clear that this parameter does not alter the decay order of $E_{T}$ and $E_{G}^{\text{up}}$, but only affects $E_{G}^{\text{down}}$. By replacing Eq.~\eqref{eq::Gdowndefi} with
\begin{equation}
G_\textup{down}(r)= \left[\sum_{\ell=-\infty}^{-1}w_\ell e^{-r^2/s_\ell^2}-(\omega-1)w_{0}e^{-r^2/s_0^2}\right] H(r-r_c) 
\end{equation}
and following the proofs of Proposition~\ref{prop::prop3} and Section~\ref{subsec::3}, one can obtain:
\begin{equation}\label{eq::EgDownC1}
E_{G}^{\text{down}}\simeq \frac{r_c}{4\pi}\left[\sum_{\ell=-\infty}^{-1}w_{\ell}e^{-r_c^2/s_{\ell}^2}g_{\ell}(r_c)-(\omega-1)w_0e^{-r_c^2/s_{0}^2}g_{0}(r_c)\right],
\end{equation}
where
\begin{equation}\label{eq::glrc}
g_{\ell}(r_c):=\sum_{i,j}\frac{q_iq_j}{r_{ij}}e^{-2r_c(r_{ij}-r_c)/s_{\ell}^2}H(r_{ij}-r_c).
\end{equation}
Eq.~\eqref{eq::EgDownC1} indicates that $|E_{G}^{\text{down}}|\simeq O(w_{-1}e^{-r_c^2/s_{-1}^2}-(\omega-1)w_{0}e^{-r_c^2/s_0^2})$. By computing the gradient of $E_{G}^{\text{down}}$ and following the analysis in Eqs.~\eqref{eq::AppC1}-\eqref{eq::force1}, one further has:
\begin{equation}\label{eq::FGDownC1}
|\bm{F}_{G}^{\text{down}}(\bm{r}_i)|\simeq O\left(\frac{w_{-1}}{(s_{-1})^{2}}e^{-r_c^2/s_{-1}^2}-\frac{(\omega-1)w_{0}}{(s_0)^{2}}e^{-r_c^2/s_0^2}\right).
\end{equation}
The decay order of the other two terms, $\bm{F}_{T}$ and $\bm{F}_{G}^{\text{up}}$, remains unaffected by $\omega$. The proof is finished.
\end{proof}
Note that an additional condition $s_0<r_c$, compared to Assumption~\ref{ass:1}, is required in Theorem~\ref{thm::C1continuous} to ensure the validity of the asymptotic expansion Eq.~\eqref{eq::ErfcExp}. Comparing Theorem~\ref{thm::thm4} and Theorem~\ref{thm::C1continuous}, one observes that $C^1$-continuous u-series may yield lower truncation error than $C^0$-continuous u-series as $\omega\rightarrow 1$. Corollary~\ref{corr::C1continuous} presents a rule to set parameters of $C^1$-continuous u-series to guarantee a user-defined tolerance of force in MD. 
\begin{corollary}\label{corr::C1continuous}
Given an error tolerance $\varepsilon>0$, one first sets $b$ and $M$ according to Corollary~\ref{coroll:parameter}, and finds proper $\omega$ and $r_c$ so that
$|\bm{F}_{G}^{\emph{down}}(\bm{r}_i)|\sim\varepsilon/3$ using Eq.~\eqref{eq::FGDownC1}. The error of $C^1$-continuous u-series is thus $\sim\varepsilon$ uniformly over all $r\in\Omega$.
\end{corollary}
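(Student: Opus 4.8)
The plan is to treat Corollary~\ref{corr::C1continuous} as a parameter-bookkeeping consequence of Theorem~\ref{thm::C1continuous}. By the second line of Eq.~\eqref{eq::errorfinalC1}, as $b\to1$ the force error $|\bm{F}_{\text{err}}(\bm{r}_i)|$ is, up to an $O(1)$ constant, dominated by $A(b)+b^{-3M}+D(\omega,r_c)$, where $A(b):=(\log b)^{-3/2}e^{-\pi^2/(2\log b)}$ is the aliasing term, $b^{-3M}$ the high-$\ell$ truncation term, and $D(\omega,r_c):=w_{-1}s_{-1}^{-2}e^{-r_c^2/s_{-1}^2}-(\omega-1)w_0 s_0^{-2}e^{-r_c^2/s_0^2}$ the low-$\ell$/cutoff term, which by Eq.~\eqref{eq::FGDownC1} is (the magnitude of) $\bm{F}_G^{\text{down}}(\bm{r}_i)$ up to its implied constant. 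Since none of the estimates feeding Theorem~\ref{thm::C1continuous} uses a mean-field or homogeneity hypothesis, each of these bounds is uniform in the particle positions in $\Omega$ and in the charges. Hence it suffices to choose parameters forcing each of $A$, $b^{-3M}$, $D$ to be $\sim\varepsilon/3$; an (asymptotic) triangle inequality then gives $|\bm{F}_{\text{err}}(\bm{r}_i)|\sim\varepsilon$ uniformly, and feeding the same $b,M,\omega,r_c$ into the first line of Eq.~\eqref{eq::errorfinalC1} controls $|U_{\text{err}}|$ simultaneously.

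For $A$ and $b^{-3M}$ I would simply quote Corollary~\ref{coroll:parameter}. The proof of Theorem~\ref{thm::C1continuous} already records that $E_T,\bm{F}_T,E_G^{\text{up}},\bm{F}_G^{\text{up}}$ — hence $A$ and $b^{-3M}$ — are insensitive to $\omega$, so the $C^0$ analysis of Sections~\ref{subsec::1}--\ref{subsec::2} (and its force counterpart) applies verbatim. The change of variable $x=\pi^2/(2\log b)$ turns $A(b)$ into a fixed multiple of $x^{3/2}e^{-x}$, so Lemma~\ref{lem::estimate}, applied with the tolerance rescaled by that multiple and by the prefactor recorded in the estimate of $E_T$, produces the lower bound on $x$ — equivalently the choice $b\sim e^{\pi^2(e-1)/(2ep)}$ with $p$ as stated — for which $A(b)\sim\varepsilon/3$. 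With $b$ so fixed, solving $b^{-3M}=\varepsilon/3$ gives $M\sim(3\log b)^{-1}\log(3\varepsilon^{-1})$, exactly the $M$ of Corollary~\ref{coroll:parameter}, so the neglected $\ell>M$ Gaussians contribute $\sim\varepsilon/3$.

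The substantive step — and the one I expect to be the main obstacle — is handling $D(\omega,r_c)$, because $\omega$ and $r_c$ are not free but are pinned down jointly by the continuity conditions \eqref{eq::C0}--\eqref{eq::C1}. I would take $r_c$ as the primary knob and let $\omega=\omega(r_c)$ be the value forced by \eqref{eq::C1} (equivalently, for a chosen $\omega$ let $r_c$ be the smallest common root of \eqref{eq::C0}--\eqref{eq::C1}). In the regime of interest one has $\omega\to1$, so the correction $-(\omega-1)w_0 s_0^{-2}e^{-r_c^2/s_0^2}$ is subdominant to $w_{-1}s_{-1}^{-2}e^{-r_c^2/s_{-1}^2}$, and along the constraint curve $D$ is asymptotically a strictly decreasing function of $r_c$; solving $w_{-1}s_{-1}^{-2}e^{-r_c^2/s_{-1}^2}\sim\varepsilon/3$ then yields $r_c\sim s_{-1}\sqrt{\log(3w_{-1}s_{-1}^{-2})+\log\varepsilon^{-1}}$, of the same shape as the $C^0$ prescription in Corollary~\ref{coroll:parameter}. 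The care needed here is to verify consistency of the coupled choice: that this $r_c$ still obeys the extra hypothesis $s_0<r_c$ of Theorem~\ref{thm::C1continuous} (so that the expansion \eqref{eq::ErfcExp} underlying the estimate of $E_G^{\text{down}}$ remains valid) and that $\omega(r_c)$ stays close enough to $1$ not to spoil the bound — both hold once $\varepsilon$ is moderately small, since $s_0=\sqrt2\,\sigma$ is a fixed length while the radical in $r_c$ grows with $\log\varepsilon^{-1}$. Assembling the three sub-estimates finishes the proof.
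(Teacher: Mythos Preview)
Your approach is correct and matches the paper's implicit reasoning: the paper states Corollary~\ref{corr::C1continuous} without proof, treating it as an immediate parameter-bookkeeping consequence of Theorem~\ref{thm::C1continuous} and Corollary~\ref{coroll:parameter}, and your triangle-inequality split into the three $\varepsilon/3$ sub-budgets for $A(b)$, $b^{-3M}$, and $D(\omega,r_c)$ is exactly the intended argument. The extra care you take with the joint $(\omega,r_c)$ constraint from Eqs.~\eqref{eq::C0}--\eqref{eq::C1} and the consistency check $s_0<r_c$ goes beyond what the paper spells out but is entirely in the spirit of the surrounding discussion.
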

Theorem~\ref{thm::C1continuous} can be further extended to $C^p$-continuity with $p\geq 2$. For instance, fine-tuning the width $s_0\rightarrow \gamma s_0$, with $\gamma$ being a parameter, of the narrowest Gaussian achieves the $C^2$-continuity   
\begin{equation}
\frac{2}{r_c^3}-\frac{d^2}{dr^2}\mathcal{F}_{b}^{\sigma}(r){\Big{|}}_{r=r_c}=0
\end{equation}
and so forth. However, it was found~\cite{DEShaw2020JCP} that the actual behavior of $C^2$-continuity may not be better than $C^1$-continuity.  
This agrees with our analysis, which indicates that both conditions $r_c>\gamma s_0$ associated with $\omega\rightarrow 1$ should be met. Unfortunately, the values of $r_c$ required for these conditions are impractically large. This issue may become worse for higher-order continuity with $p>2$. Overall, the $C^1$-continuous u-series behaves well and is already sufficient for purposes relevant to MD simulations.
 
It was numerically observed~\cite{DEShaw2020JCP} that $C^p$-continuous u-series exhibit an energy convergence rate of $O(1/r_c^{p+1})$ for $p\in\{0,1,2\}$. Error estimates in this paper also provide insight into this conjecture. Considering the $C^1$-continuous u-series,  Eqs.~\eqref{eq::EgDownC1} and \eqref{eq::glrc} suggest that the primary contribution arises from $i$-$j$ pairs where $r_{ij}$ approaches $r_c$. Let $r_{>}=\min_{i,j}\{r_{ij}|r_{ij}>r_c\}$. By replacing $H(r-r_c)$ with $H(r-r_>)$ in Eq.~\eqref{eq::Gdowndefi} and following the similar procedure of the proof, one can derive
\begin{equation}
\begin{split}
|E_{G}^{\text{down}}|&\simeq \sum_{\ell=-\infty}^{-1}w_{\ell}e^{-r_>^2/s_{\ell}^2}-(\omega-1)w_0e^{-r_>^2/s_0^2}\\
&=\sum_{\ell=-\infty}^{-1}w_{\ell}e^{-r_>^2/s_{\ell}^2}-\frac{1}{r_>}+\sum_{\ell=0}^{\infty}w_{\ell}e^{-r_>^2/s_{\ell}^2}+O\left(\frac{1}{r_c^2}\right)
\end{split}
\end{equation}
by simple Taylor expansion and the definition of $\omega$, where the first three terms form the error of bilateral series approximation Eq.~\eqref{BSA} and the last equality is due to $C^1$-continuity of the u-series at $r_c$. As $b\rightarrow 1$ and $r_>\rightarrow r_c$, the approximation error vanishes, yielding $E_{G}^{\text{down}}\simeq O(1/r_c^2)$. These arguments can be extended to $C^p$-continuous u-series with $p\neq 1$. However, we remark that the $1/r_c^{p+1}$ decay rate is not always observable. For instance, in the NaCl crystalline lattice discussed in Section~\ref{subsec::madelung}, the lattice size satisfies $r_>=2r_c$. As a consequence, $E_{G}^{\text{down}}$ becomes negligible, hence rendering the $1/r_c^{p+1}$ decay unobservable.

It is worth noting that the SOG approximation is particularly useful in constructing fast algorithms because it allows for the separation of variables. For example, the bilateral series is also utilized to construct separated representations in multiresolution methods~\cite{BEYLKIN2007235,beylkin2012multiresolution}. When employing SOG-based methods on $1/r$, a perplexing issue arises: while the periodic summation of $1/r$ is conditionally convergent under the charge neutrality condition, the summation for finite Gaussians is unequivocally absolutely convergent. The existence of this gap in the u-series is not surprising, as the conditionally convergent component only emerges in the zero-frequency Fourier mode~\cite{hu2014infinite}. If tinfoil boundary conditions are set for $r\rightarrow \infty$, with dielectric permittivity approaching infinity and the zero-frequency mode vanishing, the remainder term becomes convergent and can be accurately represented by the SOG series. In cases of finite permittivity as $r\rightarrow \infty$, the value of zero-frequency mode is  consistent with that of $1/r$ and  depends on the specific summation order \cite{beylkin2012multiresolution}, requiring the addition of the proper correction to the u-series. Further discussions can be found in Section~3.2 of~\cite{Liang2023SISC}.

\section{Closed formulae}\label{sec::closed}
The convergence rate given above is valid for arbitrary systems, whereas the prefactors depend on the particle configuration. Determining these prefactors requires expensive computational cost. To provide a strategy for optimizing parameter setup in simulations, we proceed to derive closed-form formulae of cutoff errors by estimating $\delta U=U_{\text{err}}$ and the standard deviation of force 
\begin{equation}\label{eq::deltaFerr}
\delta \bm{F}_{\text{err}}=\sqrt{\frac{1}{N}\sum_{i=1}^{N} \bm{F}^2_{\text{err}}(\bm{r}_i)}
\end{equation}
which can be described as the average over all particles. In order to achieve this, we use the ideal-gas assumption, i.e., particles with distance greater than the cutoff radius $r_c$ are assumed to be uniformly distributed without correlation. This assumption is also used for error estimate of the Ewald summation \cite{kolafa1992cutoff}. The detailed description of the ideal-gas assumption is provided in Appendix \ref{Sec::Uncorrelation}. Discussions in this
section focuses on the $C^0$-continuous u-series, and the extension to $C^p$-continuous cases is straightforward.

We start with the approximating error $E_T$ of the trapezoidal rule. By Proposition \ref{prop::prop1} and using Eq.~\eqref{eq::K1}, one has,
\begin{equation}\label{eq::ETfinal}
E_T= \frac{1}{2}\sum_{i=1}^{N}\sum_{m\neq 0}\mathcal{C}(m)\sum_j{'} q_i q_j\zeta_{ij}(m)
\end{equation}
where $\zeta_{ij}(m):=r_{ij}^{\alpha_m-1}H(r_{ij}-r_c)$ and 
and the prime represents that the summation runs over all particles in the box and their periodic images at $\bm{r}_j+\bm{n}L$ for $\bm{n}$ being integer vectors. By the ideal-gas assumption in Appendix \ref{Sec::Uncorrelation}, this part of energy error can be approximated by
\begin{equation}\label{4.3eq::Et2}
\delta E_T^2\approx \left[\frac{1}{2}\sum_{m\neq 0}\mathcal{C}(m)\right]^2\dfrac{Q^2}{V} \int_{r_c}^{\infty}4\pi r^{2\alpha_m} dr,
\end{equation}
with $Q=\sum_{i=1}^{N}q_i^2$. Here $\alpha_m$ is a pure imaginary number, and the integral in Eq. \eqref{4.3eq::Et2} does not converge. Physically, this is due to that the long-range part is included in the kernel $T(\cdot)$. 
However, one can treat the integral $\int_{r_c}^{\infty} r^{2\alpha_m} dr$ as the half-Mellin transform such that the integral can be mathematically interpreted in the sense of meromorphic continuations  \cite{zagier2006mellin}
\begin{equation}\label{eq::GM}
\int_{r_c}^{\infty}4\pi r^{2\alpha_m} dr=-\dfrac{4\pi}{1+2\alpha_m}r_c^{1+2\alpha_m}.
\end{equation}
Actually, by integral by parts, the improper integral can be written as the meromorphic extension Eq.~\eqref{eq::GM} plus some infinite term, and this infinite term can be simply ignored, in coincident with the tinfoil boundary condition assumed in the u-series method \cite{DEShaw2020JCP}.  

Substituting Eq.~\eqref{eq::GM} into Eq.~\eqref{4.3eq::Et2} and taking the square root, one obtains 
\begin{equation}
\delta E_T\approx Q\sum_{m\neq 0}\mathcal{C}(m)\sqrt{-\frac{\pi r_c^{2\alpha_m}}{(1+2\alpha_m)V}}.
\end{equation}
Since $\mathcal{C}(m)$ decays rapidly with the increase of $|m|$, one can safely truncate $m$ at $\pm 1$ to estimate $\delta E_T$.  Similarly, for a closed formula for $\delta \bm{F}_{T}$, one begins with
\begin{equation}
\bm{F}_{T}(\bm{r}_i)=-\sum_{m\neq 0}\mathcal{C}(m)\left(\alpha_m-1\right) \sum_j{'} q_i q_j\bm{\zeta}_{ij}(m)
\end{equation}
with $\bm{\zeta}_{ij}(m)=r_{ij}^{\alpha_m-3}\bm{r}_{ij}H(r_{ij}-r_c).$
Again, the ideal-gas assumption leads us to
\begin{equation}
\label{Eq::Ft}
\delta \bm{F}_{T}\approx Q\sum_{m\neq 0}\sqrt{\frac{4\pi r_c^{2\alpha_m-1}}{\left(1-2\alpha_m\right)NV}}\left(\alpha_m-1\right)\mathcal{C}(m).
\end{equation}
Note that the integral which leads to Eq.~\eqref{Eq::Ft} is convergent, different from the estimate of $\delta E_T$. One can also  truncate $m$ at $|m|=1$ to estimate the force error.

We now consider the term of $\delta E_{G}^{\text{down}}$ or $\delta \bm{F}_{G,i}^{\text{down}}$. One has
\begin{equation}
\delta E_{G}^{\text{down}}=\frac{1}{2}\sum_{i=1}^N \sum_j{'} q_i q_j\zeta_{ij} 
\end{equation}
with $\zeta_{ij}=H\left(r_{ij}-r_c\right)\sum_{\ell=-\infty}^{-1} w_{\ell} e^{-r_{ij}^2 / s_{\ell}^2}.$
By the ideal-gas assumption (Appendix \ref{Sec::Uncorrelation}), one obtains
\begin{equation} \label{eq::EGdowndeelta}
	\delta E_{G}^{\text{down}}\approx \frac{1}{2} \sum_{\ell=-\infty}^{-1} w_{\ell} \sqrt{\frac{Q^2}{V} \int_{r_c}^{\infty}e^{-2 r^2 / s_{\ell}^2} 4 \pi r^2 d r}.
\end{equation}
The integral in Eq.~\eqref{eq::EGdowndeelta} can be asymptotically approximated by the Laplace method~\cite{murray2012asymptotic} considering that $2r_c/s_{-1}$ is big under Assumption \ref{ass:1}. This allows us to approximate $\delta E_{G}^{\text{down}}$ by
\begin{equation}
\delta E_{G}^{\text{down}} \approx \frac{1}{2}Q\sqrt{\frac{\pi  r_c}{ V}} \sum_{\ell=-\infty}^{-1} w_{\ell} s_{\ell} e^{-r_c^2 / s_{\ell}^2}.
\end{equation}
Similarly, one has the following estimate for the force 
\begin{equation}
\delta \bm{F}_{G}^{\text{down}}\approx Q\sqrt{\frac{\pi  r_c}{ NV}}\sum_{\ell=-\infty}^{-1}\sqrt{\frac{ 4r_c^2+3s_{\ell}^2}{s_{\ell}^2/w_{\ell}^2}}e^{-r_c^2 / s_{\ell}^2}.
\end{equation}

For $E_{G}^{\text{up}}$ and $\bm{F}_{G}^{\text{up}}(\bm{r}_i)$, one should estimate them in the Fourier space since the corresponding summations converge slowly in real space. The analysis in Section \ref{subsec::2} and Appendix~\ref{app::Ferr} shows that
\begin{equation}\label{eq::EGup}
E_{G}^{\text{up}}\approx \frac{\log b}{\sqrt{\pi}}\sum_{\ell=M+1}^{\infty}\sum_{i,j=1}^{N}\frac{q_iq_j}{s_{\ell}}e^{-r_{ij}^2/s_{\ell}^2}+\frac{\sqrt{\pi}\log b}{(b-1)s_{M}}\sum_{i,j=1}^{N}q_iq_jH(r_{ij}-r_{c}),
\end{equation} 
and
\begin{equation}\label{eq::FGupr}
\bm{F}_{G}^{\text{up}}(\bm{r}_i)\approx \frac{2\log b}{\sqrt{\pi}}q_i\sum_{\ell=M+1}^{\infty}\sum_{j=1}^{N}\frac{q_j\bm{r}_{ij}}{s_{\ell}^3}e^{-r_{ij}^2/s_{\ell}^2}.
\end{equation}
Eqs.~\eqref{eq::EGup} and \eqref{eq::FGupr} are accurate, but it is difficult to use in practical simulations due to the expensive 
cost to compute the summation. Since $r_c\ll s_{M+1}$, by the second order Taylor expansion  
\begin{equation}
\sum_{\ell=M+1}^{\infty}\sum_{i,j=1}^{N}\frac{q_iq_j}{s_{\ell}}e^{-r_{ij}^2/s_{\ell}^2}= O(1/s_{M}^3)
\end{equation}
and the charge neutrality condition, one gets
\begin{equation}\label{eq::Egup1.1}
\delta E_{G}^{\text{up}}\approx\frac{\sqrt{\pi}\log b}{(b-1)s_{M}}\sum_{i,j=1}^{N}q_iq_jH(r_{ij}-r_{c}).
\end{equation}
With Eq.~\eqref{eq::Egup1.1}, $\delta E_{G}^{\text{up}}$  can be easily computed with $O(N)$ operations. 
For $\bm{F}_{G}^{\text{up}}$, one uses the ideal-gas assumption to obtain
\begin{equation}
\delta\bm{F}_{G}^{\text{up}}\approx \dfrac{2Q\log b }{\sqrt{\pi NV}s_{M+1}^3}\mathcal{B}(L/2,s_{M+1}),
\end{equation}
where
\begin{equation}
\mathcal{B}(x,y)=\left[\frac{3\sqrt{2\pi^3}}{16}y^5\erf\left(\sqrt{2}x/y\right)-\left(\frac{3}{4}xy^4\pi+x^3y^2\pi\right)e^{-2x^2/y^2}\right]^{1/2}.
\end{equation}

To summarize the above analysis, the errors in energy and force are approximated by
\begin{equation}\label{eq::deltaUerr}
\begin{split}
\delta U_{\text{err}}\approx& Q\left[\sum_{m\neq 0}\mathcal{C}(m)\sqrt{-\frac{\pi r_c^{1+2\alpha_m}}{(1+2\alpha_m)V}}+\sqrt{\frac{\pi r_c}{4 V}} \sum_{\ell=-\infty}^{-1} w_{\ell} s_{\ell} e^{-r_c^2 / s_{\ell}^2}\right]\\
&+\frac{\sqrt{\pi}\log b}{(b-1)s_{M}}\sum_{i,j=1}^{N}G_{ij},
\end{split}
\end{equation}
\begin{equation}\label{eq::deltaFerrfinal}
\resizebox{.95\hsize}{!}{$
\begin{aligned}
\delta F_{\text{err}}
\approx&\frac{Q}{\sqrt{NV}}\left[\sum_{m\neq 0}\sqrt{\frac{4\pi r_c^{2\alpha_m-1}}{\left(1-2\alpha_m\right)}}\left(\alpha_m-1\right)\mathcal{C}(m)+\sum_{\ell=-\infty}^{-1}\sqrt{\frac{\pi (4r_c^3+3r_cs_{\ell}^2)}{s_{\ell}^2/w_{\ell}^2}}e^{-r_c^2 / s_{\ell}^2}\right]\\
&+\dfrac{2Q\log b }{\sqrt{\pi NV }s_{M+1}^3}\mathcal{B}(L/2,s_{M+1}).
\end{aligned}$}
\end{equation}
Note that the first components of both $\delta U_{\text{err}}$ and $\delta F_{\text{err}}$ are independent of the number of truncated Gaussians, $M$, and thus control the final accuracy of the energy and the force, respectively. Accordingly, the precision that grows with $M$ is determined by their second components, decaying with $O(1/s_{M})$ and $O(1/s_{M}^3)$, respectively.
For the parameter optimization, we remark that if the cutoff radius $r_c$ is specified before the simulation, one can use the first components to determine $b$ and  $\sigma$, achieving a priori error level $\varepsilon$. Typically, $\varepsilon=10^{-3}$ and $10^{-4}$ are two widely used choice. Then, one uses the second components to determine the number of truncated Gaussians, $M$, such that the cutoff error is at the level of $\varepsilon$.

\section{Numerical results}
\label{example}
We present numerical results of two benchmark examples to verify our error estimates. All the calculations and simulations were conducted by the self-coded u-series method implemented in a modified version of LAMMPS \cite{thompson2022lammps} (version 23June2022). 

\subsection{Madelung constant of crystalline lattice}\label{subsec::madelung}
The first example is the calculation of the Madelung constant for NaCl
cubic-like crystalline lattice \cite{madelung1918elektrische}. The Madelung constant is used to determine the electrostatic energy of an ion
in a crystal. In the setup, $N=8$ unit source charges are evenly arranged on a lattice grid of edge length $L=40\mathring{A}$, with neighboring
charges having opposite charge amount. The relative error is computed by comparing the numerical results with the exact value of the Madelung constant $-1.74756459463318219$.
The numerical error is estimated by calculating separately the three components of $
U_{\text{err}}=E_{T}+E_{G}^{\text{up}}+E_{G}^{\text{down}}$. 
Since the number of particles $N$ is relatively small, one can compute $E_T$ directly by using Eq.~\eqref{Uerr1} where the integral is evaluated using the adaptive Gauss-Legendre quadrature \cite{shampine2008vectorized}. The other two terms, $E_{G}^{\text{up}}$ and $E_{G}^{\text{down}}$, are estimated using the last formula in Eqs.\eqref{eq::final2} and \eqref{eq::final3}, respectively. Since the minimum distance between two charges is much larger than $r_c$, one has
\begin{equation}\label{eq::EGup1}
E_{G}^{\text{up}}\approx \frac{\sqrt{\pi}Q\log b}{(b-1)s_{M}}
\end{equation}
in this case and $E_{G}^{\text{down}}$ is small enough and can be neglected. It is noted that $E_{G}^{\text{up}}$ given in Eq.~\eqref{eq::EGup1} does not work for small $M$ because of $s_{M}\gg r_c$ in Assumption \ref{ass:1}. As $M\rightarrow 0$, the neighbors within cutoff $r_c$ of each charge is only the charge itself and one has $U_{\text{err}}\rightarrow U$. The error in Madelung constant can be accurately estimated by $\min\{|U_{\text{err}}|,U\}/N$.

Fig.~\ref{fig:L} shows the error convergence with the number of truncated Gaussians, $M$, where $r_c=10\mathring{A}$ is taken and the results
of five different $b$ are displayed. It is
observed that the  convergence rate agrees with the theoretical estimate. To better display the legend, the values of $b$ in the legend of Fig.~\ref{fig:L} will be rounded to up to three decimal places, and the detailed parameter sets are shown in Table \ref{tabl:parameter}.
\\

\begin{figure*}[ht]	
	\centering
	\includegraphics[width=0.7\textwidth]{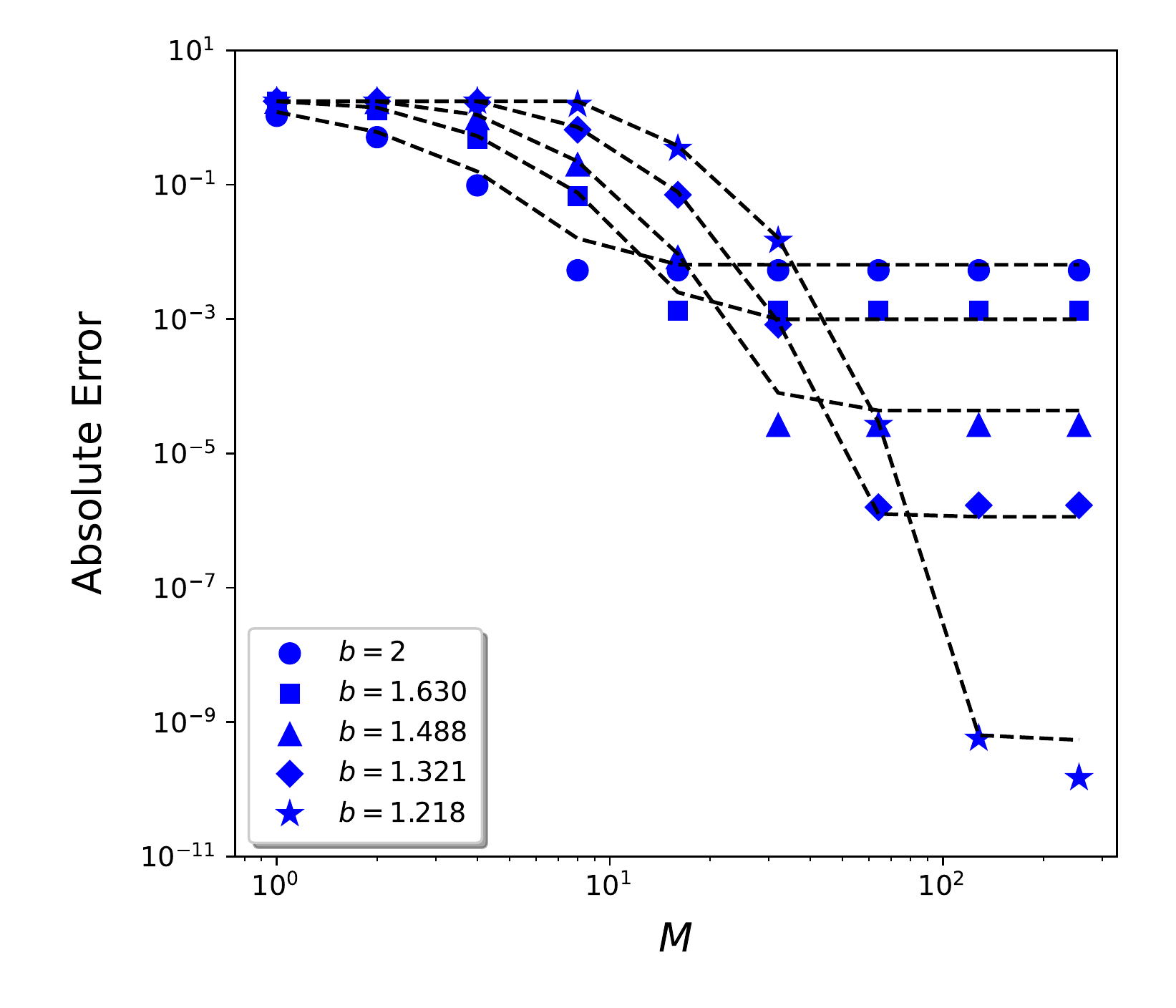}
	\caption{Absolute error as a function of the number of truncated terms, $M$, for different $b$, compared to the exact Madelung constant. The symbols indicate the corresponding errors by the u-series method, and the lines display theoretical estimates in this work.}
	\label{fig:L}
\end{figure*}

\subsection{All-atom water system}
We perform calculations on an all-atom bulk water system using the extended simple point charge (SPC/E) \cite{berendsen1987missing} force field to examine the accuracy of our error estimate, comparing with the reference solutions which are produced by the u-series method with machine precision for given configurations. The system includes $7208$ water molecules confined in a cubic box of initial side length $60\mathring{A}$. We measure the relative error on the energy and force by taking the average of results from $1000$ configurations uniformly sampled from a $100$~$ps$ MD simulation.

\begin{figure*}[!ht]	
	\centering
	\includegraphics[width=0.97\textwidth]{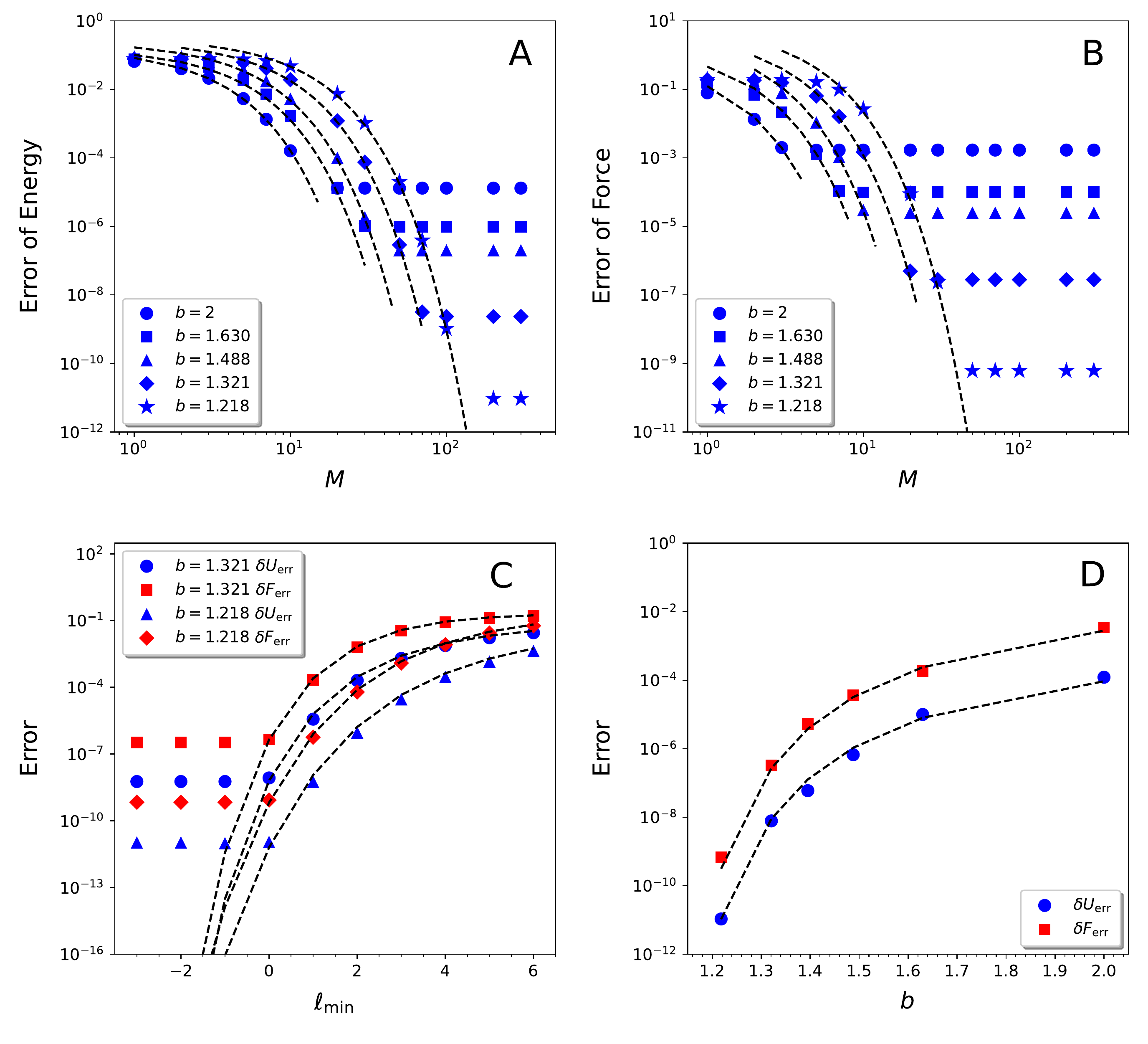}
	\caption{Errors in calculations of an SPC/E pure water system using $C^0$-continuous u-series. Panel (A-B) display  the relative errors of the energy and the force, respectively, as function of $M$. The dashed lines in both panels indicate $O(b^{-M})$ and $O(b^{-3M})$ scaling, respectively. Panel (C) plots the relative error regarding the energy and the force against the minimal index of Gaussians for $b=1.321$ and 1.218. Panel (D) shows the relative errors as function of $b$. The dash lines in (C-D) show the theoretical estimate using Eqs.~\eqref{eq::final3} and \eqref{eq::Uerr1} for energy and  Eqs.~\eqref{eq::force1} and \eqref{app::eqerr1} for force.
	}
	\label{fig:Wa1}
\end{figure*}

We first attempt to demonstrate that the convergence rate given in Theorem \ref{thm::thm4} is accurate. Fig.~\ref{fig:Wa1}(A-B) show the error as a function of the number of truncated terms $M$ for different $b$, where $r_c$ takes $10\mathring{A}$. The dashed lines in (A) and (B) show the theoretical $O(b^{-M})$ and $O(b^{-3M})$ convergence rates, respectively. All the numerical results show well agreement with our theoretical predictions for the error of both the energy and force. These results demonstrate that our estimate of the convergence order of $E_{G}^{\text{up}}$ is accurate. Table \ref{tabl:parameter} shows the detailed parameter sets for SOG approximations. When $M$ is large enough, the error will stop decaying and the convergence accuracy should depend on both $E_{T}$ and $E_{G}^{\text{down}}$.

We examine the existence of $E_{G}^{\text{down}}$. For this purpose, we fix $r_c=10~\mathring{A}$ and $M=200$ in producing Fig.~\ref{fig:Wa1}(C) such that the range of $E_{G}^{\text{up}}$ will not become the leading term in the error. We then tune the minimum index of Gaussians, denoted as $\ell_{\min}$, in the SOG decomposition from $-3$ to $6$. The dashed lines in Fig.~\ref{fig:Wa1}(C) show the theoretical
$O\left(w_{\ell_{\text{min}-1}}\exp({-r_c^2/s_{\ell_{\text{min}}-1}^2})\right)$and  $O\left(w_{\ell_{\text{min}-1}}s_{\ell_{\text{min}}-1}^{-2}\exp(-r_c^2/s_{\ell_{\text{min}}-1}^2)\right)$
scalings, respectively. It is clearly observed that all the numerical results produced by the u-series method show the corresponding scaling with respect to $\ell_{\text{min}}$, in agreement with our estimate of $E_{G}^{\text{down}}$. It is noted that the error stops decaying when $\ell_{\text{min}}$ is less than $0$, because $E_{T}$ will become dominant at this case.
One can also verify the existence of $E_{T}$ in the cutoff error of the u-series method in a similar manner. Here, we fix $r_c=10~\mathring{A}$, $\ell_{\text{min}}=0$, and $M=200$ such that the range of $E_{G}^{\text{up}}$ and $E_{G}^{\text{down}}$ will not affect the error significantly. The numerical results  are displayed in Fig.~\ref{fig:Wa1}(D), where the dashed lines show theoretical  $O\left(e^{-\pi^2/2\log b}(\log b)^{-3/2}\right)$ scalings for both the energy and force. One can clearly observe the correctness of our estimate on the convergence order 
of $E_{T}$. 

\renewcommand\arraystretch{1.4}
\begin{table}[!htbp]
	\caption{Typical parameters and corresponding errors for the $C^0$ u-series method with different $b$ and $\sigma$ at ${r_c}=10\mathring{A}$. $M$ is the minimum number of Gaussian terms required to achieve the error.}
	\centering
 	\scalebox{0.9}{
	\begin{tabular}{c|c|cc|cc}
		\hline \multirow{2}{*}{$b$}& \multirow{2}{*}{$\sigma$} & \multicolumn{2}{c|}{Energy} & \multicolumn{2}{c}{Force} \\
		 \cline{3-6} &  &  Error & $M$ &  Error& $M$ \\
		\hline 2 & $5.027010924194599$ & $1.22\hbox{E}-4$ & $12$ & $3.48\hbox{E}-3$ & $4$ \\
		\hline $1.62976708826776469$ & $3.633717409009413$ & $1.06\hbox{E}-5$ & $25$ & $1.83\hbox{E}-4$ & $7$ \\
		\hline $1.48783512395703226$ & $2.662784519725113$ & $6.69\hbox{E}-7$ & $34$ & $3.68\hbox{E}-5$ & $10$ \\
		\hline $1.39514986274321621$ & $2.577606398612261$ & $5.96\hbox{E}-8$ & $50$ & $5.22\hbox{E}-6$ & $14$\\
		\hline $1.32070036405934420$ & $2.277149356440992$ & $7.80\hbox{E}-9$ & $64$ & $3.27\hbox{E}-7$ & $20$ \\
		\hline $1.21812525709410644$ & $1.774456369233284$ & $1.07\hbox{E}-11$ & $124$ & $6.79\hbox{E}-10$ & $40$ \\
		\hline
	\end{tabular}}
	\label{tabl:parameter}
\end{table}

The results of Fig.~\ref{fig:Wa1} demonstrate the convergence order given in Theorem \ref{thm::thm4}. It is also crucial to compare our closed-form theoretical estimate with the practical results. Different from the Madelung constant calculation, Eqs.~\eqref{Uerr1}, \eqref{eq::final2} and \eqref{eq::final3} cannot be used due to the expensive cost. Instead, we use the closed formulae Eqs.~\eqref{eq::deltaUerr} and \eqref{eq::deltaFerrfinal} to compare with the experimental results. It is observed in Fig.~\ref{fig:Wa5} that the theoretical estimates work well for both the energy and force. This agreement is convincing because the all-atom SPC/E water is a rather typical system and both the experimental and estimated values agree in most of $M$. This wide applicability is much required in the process of determining the optimal parameters to reduce the computational costs. We remark that the small difference between the theoretical estimate and the experimental values in terms of the final convergence accuracy is due to the ideal-gas assumption we made in Section \ref{sec::closed}. Since the hydrogen and oxygen within the same water molecule are chemically bonded together, this assumption leads to error. The numerical results show that the theoretical estimate remain valid in real simulations for this non-simple particle system. 

\begin{figure*}[!ht]	
	\centering
	\includegraphics[width=0.97\textwidth]{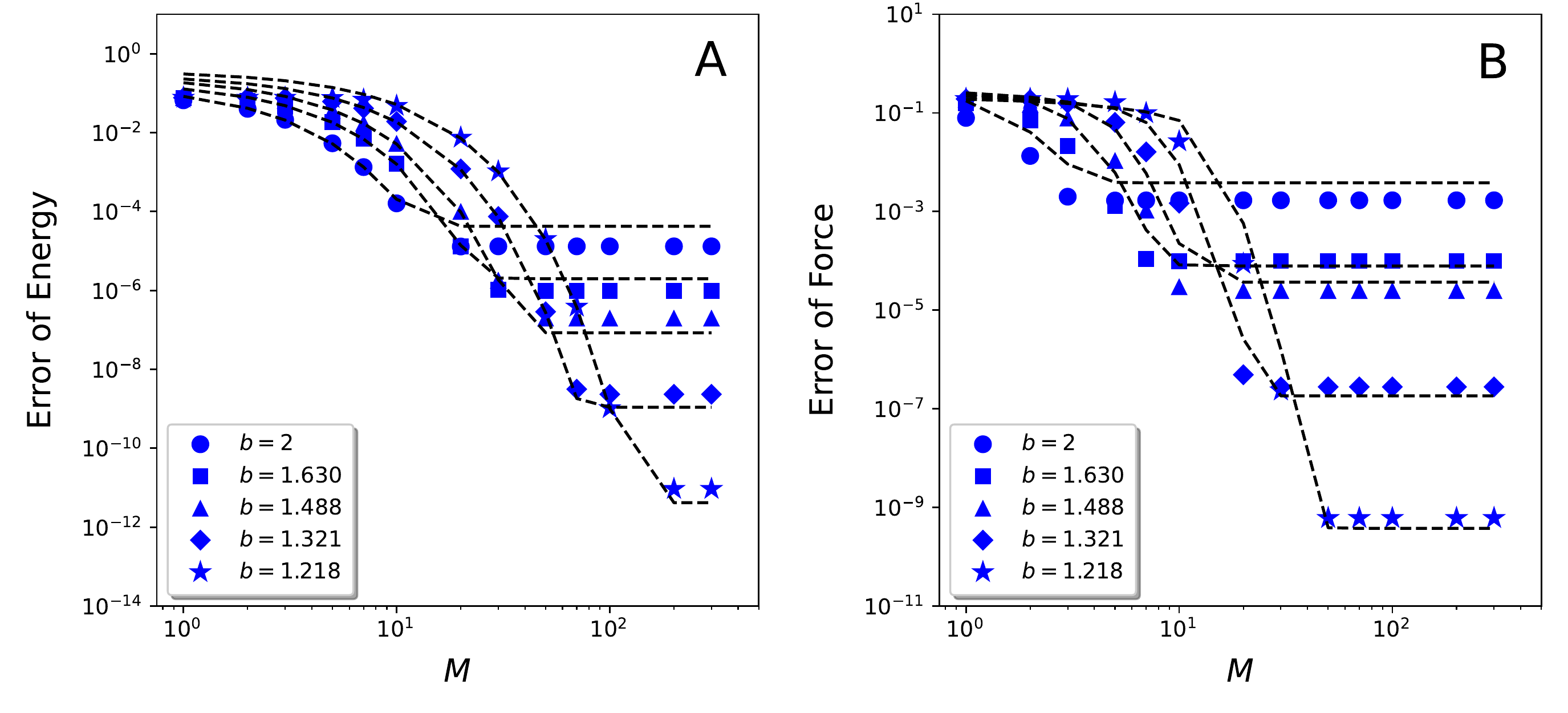}
	\caption{The relative errors of energy and force as a function of the number of truncated terms, $M$, for different $b$ in series of simulations on a SPC/E pure water system. The dashed lines in (A) and (B) plot closed-form theoretical estimates of the energy and force using Eqs.~\eqref{eq::deltaUerr} and \eqref{eq::deltaFerrfinal}.}
	\label{fig:Wa5}
\end{figure*}

Next, we illustrate the convergence rate of $C^1$-continuous u-series given in Theorem~\ref{thm::C1continuous} by numerical results. To explore the existence of $E_G^{\text{down}}$, we set $r_c=10\mathring{A}$ and $M=300$, ensuring that the range of $E_{T}$ and $E_{G}^{\text{up}}$ does not dominate the error. The results are given in Fig.~\ref{fig:Wa6}. Similar to the $C^0$ case, we vary $\ell_{\text{min}}$ from $-3$ to $6$. The dashed lines in Fig.~\ref{fig:Wa6}(A) represent the theoretical scalings for the energy and force. Notably, numerical results agree well with our estimate for $E_{G}^{\text{down}}$ and $\bm{F}_{G}^{\text{down}}$. The limit accuracy of energy and force with increasing $b$ is depicted in Fig.~\ref{fig:Wa6}(B), with detailed parameter sets provided in Table~\ref{tabl:parameterC1}. We observe $O(e^{-\pi^2/2\log b}(\log b)^{-3/2})$ scalings for both energy and force as expected. Comparing Table~\ref{tabl:parameter} with Table~\ref{tabl:parameterC1}, it is evident that the $C^1$-continuous u-series achieves higher accuracy than the $C^0$-continuous case with a similar number of Gaussians. Due to the ensured continuity of forces, this improvement in accuracy is more pronounced in force computation, reaching up to an order of magnitude. These findings suggest that the $C^1$-continuous u-series is more practically useful than the $C^0$-continuous u-series.

\begin{figure*}[t]	
	\centering
	\includegraphics[width=0.97\textwidth]{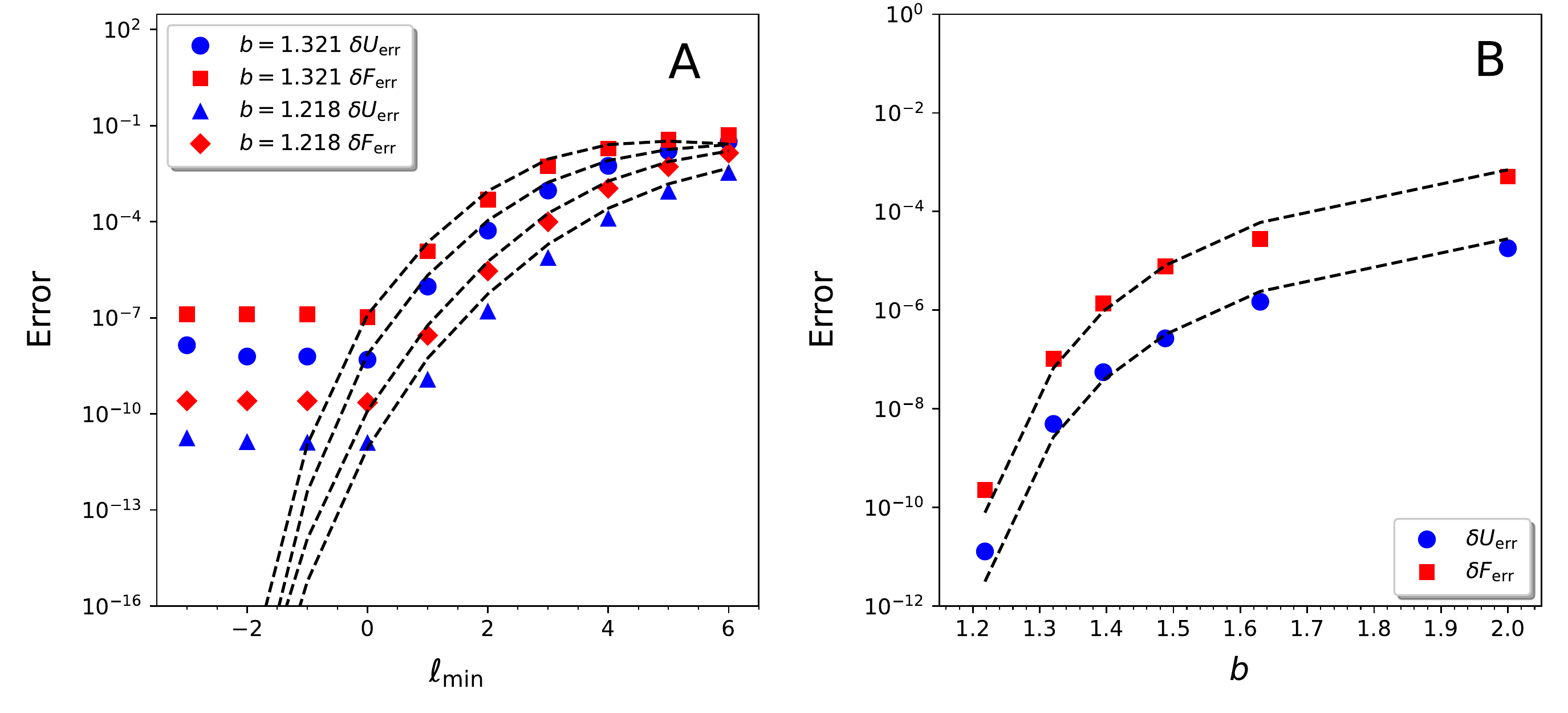}
	\caption{The relative errors of energy and force in an SPC/E pure water system using $C^1$-continuous u-series. Panel (A) reveals the relative error regarding the energy and the force against the minimal index of Gaussians for $b=1.321$ and $b=1.218$. Panel (B) shows the relative errors as function of $b$. The dash lines in (C-D) show the theoretical estimate using Eqs.~\eqref{eq::EgDownC1} and \eqref{eq::Uerr1} for energy and  Eqs.~\eqref{eq::FGDownC1} and \eqref{app::eqerr1} for force.}
	\label{fig:Wa6}
\end{figure*}

\renewcommand\arraystretch{1.4}
\begin{table}[!htbp]
	\caption{Typical parameters and corresponding errors for the $C^1$ u-series method with different $b$ and $\sigma$ at ${r_c}=10\mathring{A}$. $M$ is the minimum number of Gaussian terms required to achieve the error.}
	\centering
	\scalebox{0.9}{
 \begin{tabular}{c|c|c|cc|cc}
		\hline \multirow{2}{*}{$b$}& \multirow{2}{*}{$\sigma$} & 
        \multirow{2}{*}{$\omega$} &
  \multicolumn{2}{c|}{Energy} & \multicolumn{2}{c}{Force} \\
		 \cline{4-7} & & &  Error & $M$ &  Error& $M$ \\
		\hline 2 & $5.027010924194599$ & $0.994446492762232$&$1.80\hbox{E}-5$ & $14$ & $5.13\hbox{E}-4$ & $4$ \\
		\hline $1.62976708826776469$ & $3.633717409009413$ &$1.007806979343806$ & $1.48\hbox{E}-6$ & $26$ & $2.75\hbox{E}-5$ & $7$   \\
		\hline $1.48783512395703226$ & $2.662784519725113$ &$0.991911705759818$ & $2.68\hbox{E}-7$ & $37$ & $7.80\hbox{E}-6$ & $11$   \\
            \hline $1.39514986274321621$ & $2.577606396703941$ &$1.000139836531469$ &  $5.52 \hbox{E}-8$ & $48$ & $1.38\hbox{E}-6$ & $14$   \\
		\hline $1.32070036405934420$ & $2.277149356440992$ &$1.001889141148119$& $4.93\hbox{E}-9$ & $68$ & $1.04\hbox{E}-7$ & $21$   \\
		\hline $1.21812525709410644$ & $1.774456369233284$ &$1.000901461560333$ &  $1.28\hbox{E}-11$ & $126$ & $2.25\hbox{E}-10$ & $41$   \\
		\hline
	\end{tabular}
 }
	\label{tabl:parameterC1}
\end{table}

\section{Conclusion}
\label{conclusion}
In this paper, we present error estimate of the energy and force due to the cutoff of short-range interactions in the u-series method for molecular dynamics, filling in the gap in the theoretical demonstration of the convergence for the method. We also prove the convergence rate with respect to the number of truncated Gaussians, and provide closed-form formulae as approximation of the errors. These results are useful for the parameter setup in simulations using the u-series method, allowing us to find a tradeoff between efficiency and errors in calculations. The accuracy and usefulness of our estimates are examined by conducting  simulations of a NaCl cubic-like crystalline and an SPC/E bulk water system. Extending these theoretical results from fully periodic to partially periodic boundary conditions follows a similar process and will be presented in our subsequent work~\cite{gao2024fast}. Furthermore, we anticipate that the main theorems presented in this paper will inspire further advancements in the convergence analysis of other recently proposed SOG-based fast algorithms~\cite{greengard2023dual} under periodic boundary conditions.

\section*{Acknowledgements}
The authors acknowledge the nancial support from the National Natural Science Foundation of China (grants No.~12071288 and 12325113), Science and Technology Commission of Shanghai Municipality (grant No.~21JC1403700), and the support from the HPC center of Shanghai Jiao Tong University.

\appendix
\section{Proofs of some useful results}
\subsection{Proof of Proposition \ref{prop::prop1}}\label{pf::prop1}
We first need the following lemma.
\begin{lemma}
	\label{lemma::poisson}
	The approximation error between the Coulomb kernel and its bilateral series expansion can be written as the sum of Fourier basis,
	\begin{equation}\label{eq::lemma2}
	\frac{1}{r}-\sum_{\ell=-\infty}^{\infty}w_\ell e^{-r^2/s_\ell^2}=\sum_{m\neq 0}\mathcal{C}(m)r^{\alpha_m-1},
	\end{equation}
	where the definition of $\mathcal{C}(m)$ is given by Eq.~\eqref{eq::Cndef}.
\end{lemma}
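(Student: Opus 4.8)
\textbf{Proof proposal for Lemma~\ref{lemma::poisson}.}
The plan is to recognize the bilateral series $\sum_{\ell\in\mathbb Z}w_\ell e^{-r^2/s_\ell^2}$ as the trapezoidal-rule discretization, with step $h=2\log b$, of the integral representation $1/r=\Gamma(1/2)^{-1}\int_{-\infty}^{\infty}e^{-e^tr^2+t/2}\,dt$ that was written down in Eq.~\eqref{GammaExpansion} with $\beta=1/2$. First I would substitute $t=x_\ell=t_0+\ell h$, with $t_0=-\log(2\sigma^2)$, so that $e^{-e^tr^2+t/2}$ becomes $(2\sigma^2)^{-1/2}b^{-\ell}\exp[-r^2/(2b^{2\ell}\sigma^2)]$; multiplying by $h/\Gamma(1/2)=2\log b/\sqrt\pi$ recovers exactly $w_\ell e^{-r^2/s_\ell^2}$ with the stated $w_\ell$ and $s_\ell$. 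Thus the left-hand side of Eq.~\eqref{eq::lemma2} is precisely the quadrature error of the trapezoidal rule applied to the smooth, rapidly decaying integrand $g(t):=\Gamma(1/2)^{-1}e^{-e^tr^2+t/2}$.

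Next I would invoke the Poisson summation formula in the form that expresses the trapezoidal-rule error as a sum over the nonzero dual frequencies: for a step $h$ and offset $t_0$,
\begin{equation}
h\sum_{\ell\in\mathbb Z}g(t_0+\ell h)-\int_{-\infty}^{\infty}g(t)\,dt=\sum_{m\neq 0}e^{2\pi i m t_0/h}\,\widehat g\!\left(\tfrac{2\pi m}{h}\right),
\end{equation}
where $\widehat g(\xi)=\int g(t)e^{-i\xi t}\,dt$. So the whole task reduces to computing $\widehat g(2\pi m/h)$ explicitly. Writing $\alpha_m=2\pi i m/h=2\pi i m/(2\log b)=m\pi i/\log b$ — note this matches the paper's $\alpha_m$ up to the factor convention, which I would reconcile carefully — the Fourier integral is $\Gamma(1/2)^{-1}\int e^{-e^tr^2+t/2-\alpha_m t}\,dt$. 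Substituting $u=e^tr^2$ turns this into a Gamma-function integral: $\int_0^\infty e^{-u}u^{(1/2-\alpha_m)-1}\,du$ times the appropriate power of $r^2$, giving $\Gamma(1/2)^{-1}\Gamma\big((1-\alpha_m)/2\big)\,(r^2)^{(\alpha_m-1)/2}$ up to the factor $e^{-\alpha_m t_0}=e^{\alpha_m\log(2\sigma^2)}$. Collecting the powers, $(r^2)^{(\alpha_m-1)/2}=r^{\alpha_m-1}$, and bundling the constant $-\pi^{-1/2}\Gamma((1-\alpha_m)/2)e^{-\alpha_m\log\sqrt2\,\sigma}$ (the phase $e^{2\pi i m t_0/h}$ absorbed) yields exactly $\mathcal C(m)$ as in Eq.~\eqref{eq::Cndef}, which completes the identification.

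The main obstacle is justifying the Poisson summation step rigorously: one needs $g$ and $\widehat g$ to decay fast enough for the pointwise identity to hold, and one must check that the resulting series $\sum_{m\neq 0}\mathcal C(m)r^{\alpha_m-1}$ converges. Here $g(t)=\Gamma(1/2)^{-1}e^{-e^tr^2+t/2}$ is Schwartz-class on the right (double-exponential decay as $t\to+\infty$) but only decays like $e^{t/2}$ as $t\to-\infty$, so $g\notin L^1$ near $-\infty$ in a naive sense — however $e^{t/2}$ is integrable at $-\infty$, so $g\in L^1(\mathbb R)$, and its analyticity in a horizontal strip gives exponential decay of $\widehat g$; by Lemma~\ref{lemma:Gamma}, $|\Gamma((1-\alpha_m)/2)|\sim\sqrt2\,e^{-|m|\pi^2/(2\log b)}$, so the series converges geometrically for every fixed $r>0$. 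I would also need to treat the $r$-dependence uniformly enough that the interchange is legitimate; since everything is an absolutely convergent series of continuous functions of $r$ on $(0,\infty)$, this is routine once the decay estimate is in hand. A secondary bookkeeping point is keeping the definition of $\alpha_m$ consistent with the paper's ($\alpha_m=2m\pi i/\log b$): I would double-check the factor of $2$ coming from $h=2\log b$ versus the power $r^{\alpha_m-1}$ versus the Gamma argument $(1-\alpha_m)/2$, since a misplaced factor of two there would propagate into every later proposition.
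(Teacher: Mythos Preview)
Your proposal is correct and follows essentially the same route as the paper: apply the Poisson summation formula to the integrand $f(t)=\pi^{-1/2}e^{-e^tr^2+t/2}$ with nodes $t_0+\ell h$, $h=2\log b$, then evaluate $\widehat f$ by the substitution $u=e^tr^2$ to produce $\Gamma\big(\tfrac12-i\xi\big)\,r^{2i\xi-1}$ and read off $\mathcal C(m)$. The factor-of-two bookkeeping you flagged resolves as follows: with the convention $\widehat g(\xi)=\int g(t)e^{-i\xi t}\,dt$ and $\xi=2\pi m/h=\pi m/\log b$, the power of $r$ that emerges is $2i\xi-1=2m\pi i/\log b-1$, which matches the paper's $\alpha_m-1$ exactly, and the Gamma argument becomes $(1-\alpha_m)/2$; your tentative identification $\alpha_m=m\pi i/\log b$ is simply the wrong label for that quantity. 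Your justification of Poisson summation is actually slightly more careful than the paper's (the paper asserts $f\in\mathcal S(\mathbb R)$, which is true since $f\sim e^{t/2}$ at $-\infty$ and is doubly-exponentially small at $+\infty$), and your appeal to Lemma~\ref{lemma:Gamma} for the geometric convergence of $\sum_{m\neq0}\mathcal C(m)r^{\alpha_m-1}$ is the right way to close the argument.
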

\begin{proof}
	Assume $f(r)\in \mathcal{S}(\mathbb{R})$ with $\mathcal{S}$ the space of Schwartz such that the Fourier transform of $f$ is well defined. The Poisson summation formula \cite{stein2011fourier}
	\begin{equation}\label{eq::Poisson}
	\sum_{m\in\mathbb{Z}}\widetilde{f}\left(\frac{m}{h}\right)e^{2\pi it_0\frac{m}{h}}=h\sum_{m\in\mathbb{Z}}f(t_0+mh)
	\end{equation}
	holds for any initial point $t_0\in \mathbb{R}$ and sample step $h>0$. Recall the integral representation of the Coulomb kernel, $1/r=\int_{\mathbb{R}}f(x)dx$, with
	\begin{equation}\label{eq::A.3}
	f(x)=\frac{1}{\sqrt{\pi}}e^{-r^2e^x+\frac{1}{2}x}.
	\end{equation}
By substituting $t_0=-\log 2\sigma^2$, $h=2\log b$ and Eq.~\eqref{eq::A.3} into Eq.~\eqref{eq::Poisson}, the right-hand side of Eq.~\eqref{eq::Poisson} is nothing but the bilateral series expansion of $1/r$, thus one obtains
	\begin{equation}
	\frac{1}{r}-\sum_{\ell=-\infty}^{\infty}w_\ell e^{-r^2/s_\ell^2}=\widetilde{f}(0)-\sum_{m\in\mathbb{Z}}\widetilde{f}\left(\frac{m}{h}\right)e^{2\pi it_0\frac{m}{h}}		
	\end{equation}
	where the fact $1/r=\widetilde{f}(0)$ is used and the Fourier transform of $f$ is given by
	\begin{equation}
	\widetilde{f}(k)=\frac{1}{\sqrt{\pi}}\int_{\mathbb{R}}e^{-r^2e^x+\frac{1}{2}x}e^{-2\pi ik x}dx=\frac{\Gamma(1/2-2\pi ik)}{\sqrt{\pi}}r^{4\pi ik-1}.
	\end{equation}
	The proof is completed.
\end{proof}
Substituting Lemma~\ref{lemma::poisson} into Eq.~\eqref{eq::3.91} yields the Fourier transform of $T(r)$, that is,
\begin{equation}\label{eq::K1k}
\widetilde{T}(k)=\sum_{m\neq 0}\mathcal{C}(m)\widetilde{\phi}_m(k)
\end{equation}
where $\widetilde{\phi}_m(k)$ is the Fourier transform of
\begin{equation}
\phi_m(r)=r^{\alpha_m-1}H(r-r_c).
\end{equation}
To calculate $\widetilde{\phi}_m(k)$, according to Lemma~\ref{lemma1}, the Fourier transform of $\phi_m(r)$ is given by
\begin{equation}
\widetilde{\phi}_m(k)=\dfrac{4\pi}{k}\int_{r_c}^{\infty}\sin(kr)r^{\alpha_m}dr.
\end{equation}
One can use complex-exponential functions to rewrite it as
\begin{equation}
\widetilde{\phi}_m(k)=\dfrac{2\pi}{ki}\int_{r_c}^{\infty}\left(e^{ikr}-e^{-ikr}\right) r^{\alpha_m}dr.
\end{equation}
By the change of variables, one has
\begin{equation}
\widetilde{\phi}_m(k)=\dfrac{2\pi}{k^2}\left[\int_{-ikr_c}^{\infty}e^{-u}\left(\dfrac{u}{-ik}\right)^{\alpha_m}du+\int_{ikr_c}^{\infty}e^{-u}\left(\dfrac{u}{ik}\right)^{\alpha_m}du\right]
\end{equation}
which is the form of Gamma-type function. By the definitions of the Gamma function and the incomplete Gamma function  
\cite{cuyt2008handbook},
\begin{equation}\label{eq::gamma2}
\Gamma(x):=\int_{0}^{\infty}e^{-t}t^{x-1}dt,\quad\text{and}\quad \gamma(a,x):=\int_{0}^{x}t^{a-1}e^{-t}dt,
\end{equation}
one can rewrite $\widetilde{\phi}_n(k)$ as
\begin{equation}\label{eq::derivation}
\resizebox{1\hsize}{!}{$
	\widetilde{\phi}_m(k)=\dfrac{2\pi}{k^2}\dfrac{1}{(-ki)^{\alpha_m}}\left[\Gamma\left(1+\alpha_m\right)-\gamma\left(1+\alpha_m,-kr_c i\right)\right]+\dfrac{2\pi}{k^2} \dfrac{1}{(ki)^{\alpha_m}}\left[\Gamma\left(1+\alpha_m\right)-\gamma\left(1+\alpha_m,kr_c i\right)\right].
	$}
\end{equation}
To proceed, let us introduce a helpful representation of incomplete Gamma function~\cite{cuyt2008handbook}. 
\begin{lemma}\label{lemma::inGamma}
Let $a$ and $x$ be two complex numbers and $\Real(a)>0$. The incomplete Gamma function has an integral representation,
\begin{equation}\label{eq::gammax}
\gamma(a,x)=x^a\int_{0}^{\infty} e^{-at-xe^{-t}}dt.
\end{equation}
\end{lemma}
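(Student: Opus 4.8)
The plan is to verify the identity directly, by performing the substitution $s=xe^{-t}$ in the integral on the right-hand side and recognizing the result as $x^{-a}\gamma(a,x)$.

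First I would recall the definition $\gamma(a,x)=\int_0^x t^{a-1}e^{-t}\,dt$, understood for complex $x$ (say $x\notin(-\infty,0]$) as an integral along the straight segment from $0$ to $x$ with the principal branch of $t^{a-1}$; the condition $\Real(a)>0$ makes the integrand integrable at the endpoint $t=0$. Next, set $I:=\int_0^\infty e^{-at-xe^{-t}}\,dt$ and substitute $s=xe^{-t}$, so that $t=0$ maps to $s=x$, $t\to\infty$ maps to $s\to 0$, $ds=-s\,dt$, and $e^{-at}=(e^{-t})^a=(s/x)^a$. As $t$ runs over $[0,\infty)$, the point $s=xe^{-t}$ sweeps out exactly the segment from $x$ to $0$, which is the contour appearing in the definition of $\gamma$. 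Carrying out the change of variables gives
\[
I=\int_{x}^{0}\Bigl(\frac{s}{x}\Bigr)^{a}e^{-s}\Bigl(-\frac{ds}{s}\Bigr)=x^{-a}\int_0^x s^{a-1}e^{-s}\,ds=x^{-a}\gamma(a,x),
\]
and multiplying through by $x^a$ yields the asserted formula. Convergence of $I$ as $t\to\infty$ follows from $|e^{-at}|=e^{-\Real(a)t}\to 0$ (here $\Real(a)>0$ is used) together with $e^{-xe^{-t}}\to 1$, so the manipulation is legitimate.

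The only subtle point will be the branch bookkeeping for genuinely complex $x$, i.e.\ justifying that the powers $x^{-a}$ and $(s/x)^a$ are taken with matching branches and that the segment $[0,x]$ is the correct contour. The cleanest way to handle this is by analytic continuation: for $x>0$ real the computation above is an elementary real substitution with no ambiguity, and both sides of the claimed identity are holomorphic in $x$ on the slit plane $\mathbb{C}\setminus(-\infty,0]$ — the left side because $x^a$ and $\gamma(a,x)$ are, and the right side by differentiation under the integral sign, which is justified by the same exponential decay estimate — so equality on the positive real axis forces equality throughout. I expect this continuation argument to be the main (and essentially only) obstacle; the substitution itself is routine.
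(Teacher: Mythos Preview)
Your substitution argument is correct and is the standard way to establish this identity: the change of variable $s=xe^{-t}$ collapses the integral to $x^{-a}\int_0^x s^{a-1}e^{-s}\,ds$, and the analytic-continuation remark properly handles the branch issues for complex $x$. Note, however, that the paper does not give its own proof of this lemma at all; it simply records the representation and cites a handbook reference (Cuyt et al.), treating it as a known identity. So there is no approach to compare against --- your proposal supplies a complete, self-contained verification where the paper only quotes the result.
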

By taking $a=1+2\alpha_m$ and $b=\pm kr_ci$ in Lemma~\ref{lemma::inGamma} and the change of variable $x=kr_ce^{-u}$, one obtains 
\begin{equation}\label{eq::i0}
\widetilde{\phi}_m(k)=\dfrac{4\pi}{k^{2+\alpha_m}}\left[\cosh\left(\frac{m\pi^2}{\log b}\right)\Gamma\left(1+\alpha_m\right)-\int_{0}^{kr_c}\sin(x)x^{\alpha_m}dx\right].
\end{equation}
Combining Eqs.~\eqref{eq::i0} with \eqref{eq::K1k}, we finish the proof of Proposition \ref{prop::prop1}.

\subsection{Proof of Proposition \ref{prop::prop2}}\label{pf::prop2}
\par For $\widetilde{G}_{\text{up}}$, one can write the Fourier transform as
\begin{equation}\label{eq::UerrUp}
\begin{aligned}
\widetilde{G}_{\text{up}}(k)=&\dfrac{4\pi}{k}\left(\int_{0}^{\infty}-\int_{0}^{r_c}\right)\sin(kr)r\left(\sum_{\ell=M+1}^{\infty}w_\ell e^{-r^2/s_\ell^2}\right)dr\\
:=&\widetilde{G}_{\text{up}}^{\alpha}(k)+\widetilde{G}_{\text{up}}^{\beta}(k).
\end{aligned}
\end{equation}
The former term can be analytically calculated,
\begin{equation}
\label{eq::e1up1}
\begin{split}
\widetilde{G}_{\text{up}}^{\alpha}(k)
&=2\pi\log b\sum_{\ell=M+1}^{\infty}s_\ell^2e^{-s_\ell^2k^2/4}
\end{split}
\end{equation}
where one uses the relation $w_\ell s_\ell=2\log b/\sqrt{\pi}$.

We next consider the estimation of $\widetilde{G}_{\text{up}}^{\beta}(k)$. Since the integral domain is $[0,r_c]$ with $r_c\ll s_M$ under Assumption ~\ref{ass:1}, one can use Taylor expansion to give this part a proper approximation. By using
\begin{equation}
e^{-r^2/s_\ell^2}=1+O\left(\frac{r^2}{s_\ell^2}\right), \quad \ell\geq M+1,\quad \text{and} \quad r<r_c
\end{equation}
one has
\begin{equation}\label{eq::e1up2}
\begin{split}
\widetilde{G}_{\text{up}}^{\beta}(k)&=\dfrac{4\pi}{k^2}\sum_{\ell=M+1}^{\infty}\omega_{\ell}\left[\frac{\sin(r_c k)}{k}-r_c\cos(kr_c)+O\left(k\int_{0}^{r_c}\sin(kr) \frac{r^3}{s_\ell^2}dr\right)\right]\\
&=\dfrac{4\pi}{k^2}\left[\frac{2\log b}{(b-1)\sqrt{\pi}s_M}+O\left(\frac{L^2}{s_{M}^3}\right)\right]\mathscr{P}(r_c,k).
\end{split}
\end{equation}
The proof of Proposition~\ref{prop::prop2} is completed by combining Eqs.~\eqref{eq::e1up1} and \eqref{eq::e1up2}.

\subsection{Proof of Proposition \ref{prop::prop3}}\label{pf::prop3}
For $\widetilde{G}_{\text{down}}$, the Fourier transform can be written explicitly,
\begin{equation}\label{eq::3.13}
\begin{split}
\widetilde{G}_{\text{down}}(k)=\dfrac{4\pi}{k}\sum_{\ell=-\infty}^{-1}\left[\frac{w_{\ell}s_{\ell}^2}{2}e^{-r_c^2/s_{\ell}^2}\sin(kr_c)+\frac{\sqrt{\pi}kw_{\ell}s_{\ell}^3}{4}e^{-s_{\ell}^2k^2/4}\Re\left\{\text{erfc}\left(\frac{r_c}{s_{\ell}}-\frac{ks_{\ell}}{2}i\right)\right\}\right],
\end{split}
\end{equation}
where $\Re\{\cdot\}$ denotes taking the real part. For further derivation, we need the following lemma. 
\begin{lemma}\label{lemma::erfc}
	The \emph{erfc}($\cdot$) function has an asymptotic expansion~\cite{abramowitz1964handbook,olver1997asymptotics}
	\begin{equation}
       \label{eq::ErfcExp}
	\emph{erfc}(z)= \frac{e^{-z^2}}{\sqrt{\pi}z}\left[\sum_{m=0}^{\mathcal{M}}(-1)^m\frac{(\frac{1}{2})_m}{z^{2m}}+O\left(z^{-2\mathcal{M}-2}\right)\right],~\text{as}~\Re\{z\}>1, 
	\end{equation}
	where $(\cdot)_{m}$ represents the Pochhammer's symbol.
\end{lemma}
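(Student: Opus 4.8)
The plan is to derive the expansion directly from the integral representation $\erfc(z)=\tfrac{2}{\sqrt{\pi}}\int_{z}^{\infty}e^{-t^2}\,dt$ by repeated integration by parts, and then to control the remainder by a contour deformation adapted to the half-plane $\Re\{z\}>1$. This last point is what makes the error uniform even when $|\Im\{z\}|$ is large, which is precisely the regime needed in the application — there one takes $z=r_c/s_\ell-(k s_\ell/2)i$ with $k$ unbounded, so $\arg z$ approaches $-\pi/2$; the assumption $\Re\{z\}>1$ (equivalently $s_{-1}<r_c$ for $\ell\le-1$) must therefore be used, and a naive Watson-type estimate along the direction of $\Re$ does not suffice.

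First I would write $e^{-t^2}=-\tfrac{1}{2t}\tfrac{d}{dt}e^{-t^2}$ and integrate by parts $\mathcal{M}+2$ times. Each step contributes one boundary term evaluated at $t=z$ and raises the negative power of $t$ in the surviving integral by two, so one reaches
\begin{equation}
\erfc(z)=\frac{e^{-z^2}}{\sqrt{\pi}z}\sum_{m=0}^{\mathcal{M}+1}(-1)^m\frac{(2m-1)!!}{2^m}\frac{1}{z^{2m}}+c_{\mathcal{M}}\int_{z}^{\infty}\frac{e^{-t^2}}{t^{2\mathcal{M}+4}}\,dt,
\end{equation}
where $(2m-1)!!=1\cdot 3\cdots(2m-1)$, $(-1)!!=1$, and $c_{\mathcal{M}}$ is an explicit constant independent of $z$. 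Recognizing the elementary identity $(2m-1)!!/2^m=(1/2)_m$ for the Pochhammer symbol turns the finite sum into $\sum_{m=0}^{\mathcal{M}+1}(-1)^m(1/2)_m z^{-2m}$, i.e.\ the claimed series carried one term beyond $m=\mathcal{M}$.

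Next I would estimate the tail integral. Since $\Re\{z\}>1>0$, the ray $t=z+s$, $s\in[0,\infty)$, is an admissible integration path: Cauchy's theorem lets us push the horizontal ray from $z$ onto it, using that $\Re\{t^2\}=\Re\{z^2\}+2s\Re\{z\}+s^2\to+\infty$ guarantees decay of $e^{-t^2}$. On this ray both $|z+s|\ge|z|$, because $|z+s|^2=|z|^2+2s\Re\{z\}+s^2$, and $|e^{-(z+s)^2}|=|e^{-z^2}|\,e^{-2s\Re\{z\}-s^2}\le|e^{-z^2}|\,e^{-2s}$ hold, whence
\begin{equation}
\left|\int_{z}^{\infty}\frac{e^{-t^2}}{t^{2\mathcal{M}+4}}\,dt\right|\le\frac{|e^{-z^2}|}{|z|^{2\mathcal{M}+4}}\int_{0}^{\infty}e^{-2s}\,ds=\frac{|e^{-z^2}|}{2\,|z|^{2\mathcal{M}+4}}.
\end{equation}
Thus the remainder term is $O\!\big(e^{-z^2}z^{-2\mathcal{M}-4}\big)$; together with the $m=\mathcal{M}+1$ term of the finite sum, which equals $\tfrac{e^{-z^2}}{\sqrt{\pi}z}\cdot O(z^{-2\mathcal{M}-2})$ and (as $|z|>1$) dominates it, this yields exactly the error $\tfrac{e^{-z^2}}{\sqrt{\pi}z}\,O(z^{-2\mathcal{M}-2})$ in the statement.

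The one point requiring care is the uniformity of this $O(z^{-2\mathcal{M}-2})$ error over the whole half-plane $\Re\{z\}>1$ rather than merely over a closed sub-sector of $|\arg z|<\tfrac{3\pi}{4}$: integrating along the direction of $\Re$ would only give a remainder of order $(\Re\{z\})^{-(2\mathcal{M}+4)}$, which is too weak when $|\Im\{z\}|\gg\Re\{z\}$. The resolution is the pair of inequalities above on the translated ray, which keep $|z|$ (not $\Re\{z\}$) in the denominator while still extracting the decay $e^{-2s}$ from $\Re\{z\}\ge1$. I therefore expect this uniformity argument to be the main obstacle to state cleanly; the coefficient bookkeeping and the Pochhammer identity are routine.
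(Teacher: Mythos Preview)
The paper does not prove this lemma at all: it is stated with citations to Abramowitz--Stegun and Olver and then used directly in the next line to expand the $\erfc$ term in $\widetilde{G}_{\text{down}}(k)$. Your repeated integration-by-parts derivation is correct and is essentially the classical argument one finds in those references; the identification $(2m-1)!!/2^m=(1/2)_m$ and the remainder bound via the horizontal ray $t=z+s$ are both right, and your point about needing $|z|$ rather than $\Re\{z\}$ in the denominator (so that the bound is useful when $k$ is large) is exactly the reason the hypothesis $\Re\{z\}>1$ --- i.e.\ $s_{-1}<r_c$ in Assumption~\ref{ass:1} --- is imposed. So you have supplied a complete proof where the paper only cites one; nothing is missing.
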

Since $r_c/s_{\ell}>1$ for all $\ell\leq-1$, one can substitute Lemma~\ref{lemma::erfc} with $\mathcal{M}=0$ into Eq.~\eqref{eq::3.13} and then obtain
\begin{equation}
\widetilde{G}_{\text{down}}(k)=\frac{4\pi}{k^2}\sum_{\ell=-\infty}^{-1}w_{\ell}e^{-r_c^2/s_{\ell}^2}\beta_{\ell}(k),
\end{equation}
%by truncating the series to the leading term, 
where the factor $\beta_{\ell}(k)$ is defined as Eq.~\eqref{eq::2.25}. It can be seen that $|\beta_{\ell}(k)|\leq r_c(1+O((r_c/s_{\ell})^{-2}))$ and $\beta_{\ell}(k)$ vanishes as $k\rightarrow 0$. One then completes the proof of Proposition~\ref{prop::prop3}. 

\section{Discussion on the integral transform \texorpdfstring{Eq.~\eqref{eq::approx}}~}\label{app::discussintegral}
Let us discuss the error involved by utilizing the integral transform given by Eq.~\eqref{eq::approx} in our estimation. In Eq.~\eqref{Uerr1} of Section~\ref{subsec::1}, this approximation is applied to $E_{T,m}(\bm{k})$. We have
\begin{equation}
\begin{split}
&\left|\sum_{\bm{k}\neq \bm{0}}E_{T,m}(\bm{k})-\dfrac{V}{(2\pi)^3}\int_{\frac{2\pi}{L}}^{\infty}k^2dk\int_{-1}^{1}d\cos\theta\int_{0}^{2\pi}d\varphi E_{T,m}(k,\theta,\varphi)\right|\\
\leq&\left|\sum_{\bm{k}\neq \bm{0}}E_{T,m}(\bm{k})-\frac{V}{(2\pi)^3}\int_{\mathbb{R}^3\backslash [-\frac{2\pi}{L},\frac{2\pi}{L}]^3}E_{T,m}(\bm{k})d\bm{k}\right|+\left(8-\frac{4\pi}{3}\right)\max_{\bm{k}\in[-\frac{2\pi}{L},\frac{2\pi}{L}]^3\backslash B_{2\pi/L}}|E_{T,m}(\bm{k})|\\
=&E_{T,m}^{\text{err}}+O\left((\log b)^{-1/2}e^{-\frac{m\pi^2}{2\log b}}V^{-1}\right)
\end{split}
\end{equation}
by the triangle inequality, where $B_{2\pi/L}:=\{\bm{k}|k\leq 2\pi/L\}$ and $E_{T,m}^{\text{err}}$ represents the error of the so-called ``punctured'' trapezoidal rule~\cite{izzo2022convergence}. Refining Theorem 2.1 and Lemma 2.4 in \cite{izzo2022convergence}, we find $E_{T,m}^{\text{err}}=O((\log b)^{-3/2}e^{-\frac{m\pi^2}{2\log b}}L^{-1})$. %This implies that the error introduced by the integral transform will not change the leading order of $E_T$ in Eq.~\eqref{eq::Uerr1} as $b\rightarrow 1$. 
Similarly, we apply the integral transform to $(2V)^{-1}|\rho(\bm{k})|^2\widetilde{G}_{\text{up}}(k)$ and $(2V)^{-1}|\rho(\bm{k})|^2\widetilde{G}_{\text{down}}(k)$ in Eqs.~\eqref{eq::final2} and \eqref{eq::3.35}, respectively. Since 
\begin{equation}
\lim_{\bm{k}\rightarrow\bm{0}}\left|\rho(\bm{k})\right|^2\widetilde{G}_{\text{up}}(k)=\lim_{\bm{k}\rightarrow\bm{0}}\left|\rho(\bm{k})\right|^2\widetilde{G}_{\text{down}}(k)=0
\end{equation}
by the charge neutrality condition, one can add $\bm{k}=\bm{0}$ term in the discrete summation and replace the lower limit of integral from $2\pi/L$ to $0$. Also, by \cite{izzo2022convergence}, applying the integral transform to Eqs.~\eqref{eq::final2} and \eqref{eq::3.35} yields errors of $O(b^{-M}L^{-1})$ and $O(w_{-1}e^{-r_c^2/s_{-1}^2}L^{-1})$, respectively. Comparing these findings with those presented in Theorem~\ref{thm::thm4}, along with simple numerical tests, it is demonstrated that the integral transform maintains the leading decay order as $b\rightarrow 1$, even for relatively small $L$. For a practically used $L$, one can expect that the introduced error is typically one to two orders of magnitude smaller than the overall decomposition error. %~\cite{izzo2023high}.

\section{Estimate of the force error}\label{app::Ferr}
We present the proof of force error $\bm{F}_{\text{err}}(\bm{r}_i)$ stated in Theorem~\ref{thm::thm4} here, which exhibits similarities with the energy estimate. Let us start by writing the Fourier spectral expansion of $\bm{F}_{\text{err}}(\bm{r}_i)$ provided in Eq.~\eqref{eq::UerrFerr} as the sum of three terms,
\begin{equation}\label{eq::Ferr1}
\begin{split}
\bm{F}_{\text{err}}(\bm{r}_i)=&\sum_{\bm{k}\neq \bm{0}}\frac{q_i\bm{k}}{V}\text{Im}\left(e^{-i\bm{k}\cdot\bm{r}_i}\rho(\bm{k})\right)\left[\widetilde{T}(k)+\widetilde{G}_{\text{up}}(k)+\widetilde{G}_{\text{down}}(k)\right]\\
:=&\bm{F}_{T}(\bm{r}_i)+\bm{F}_{G}^{\text{up}}(\bm{r}_i)+\bm{F}_{G}^{\text{down}}(\bm{r}_i).
\end{split}
\end{equation}
%Since all of $\widetilde{T}(k)$, $\widetilde{G}_{\text{up}}(k)$ and $\widetilde{G}_{\text{down}}(k)$ are radially symmetric, the direction of the error vector will be determined by the summation over $\bm{k}$. 
Note that all of $\widetilde{T}(k)$, $\widetilde{G}_{\text{up}}(k)$ and $\widetilde{G}_{\text{down}}(k)$ are radially symmetric, and one has
\begin{equation}
\text{Im}\left(e^{-i\bm{k}\cdot\bm{r}_i}\rho(\bm{k})\right)=\sum_{j=1}^{N}q_{j}\sin(-\bm{k}\cdot\bm{r}_{ij}).
\end{equation}
We use the spherical coordinates $(k,\theta,\varphi)$ such that the $z$-coordinate of $\bm{k}$ is in the direction of the vector $\bm{r}_{ij}$, it
will follow from the symmetry around the $z$-axis that the error of the term associated with $\bm{r}_{ij}$ is parallel to $\bm{r}_{ij}$. As a consequence, it is enough to calculate the  projection of the error of the force on the $z$-axis, i.e. to replace the vector extra $\bm{k}$ in Eq.~\eqref{eq::Ferr1} by $kz$ with $z=\cos\theta$. 

Let us consider the estimate of $\bm{F}_{T}(\bm{r}_i)$. By using Eq.~\eqref{eq::approx} and replacing $\bm{k}$ by $kz$, one has
\begin{equation}\label{eq::Ferr12}
\begin{split}
\bm{F}_{T}(\bm{r}_i)
\simeq \dfrac{1}{4\pi^2}\int_{\frac{2\pi}{L}}^{\infty}k^2\sum_{j=1}^{N}q_iq_j\int_{-1}^{1}kz\sin(-kr_{ij}z)dz\left[\sum_{m\neq 0}\widetilde{\phi}_m(k)\mathcal{C}(m)\right]dk.
\end{split}
\end{equation}
Similar to the case of $E_{T}$, one can divide the infinite integral into two parts,
\begin{equation}\label{eq::Ferr23}
	\bm{F}_{T}(\bm{r}_i)\simeq-\sum_{m\neq 0}\sum_{j=1}^{N}\frac{2\sqrt{2}q_iq_j}{\pi e^{\frac{m\pi^2}{2\log b}}}\left(\int_{\frac{2\pi}{L}}^{\frac{|\alpha_m|}{r_c}}+\int_{\frac{|\alpha_m|}{r_c}}^{\infty}\right)\dfrac{\mathscr{P}(r_{ij},k)}{r_{ij}^2}\left[\mathcal{I}_m-\mathcal{J}_{m}(k)\right]dk:=\bm{F}_{T,\mathcal{E}}+\bm{F}_{T,\mathcal{R}},
\end{equation}
where $\mathscr{P}(\cdot,\cdot)$ is defined as in Eq.~\eqref{eq::Prck}. Similar to Eqs.~\eqref{Uerr1}-~\eqref{eq::3.30}, it can be proved that
\begin{equation}\label{app::eqerr1}
\left|\bm{F}_{T,\mathcal{E}}\right|\simeq O\left((\log b)^{-3/2}e^{-\frac{\pi^2}{2\log b}}\right)\quad\text{and}\quad\left|\bm{F}_{T,\mathcal{R}}\right|\simeq O\left(\log b~e^{-\frac{\pi^2}{2\log b}}\right)\quad \text{as} \quad b\rightarrow 1
\end{equation}
by using the relation below,
\begin{equation}
	\dfrac{\mathscr{P}(r_{ij},k)}{r_{ij}^2}\rightarrow 0 \quad \text{as}\quad i=j\quad\text{or}\quad k\rightarrow0.
\end{equation}
One can obtain $\left|\bm{F}_{T}(\bm{r}_i)\right|\simeq O\left((\log b)^{-3/2}e^{-\pi^2/2\log b}\right)$ as $b\rightarrow 1$.

Next, one can obtain a reasonable estimate of $\bm{F}_{G}^{\text{up}}(\bm{r}_i)$ by finding the gradient of $E_{G}^{\text{up}}$ (using Eq.~\eqref{eq::final2}). Note that $\mathcal{T}(r_{ij},r_c)$ has zero derivative when $r_{ij}\neq r_c$, and has no definition at $r_{ij}=r_c$. One can simply take the continuous approximation and obtain 
\begin{equation}\label{eq::FGup}
\begin{split}
\bm{F}_{G}^{\text{up}}(\bm{r}_i)&\simeq\frac{2\log b}{\sqrt{\pi}}\sum_{j}\sum_{\ell=M+1}^{\infty}\frac{q_iq_j\bm{r}_{ij}}{s_{\ell}^3}e^{-r_{ij}^2/s_{\ell}^2}= O\left(\frac{1}{s_{M}^3}\right).\\
%\bm{F}_{\text{err}}^{2,\text{up}}(\bm{r}_i)&\simeq 4q_i\sum_{j\neq i}q_j\left[\dfrac{2r_{ij}}{\sqrt{\pi}s_{M}}e^{-r_{ij}^2/s_{M}^2}-\erf\left(\frac{r_{ij}}{s_M}\right)\right]\dfrac{\bm{r}_{ij}}{r_{ij}^3}\\
\end{split}
\end{equation}
%and
%\begin{equation}
%	\begin{split}
% 		\bm{F}_{\text{err}}^{2,\text{down}}(\bm{r}_i)&\simeq-\dfrac{r_c}{2\pi}\left(\sum_{\ell=-\infty}^{-1}\omega_{\ell}e^{-r_c^2/s_{\ell}^2}\right)\sum_{j\neq i}q_iq_j\dfrac{\bm{r}_{ij}}{r_{ij}^3}H(r_{ij}-r_c)\\
% 		&=O\left(e^{-r_c^2/s_{-1}^2}\right).
% 	\end{split}
% \end{equation}

Analogously, one can find
\begin{equation}\label{eq::AppC1}
	\bm{F}_{G}^{\text{down}}(\bm{r}_i)\simeq \dfrac{r_c}{4\pi }\sum_{\ell=-\infty}^{-1}\left[w_{\ell}e^{-r_c^2/s_{\ell}^2}\sum_{i,j}\frac{q_iq_j\bm{r}_{ij}}{r_{ij}^3}\left(1+\frac{2r_cr_{ij}}{s_{\ell}^2}\right)e^{-2r_c(r_{ij}-r_c)/s_{\ell}^2}H(r_{ij}-r_c)\right]
\end{equation}
by taking the negative gradient of Eq.~\eqref{eq::final3}. Since $r_c>s_{\ell}$ for all $\ell\leq-1$, it is naturally supposed that $2r_cr_{ij}/s_{\ell}^2\gg 1$ for $r_{ij}>r_c$. Together with $b\rightarrow 1$, one has 
\begin{equation}
\label{eq::force1}
	\left|\bm{F}_{G}^{\text{down}}(\bm{r}_i)\right|\simeq O\left(w_{-1}(s_{-1})^{-2}e^{-r_c^2/s_{-1}^2}\right).
\end{equation}
Finally, summarizing, one has the following estimate for the truncation error of force,
\begin{equation}
|\bm{F}_{\text{err}}(\bm{r}_i)|\simeq O\left((\log b)^{-3/2}e^{-\pi^2/2\log b}+b^{-3M}+w_{-1}(s_{-1})^{-2}e^{-r_c^2/s_{-1}^2}\right).
\end{equation}

\section{Ideal-gas assumption}
\label{Sec::Uncorrelation}
Let $\bm{S}$ be a statistic quantity  in an interacting particle and one attempts to analyze its root mean square value 
\begin{equation}
	\delta S:=\sqrt{\frac{1}{N}\sum_{i=1}^{N} \bm{S}_i^2}
\end{equation}
where $\bm{S}_i$ denotes the quantity acting on particle $i$ (energy for one dimension or force for three dimensions). Suppose that $\bm{S}_i$ has the following form 
\begin{equation}
   \bm{S}_i=q_i\sum_{j\neq i}q_j\bm{\zeta}_{ij}
\end{equation} 
due to the superposition principle of particle interactions, which indicates that the total effect on particle $i$ can be expressed as the sum of contributions for each $ij$ pair (including periodic images). Here $\bm{\zeta}_{ij}$ denotes the interaction between two particles. The ideal-gas assumption gives the following relation 
\begin{equation}
	\langle\bm{\zeta}_{ij}\bm{\zeta}_{ik}\rangle=\delta_{jk}\langle\bm{\zeta}_{ij}^2\rangle:=\delta_{jk}\zeta^2,
\end{equation}
where the expectation is taken over all particle configurations and $\zeta$ is  constant. The assumption describes that, every two different particle pairs are uncorrelated, and the variance of each pair is expected to be uniform. In computing the force variance of a charged system,  this assumption implies that
\begin{equation}
   \langle(\bm{S}_i)^2\rangle=q_i^2\sum_{j,k\neq i}q_jq_k\langle\bm{\zeta}_{ij}\bm{\zeta}_{ik}\rangle\approx q_i^2\zeta^2Q,
\end{equation}
where $Q=\sum_{i=1}^{N}q_i^2$. By the law of large numbers, one has $\delta S\approx \zeta Q/\sqrt{N},$ which can be used for the mean-field estimate of the force variance.

\end{document}